\newtheorem{theorem}{Theorem}
\newtheorem*{acknowledgement*}{Acknowledgement}
\newtheorem{corollary}[theorem]{Corollary}
\newtheorem{lemma}[theorem]{Lemma}
\newtheorem{proposition}[theorem]{Proposition}
\newtheorem{remark}[theorem]{Remark}
\def\F{\mathcal F}
\def\A{\mathcal A}
\def\R{\mathbb{R}}
\def\N{\mathbb{N}}
\def\Pr{\mathbb{P}}
\renewcommand{\phi}{\varphi} 
\DeclareMathOperator{\E}{\mathbb E}
\newcommand{\kom}[1]{}
\renewcommand{\kom}[1]{{\bf [#1]}}
\newcommand{\ubar}[1]{\underaccent{\bar}{#1}}
\newcounter{komcounter}
\numberwithin{komcounter}{section}
\renewcommand{\hat}{\widehat} 
\renewcommand{\epsilon}{\varepsilon} 
\renewcommand{\ubar}{\underline}
\renewcommand{\bar}{\overline}
\numberwithin{equation}{section}
\numberwithin{theorem}{section}
\title{Outrunning the Omega Clock: A Singular Control Problem for Dividend Optimisation with Ruin and Time-in-Distress Default}
\author{
\\ Andi Bodnariu \\Department of Mathematics, Stockholm University\\
\\Nils Engler\\Department of Mathematics, Stockholm University\\
\\Neofytos Rodosthenous\\Department of Mathematics, University College London
}
\begin{document}

\maketitle

\begin{abstract} 
This paper extends the classical dividend problem by incorporating a novel, path-dependent mechanism of firm default. In the traditional framework, ruin occurs when the surplus process first reaches zero. In contrast, default in our model may also arise when the surplus spends an excessive amount of time below a distress threshold, even without ever hitting zero. This occupation-time-based default criterion captures financial distress more realistically, as prolonged periods of low liquidity or capitalisation may trigger regulatory intervention or operational failure.
The resulting optimisation problem is formulated as a new singular stochastic control problem with discontinuous state-dependent discounting and killing. 
We provide a complete analytical solution via a bespoke sequential guess-and-verify method and identify three distinct classes of optimal dividend strategies corresponding to different parameter regimes of the dual-ruin structure. Notably, for certain distress thresholds, the optimal policy features disconnected action and inaction regions.
We further show that, unlike in the classical dividend problem, higher effective discounting induced by occupation time below a distress level can lead to delayed, rather than earlier, dividend payments.
\end{abstract} 
\begin{adjustwidth}{2.5em}{2.5em}

\smallskip

{\textbf{Keywords}}: singular stochastic control; optimal dividends; ruin theory; occupation time; omega clock; random discount rate, free boundary

\smallskip
\noindent
{\textbf{MSC2020 subject classification}}: 93E20; 60J60; 49L12; 91B70 
\end{adjustwidth}

\section{Introduction}

The classical dividend problem is a cornerstone of actuarial science and financial risk theory, addressing the optimal strategy for distributing dividends from a firm’s surplus while balancing profitability against the risk of ruin. 
In its most basic form, the surplus is modelled as a stochastic process -- typically a Brownian motion with drift or a compound Poisson process -- and ruin is defined as the first hitting time of zero. 
The problem was first introduced by de Finetti \cite{de1957impostazione}, who studied the maximisation of the expected cumulative discounted dividends paid until ruin. 
Since then, it has been extensively investigated within the framework of stochastic singular control, leading to a rich and well-established theory. 
Early contributions include \cite{shreve1984optimal}, which formulates the problem as a singular control problem, and \cite{jeanblanc1995optimization}, which allows for both continuous dividend rates and lump-sum payments. 
For comprehensive surveys, we refer the reader to \cite{albrecher2009optimality} and \cite{schmidli2007stochastic}.

However, real-world corporate default rarely manifests as an abrupt event occurring precisely at zero capital. 
Regulatory pressure, liquidity stress, and the erosion of market confidence often force firms into default or restructuring while they remain technically solvent. 
In practice, a prolonged stay in a low-surplus region -- even if not resulting in classical ruin -- can severely impair operations or trigger pre-emptive regulatory or managerial intervention. 
Motivated by these considerations, we propose a novel extension of the classical dividend optimisation problem in which the surplus process is subject to two distinct modes of default: traditional ruin at zero and an additional mechanism based on the cumulative occupation time spent in a low-surplus region, leading to what we term {\it occupation-time-induced default}.

The firm’s surplus process $X^D$ is modelled as a controlled Brownian motion with drift,
$X^D_t = x + \mu t + \sigma W_t - D_t$ for all $t \geq 0$, where $D$ denotes the cumulative dividend process. 
To incorporate the additional default mechanism described above, we introduce an omega-clock framework, under which the firm may default if the cumulative time spent by the surplus in a low-surplus region exceeds an independent exponentially distributed random time. 
Specifically, we define the distress region as the interval $[0,y]$ representing the low-surplus zone below the distress threshold $y>0$, where the firm is considered to be in financial stress, in contrast to the operationally safe region above $y$. 
The omega clock is then given by the occupation time
\begin{align} \label{omega_clock}
\omega_t^y = q \int_0^t I_{\{X_s^D<y\}}ds, \quad t \geq 0,
\end{align}
where $q>0$ governs the rate at which time spent in distress accumulates toward a potential default, which occurs when $\omega_t^y$ exceeds an exponentially distributed random threshold. 
This construction introduces a path-dependent stochastic killing mechanism that captures persistence-based financial distress. 
Consequently, management faces the dual challenge of optimising dividend payments while avoiding both instantaneous ruin at zero and accumulated distress-induced default. 
The resulting optimisation problem is formulated as a singular stochastic control problem with a random, path-dependent horizon determined jointly by classical ruin and the omega-clock mechanism.

The introduction of this occupation-time-based default mechanism within the dividend optimisation framework -- bridging instantaneous ruin and prolonged underperformance -- constitutes the first contribution of this paper. 
To the best of our knowledge, existing dividend problems with ruin at zero and random time horizons are restricted to settings in which the horizon is independent of the surplus process (see, e.g.~\cite{doi:10.1142/9789814383318_0007}, \cite{zhao2015optimal}). 
Moreover, we show that the problem can be equivalently reformulated as a singular stochastic control problem with a discontinuous, stochastic, state-dependent instantaneous discount rate, which is also novel in this context. 
Our second main contribution is the derivation of closed-form solutions exhibiting a surprisingly rich qualitative structure, with optimal dividend strategies changing markedly across different distress threshold regimes. 
These results provide a tractable framework for further theoretical developments and offer practical insight into dividend strategy design under time-based regulatory or financial pressure, with potential applications in finance and insurance.

The use of occupation times in stochastic models dates back to \cite{chesney1997brownian}, which introduced Parisian barrier options. 
Since then, occupation times and Parisian-type ruin -- where ruin occurs if the surplus process remains below a given level for a sufficiently long excursion -- have also been studied in the context of dividend optimisation (see, e.g.~\cite{czarna2014dividend}, \cite{xu2022optimal}, \cite{wang2024optimality}). 
A key distinction between Parisian ruin and the present framework lies in the memory structure of the associated timer: 
under Parisian ruin, the clock is reset whenever the surplus recovers above the threshold, even if only briefly, whereas the omega clock \eqref{omega_clock} employed here accumulates the total time spent in distress, without erasing past distress periods after short recoveries. 
An early contribution using the omega-clock mechanism is \cite{albrecher2011optimal}, which considers the special case of a distress threshold $y=0$ and excludes immediate killing at zero. 
In contrast, we allow for general distress thresholds $y\in[0,\infty)$ -- under which the optimal strategy is shown to change substantially -- and incorporate classical ruin at zero as an additional killing mechanism. 
A related occupation-time-based killing mechanism induced by an omega clock has also been studied in the context of optimal stopping problems in \cite{neo_omega2018}.

Our problem is also related to surplus-based models inspired by Chapter~7 and Chapter~11 proceedings under the U.S. Bankruptcy Code. 
An early contribution in this direction is \cite{Chapter11_early}, which proposes a framework in which firms transition between liquid and distressed regimes and may face liquidation or reorganisation depending on both surplus levels and the persistence of financial distress. 
Such models can be characterised by a triplet of thresholds $a<b<c$: 
when the surplus falls below $b$, the firm enters a distress regime; it may undergo immediate liquidation if the surplus drops below $a$; default may occur if the firm remains in distress for a sufficiently long time; or the firm may return to a liquid phase if the surplus exceeds $c$. 
A first attempt to study optimal dividend policies in this spirit is given in \cite{wang2024optimality}, which focuses on Chapter~11–type default, triggered by prolonged distress but excluding immediate liquidation (i.e.~$a=-\infty$). 
In contrast, our model incorporates classical ruin at zero (analogous to Chapter~7) as an additional default mechanism alongside the occupation-time-based default.

From a methodological perspective, we aim to solve the associated singular stochastic control problem with discontinuous, stochastic, state-dependent instantaneous discount rate. 
A special case of discontinuous stochastic discounting was considered in \cite{albrecher2011optimal}, corresponding to a distress level $y=0$ and without immediate ruin at zero. 
In that setting, the standard guess-and-verify methodology suffices, yielding the classical solution of reflecting the surplus downward at a single upper boundary. 
In contrast, we show that this approach cannot provide a solution for general distress levels $y>0$. 
Moreover, due to the discontinuity of the discount rate, the standard connection between singular control and optimal stopping problems (see, e.g.~\cite{karatzas1984connections}, \cite{karatzas1985}) does not hold, precluding its use for solving the problem. 
As a result, we develop a different approach, demonstrating that, for certain parameter regimes, the optimal control exhibits two disjoint inaction (waiting) regions and two disjoint action (dividend-paying) regions, separated by three free boundaries. 
Consequently, the optimal policy may involve, in addition to the standard initial lump-sum dividend, a subsequent lump-sum payment when the surplus enters a lower action region from above.

Multi-barrier strategies have previously appeared in the Cramér–Lundberg model (see, e.g.~\cite{albrecher2023optimal}, \cite{azcue2005optimal}) and in singular control problems such as \cite{junca2024optimal} (state-dependent rewards with constant discounting) and \cite{bai2012non} (dividends with proportional and transaction costs, leading to impulse-type controls). 
To the best of our knowledge, the occurrence of multi-barrier strategies in the context of stochastic discounting or random time horizons -- including models where the discounting is an exogenous stochastic process (see, e.g.~\cite{AKYILDIRIM201493}, \cite{https://doi.org/10.1111/mafi.12339}) -- has not been reported previously. 
This makes our work the first to identify such a solution structure in this setting.

This structural richness highlights the nuanced effects of occupation-time-based risk, which cannot be captured by simpler models of ruin or path-independent discounting. 
A key feature of our model is that the solution to the optimal control problem depends critically on the distress threshold $y$, which partitions the surplus state space into a distress region $[0,y]$ and a no-distress region $(y,\infty)$. 
We identify three qualitatively distinct regimes based on the size of the distress region; see Figure \ref{fig:1} for a numerical illustration.
In the {\it subcritical regime} (small $y$), distress is triggered only at very low surplus levels, and the optimal dividend strategy takes the form of a classical Skorokhod reflection at a boundary that depends on $y$. 
In the {\it supercritical regime} (large $y$), the firm behaves as if it is effectively always under distress, and the optimal strategy coincides with that of the fully penalised case (i.e.~$y=\infty$), becoming independent of $y$. 
The most interesting and novel behaviour arises in the {\it critical regime} (intermediate $y$), where the optimal strategy exhibits a genuinely new structure, with disconnected action and inaction regions emerging from the interaction between occupation-time penalisation, ruin at zero, and surplus dynamics. 
This regime represents a qualitative departure from classical results in singular control and risk theory.

The paper is organised as follows. 
Section \ref{sec_math_model} introduces the model and formal problem setup, and presents a general verification theorem applicable for all subsequent regimes. 
In Section \ref{chapter_classical_case}, we recall the well known results of the classical dividend problem with ruin at zero surplus, and in Section \ref{sec:bounds} we obtain robust bounds for the value function. 
In Section \ref{sec:results} we present the solution to the problem considering three distinct parameter regimes. 
In particular, in Sections \ref{sec:supercritical}--\ref{sec_subcritical}, we consider the supercritical (resp., subcritical) regime with high (resp., low) distress level $y$, resulting in an optimal control policy given by Skorokhod reflection. 
In Section \ref{sec_critical_regime}, we consider the critical regime  with intermediate distress levels $y$, where the optimal policy is characterised by two disjoint payout regions, and two disjoint waiting regions.
The construction of optimal controls, omitted technical proofs, and auxiliary results can be found in Apendices \ref{sec:construction}--\ref{sec:auxiliary}.

\section{Mathematical Model \& Preliminaries}
\label{sec_math_model}

Let  $(\Omega, \F, \left({\cal F}_t\right)_{t\geq 0}, \mathbb{P})$ be a filtered probability space satisfying the usual conditions, supporting a standard one-dimensional Brownian motion $W=\left(W_t\right)_{t\geq0}$, whose augmented natural filtration is $\left({\cal F}_t\right)_{t\geq 0}$
and an independent exponential random variable $e_1$ with unit mean (in particular, independent of ${\cal F}_\infty \subset \mathcal{F}$).

We define the uncontrolled surplus process $X^0=(X_t^0)_{t\geq 0}$ (without any interventions) by the following Brownian motion with drift: 
\begin{align} \label{uncontrol_SDE}
dX_t^0=\mu dt+ \sigma dW_t, \quad t\geq 0, \quad X^0_{0-} = x\geq 0, 
\end{align}
where $\mu>0$ represents the positive drift of the firm's surplus, $\sigma>0$ represents its volatility coefficient and $x>0$ is the initial positive surplus level of the firm. 
We denote by $\mathcal{L}$ the infinitesimal generator of the uncontrolled process $X^0$ satisfying \eqref{uncontrol_SDE}, which is defined at least for functions $f \in {\cal C}^2(\R)$ by
\begin{align*}
\mathcal{L} f(x) := \mu f'(x) + \frac{1}{2} \sigma^2 f''(x), 
\quad \text{for } x \in \R.
\end{align*}

The corresponding controlled surplus process $X^D=(X^D_t)_{t\geq 0}$ is given by 
\begin{align} \label{control_SDE} 
dX_t^D = \mu dt+ \sigma dW_t -dD_t, \quad t\geq 0, \quad X^D_{0-}=x\geq 0, 
\end{align}
where the control process $(D_t)_{t\geq0}$ represents the cumulative amount of dividends paid from time $0$ up to time $t$ and belongs to the set of admissible control processes $\mathcal{A}$ defined by 
\begin{align} \label{adm}
\begin{split}
\mathcal{A} := \{ (D_t(\omega))_{t\geq0} &\text{ is increasing, right-continuous, $\left({\cal F}_t\right)$-adapted,}\\
&\; \text{ with $D_{0-}=0$, such that  $D_{t}-D_{t-} \leq X_{t-}^D$, $\mathbb{P}$-a.s., $\forall$ $t \geq 0$\}}.
\end{split}
\end{align}

The condition that dividend strategies are increasing and adapted means that paid-out dividends cannot be returned and the dividend strategy can be constructed using only the uncontrolled paths given by $X^0$. 
Furthermore, the condition $D_{t}-D_{t-} \leq X_{t-}^D$ ensures that we cannot pay out more dividends than the surplus level, resulting in negative surplus.
For any fixed $y \in [0,\infty)$, we are interested in the optimal dividend problem corresponding to 
\begin{align}\label{exp_reward_value}
V(x,y) := \sup_{D\in\mathcal{A}} J(x;y,D), 
\quad \text{where} \quad 
J(x;y,D) := \E_x\bigg[\int_0^{\tau^D_0}e^{-rt}I_{\{\omega^y_t<e_1\}}dD_t\bigg], 
\quad x \in \R,
\end{align}
where $\omega^y$ is the omega clock defined by \eqref{omega_clock}, $I$ is the indicator function, the traditional time of ruin $\tau^D_0$ at zero surplus is defined by 
$$
\tau^D_0 := \inf\big\{t\geq 0 : X^D_t\leq 0 \big\}
$$ 
and $\mathcal{A}$ is the set of all admissible controls defined by \eqref{adm}.
Henceforth we will refer to $V(\cdot;y)$ as the value function and $J(\cdot;y,D)$ as the expected reward under the dividend policy $D$ and the distress threshold $y \geq 0$. 

Before proceeding to the analysis of the problem \eqref{exp_reward_value} with random time-horizon modelled by the omega clock, we note that it can be formulated into a problem with path-dependent discounting, given in terms of a state-dependent discount rate. 
This is presented in the following result. 

\begin{lemma}
\label{discount_chance_lemma}
Suppose that $y \in [0,\infty)$ and $J(\cdot;y,D)$ is the expected reward defined in \eqref{exp_reward_value} for any $D \in \mathcal{A}$. 
Then, we have 
\begin{align}
\label{expected_reward_discount}
J(x;y,D)=\E_x\bigg[\int_0^{\tau^D_0}e^{-rt-\omega^y_t}dD_t\bigg], 
\quad x \in \R.
\end{align}
\end{lemma}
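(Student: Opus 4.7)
The plan is to exploit the independence of $e_1$ from $\mathcal{F}_\infty$ and the memoryless structure of the exponential distribution via a conditioning argument. The key observation is that the controlled surplus $X^D$, the dividend process $D$, the ruin time $\tau^D_0$ and the occupation time $\omega^y_t$ are all $\mathcal{F}_\infty$-measurable, whereas the random threshold $e_1$ is independent of $\mathcal{F}_\infty$. Hence, conditioning on $\mathcal{F}_\infty$ reduces the stochasticity introduced by $e_1$ to a deterministic calculation.

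Concretely, I would first apply the tower property to write
\begin{align*}
J(x;y,D) = \E_x\!\left[\,\E_x\!\left[\int_0^{\tau^D_0} e^{-rt} I_{\{\omega^y_t<e_1\}}\,dD_t \,\bigg|\, \mathcal{F}_\infty\right]\right].
\end{align*}
Given $\mathcal{F}_\infty$, the random measure $dD_t$, the upper limit $\tau^D_0$, the discount factor $e^{-rt}$ and the clock $\omega^y_t$ are all deterministic, so I can pass the conditional expectation under the integral by a conditional Fubini/Tonelli argument (the integrand is non-negative and jointly measurable, which is what makes this step legitimate without any integrability assumption). This yields
\begin{align*}
\E_x\!\left[\int_0^{\tau^D_0} e^{-rt} I_{\{\omega^y_t<e_1\}}\,dD_t \,\bigg|\, \mathcal{F}_\infty\right] = \int_0^{\tau^D_0} e^{-rt}\, \mathbb{P}\!\left(e_1 > \omega^y_t \,\big|\, \mathcal{F}_\infty\right) dD_t.
\end{align*}

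Next, since $e_1$ is independent of $\mathcal{F}_\infty$ and exponentially distributed with unit mean, and since $\omega^y_t \geq 0$ is $\mathcal{F}_\infty$-measurable,
\begin{align*}
\mathbb{P}\!\left(e_1 > \omega^y_t \,\big|\, \mathcal{F}_\infty\right) = e^{-\omega^y_t}, \quad \mathbb{P}\text{-a.s.}
\end{align*}
Substituting back into the inner conditional expectation and then taking the outer expectation $\E_x$ yields the claimed identity \eqref{expected_reward_discount}.

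The only technical subtlety is the conditional Fubini step, because $D$ may have jumps and even a singular continuous part, so $dD_t$ is not an absolutely continuous measure. However, treating $dD_t(\omega)$ path-wise as a random Borel measure on $[0,\infty)$ and using non-negativity of the integrand, Tonelli's theorem applied to the product space $\Omega \times [0,\infty)$ (with respect to $\mathbb{P}(d\omega \mid \mathcal{F}_\infty) \otimes dD_\cdot(\omega)$) handles the interchange without difficulty. This is the only step requiring care; the rest of the proof is a direct calculation.
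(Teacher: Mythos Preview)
Your proof is correct and follows essentially the same route as the paper's own argument: tower property to condition on $\mathcal{F}_\infty$, a Fubini/Tonelli step to bring the conditional expectation inside the integral, and then the independence of $e_1$ from $\mathcal{F}_\infty$ together with its unit-mean exponential distribution to replace the indicator by $e^{-\omega^y_t}$. Your write-up is in fact slightly more careful than the paper's, since you explicitly justify the interchange via non-negativity of the integrand and the path-wise random-measure structure of $dD_t$.
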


\begin{proof}
For any fixed $D \in \mathcal{A}$, we can use the tower property, Fubini theorem and the independence of $e_1$ and $X$ (also $\mathcal{F}_{\infty}$), to obtain from \eqref{exp_reward_value} that
\begin{align*}
J(x;y,D) 
= \E_x\bigg[ \int_0^{\tau^D_0}e^{-rt}I_{\{\omega^y_t<e_1\}}dD_t \bigg] 
&= \E_x\bigg[ \E_x\bigg[ \int_0^{\tau^D_0} e^{-rt} I_{\{\omega^y_t<e_1\}} dD_t \bigg| \mathcal{F}_{\infty} \bigg] \bigg] \\
&= \E_x\bigg[ \int_0^{\tau^D_0} e^{-rt} \Pr_x \left( \omega^y_t<e_1 | \mathcal{F}_{\infty} \right) dD_t \bigg] 
= \E_x\bigg[ \int_0^{\tau^D_0} e^{-rt-\omega^y_t} dD_t \bigg],
\end{align*}
for all $x \in \R$, which completes the proof.  
\end{proof}

In the sequel, we focus on solving the singular control problem \eqref{exp_reward_value} when the expected rewards is represented by \eqref{expected_reward_discount}, 
namely, we aim at solving 
\begin{align}\label{vf}
V(x;y) = \sup_{D\in\mathcal{A}} J(x;y,D), \quad \text{where} \quad 
J(x;y,D) = \E_x\bigg[\int_0^{\tau^D_0}e^{-rt-\omega^y_t}dD_t\bigg],
\quad x \in \R.
\end{align}
To that end, we present in what follows sufficient conditions for the optimality of an admissible strategy in the form of a verification theorem.

\begin{theorem} \label{verification_theorem}
Suppose that $y \in \mathcal{I} \subseteq [0,\infty)$ 
and $D^*\in\mathcal{A}$ is an admissible control process such that the expected reward $J(\cdot;y,D^*)$ given by \eqref{vf} satisfies the regularity conditions $J(\cdot;y,D^*) \in \mathcal{C}^2((0,b) \cup (b,y) \cup (y,\infty)) \cap \mathcal{C}^1(0,\infty)$ for some $0 \leq b \leq y$. 
If $J(\cdot;y,D^*)$ further satisfies the conditions
\begin{align}
J'(x;y,D^*) &\geq 1, {\quad x\in (0,\infty),} \label{mainthmcond1}  \tag{I} \\
\mathcal{L} J(x;y,D^*) \, I_{\{x\not\in\{b,y\}\}} -\big(r + q \, I_{\{x<y\}} \big) \, J(x;y,D^*) &\leq 0, {\quad x\in (0,\infty),} \label{mainthmcond2} \tag{II}
\end{align}
then $J(x;y,D^*) = V(x;y)$ in \eqref{vf} and the control $D^*$ is optimal.
\end{theorem}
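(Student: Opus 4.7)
The plan is to fix an arbitrary $D\in\mathcal{A}$, set $F(\cdot):=J(\cdot;y,D^*)$, and prove $F(x)\geq J(x;y,D)$ for all $x>0$. Since $F(x)=J(x;y,D^*)\leq V(x;y)$ trivially from the definition \eqref{vf}, this inequality simultaneously yields $F=V(\cdot;y)$ and optimality of $D^*$. The main tool will be a generalised Itô expansion of the discounted candidate $Y_t:=e^{-rt-\omega_t^y} F(X_t^D)$, exploiting that the continuous semimartingale $X^D$ visits the two non-smooth points $\{b,y\}$ only on a Lebesgue-null set of times, so it is irrelevant that $\mathcal{L} F$ is undefined there.

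First I would decompose $D=D^c+D^d$ into its continuous and pure-jump parts and apply the Meyer-Itô formula to $F(X^D)$; this is legitimate because the $\mathcal{C}^1$ matching of $F$ at $b$ and $y$ prevents any local time contribution at those kinks. Combining with $d\,e^{-rt-\omega_t^y}=-e^{-rt-\omega_t^y}(r+q\,I_{\{X_t^D<y\}})\,dt$ one obtains, for $t<\tau_0^D$,
\begin{align*}
Y_t = F(x) & + \int_0^t e^{-rs-\omega_s^y}\big[\mathcal{L} F(X_s^D) - (r + q\, I_{\{X_s^D<y\}}) F(X_s^D)\big]\,ds \\
& - \int_0^t e^{-rs-\omega_s^y} F'(X_s^D)\,dD_s^c + \sum_{0\leq s\leq t} e^{-rs-\omega_s^y}\big[F(X_s^D)-F(X_{s-}^D)\big] + M_t,
\end{align*}
where $M$ is the local martingale arising from the Brownian integral. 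Condition \eqref{mainthmcond2} renders the Lebesgue integral nonpositive, while condition \eqref{mainthmcond1} together with $F\in\mathcal{C}^1$ yields both the jump bound $F(X_{s-}^D-\Delta D_s)-F(X_{s-}^D)\leq -\Delta D_s$ and the continuous-part bound $F'(X_s^D)\,dD_s^c\geq dD_s^c$. Rearranging produces
\begin{align*}
Y_t + \int_{[0,t]} e^{-rs-\omega_s^y}\,dD_s \;\leq\; F(x) + M_t.
\end{align*}

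Next I would localise $M$ by a sequence $\tau_n\uparrow\infty$, stop at $t\wedge\tau_0^D\wedge\tau_n$, take expectations to eliminate the martingale term, and pass to the limit via monotone and dominated convergence. Continuity of $X^D$ forces $X_{\tau_0^D}^D=0$, and $F(0)=J(0;y,D^*)=0$ since no dividend can be extracted from zero surplus; combined with the nonnegativity of $F$ and the linear upper bound for $F$ derived in Section~\ref{sec:bounds}, this makes the stopped expectation of $Y$ vanish in the limit and leaves $\E_x\big[\int_0^{\tau_0^D} e^{-rs-\omega_s^y}\,dD_s\big] \leq F(x)$, which is precisely $J(x;y,D)\leq F(x)$.

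The principal obstacle I expect is the Itô step itself, because of the two kinks of $F'$ and the associated discontinuities of $\mathcal{L} F$ at $b$ and $y$: although the $\mathcal{C}^1$ gluing rules out local time terms, one must argue carefully that the occupation time of $X^D$ at $\{b,y\}$ is almost surely zero, so that the integrand $\mathcal{L} F(X_s^D)$ is unambiguously defined $d\mathbb{P}\otimes ds$-a.e. Secondary difficulties are the integrability estimates needed for $M$ to be a true martingale after localisation and for dominated convergence on $Y$ to apply as $n,t\to\infty$; both rely on the a priori growth bounds on $F$ obtained independently of this verification argument.
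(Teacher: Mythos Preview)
Your proposal is correct and follows essentially the same approach as the paper: apply a generalised It\^o formula to $e^{-rt-\omega_t^y}J(X_t^D;y,D^*)$, use \eqref{mainthmcond1}--\eqref{mainthmcond2} to bound the drift, continuous-control, and jump terms, localise, and pass to the limit via monotone convergence. One minor simplification in the paper's argument that you may wish to adopt: since $J(\cdot;y,D^*)\geq 0$, the stopped term $\E_x[Y_{t\wedge\tau_0^D\wedge\tau_n}]$ is nonnegative and can simply be dropped rather than shown to vanish, which avoids invoking the growth bounds from Section~\ref{sec:bounds} and dominated convergence altogether.
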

\begin{proof}
Let $D \in\mathcal{A}$ be an arbitrary admissible control process and denote its jumps by $\Delta D_t:=D_{t}-D_{t-}$ and its continuous part by $D^c_t:=D_t-\sum_{0\leq s\leq t}\Delta D_s$. 
We also introduce the stopping time
\begin{align*}
\tau_n=n\wedge\inf\{t\geq 0:X^D_t\not\in (0,n)\}.
\end{align*}
and then by applying a generalised It\^{o}'s formula (cf.~e.g.~\cite[p.74]{Peskir} or \cite[Theorem 3.2.]{peskir2007change}), we get 
\begin{align*}
&\exp\Big\{-\int_0^{\tau_n\wedge\tau_0^D} (r + qI_{\{X^D_s<y\}})ds \Big\} J(X^D_{\tau_n\wedge\tau_0^D};y,D^*)-J(x;y,D^*)\\
&\quad= \int_0^{\tau_n\wedge\tau_0^D}\exp\Big\{-\int_0^{t} (r + qI_{\{X^D_s<y\}})ds \Big\}\left(\mathcal{L}J(X^D_t;y,D^*) I_{\{x\not\in\{b,y\}\}} - (r+qI_{\{X^D_t<y\}}) J(X^D_t;y,D^*) \right) dt \\
&\quad\quad + \int_0^{-\tau_n\wedge\tau_0^D} \exp\Big\{-\int_0^{t} (r + qI_{\{X^D_s<y\}})ds \Big\} J'(X^D_t;y,D^*) \big( \sigma dW_t - dD^c_t \big)\\
&\quad\quad - \sum_{0\leq t\leq \tau_n\wedge\tau_0^D} \exp\Big\{-\int_0^{t} (r + qI_{\{X^D_s<y\}}) ds \Big\} \big( J(X_{t-}^D;y,D^*) - J(X_{t-}^D-\Delta D_t;y,D^*) \big),
\end{align*}
since $J(\cdot;y,D^*)$ satisfies $J(\cdot;y,D^*) \in \mathcal{C}^2((0,b) \cup (b,y) \cup (y,\infty)) \cap \mathcal{C}^1(0,\infty)$.  
Thus, by taking expectations, using that the stochastic integral is a martingale, as well as the fundamental theorem of calculus, we get after rearrangements that
\begin{align*}
&J(x;y,D^*) 
= \E_x\bigg[\exp\Big\{-\int_0^{\tau_n\wedge\tau_0^D} (r + qI_{\{X^D_s<y\}})ds \Big\} J(X^D_{\tau_n\wedge\tau_0^D};y,D^*)\bigg] \\ 
&\quad - \E_x\bigg[\int_0^{\tau_n\wedge\tau_0^D}\exp\Big\{-\int_0^{t} (r + qI_{\{X^D_s<y\}})ds \Big\} \left(\mathcal{L}J(X^D_t;y,D^*) I_{\{x\not\in\{b,y\}\}} - (r+qI_{\{X^D_t<y\}}) J(X^D_t;y,D^*) \right) dt \bigg] \\ 
&\quad + \E_x\bigg[\int_0^{\tau_n\wedge\tau_0^D} \exp\Big\{-\int_0^{t} (r + qI_{\{X^D_s<y\}})ds \Big\} J'(X^D_t;y,D^*) dD^c_t \bigg] \\ 
&\quad + \E_x\bigg[\sum_{0\leq t\leq \tau_n\wedge\tau_0} \exp\Big\{-\int_0^{t} (r + qI_{\{X^D_s<y\}}) ds \Big\} \int_0^{\Delta D_t} J'(X^D_{t-}-z;y,D^*) dz \bigg].
\end{align*}
Therefore, by using the conditions \eqref{mainthmcond1}--\eqref{mainthmcond2} for $J(\cdot;y,D^*)$, we get
\begin{align*}
J(x;y,D^*) &\geq \E_x\bigg[\int_0^{\tau_n\wedge\tau_0^{D^*}} \exp\Big\{-\int_0^{t} (r + qI_{\{X^{D^*}_s<y\}})ds \Big\} dD^c_t + \sum_{0\leq t\leq \tau_n\wedge\tau_0} \hspace{-2mm}\exp\Big\{-\int_0^{t} (r + qI_{\{X^D_s<y\}})ds \Big\} \Delta D_t \bigg] \\ 
&= \E_x\bigg[\int_0^{\tau_n\wedge\tau_0^D} \exp\Big\{-\int_0^{t} (r + qI_{\{X^D_s<y\}})ds \Big\} dD_t \bigg].
\end{align*}
Taking the limits as $n \to \infty$ and an application of the monotone convergence theorem, yields that $J(x;y,D^*) \geq J(x;y,D)$. 
Since $D \in \A$ is arbitrary, we can take the supremum on the right-hand side over all $D \in \A$, which gives thanks to the definition \eqref{vf} of $V$ that $J(x;y,D^*) \geq V(x;y)$. 

Since $J(\cdot;y,D^*)$ is also given by \eqref{vf} and $D^*\in\mathcal{A}$ by assumption, we can therefore conclude that $J(x;y,D^*) = V(x;y)$ and consequently that $D^*$ is an optimal control.
\end{proof}

This verification theorem is a tailored result that is constructed to be general enough, in order to cover all subsequent structures of our candidates for the optimal control strategy. 
However, this generality needs to be compensated in the forthcoming analysis with additional results on the construction of the control processes and their admissibility, as well as the construction of candidate value functions and their regularity properties.
We do this on a case-by-case basis for each regime in Section \ref{sec:results}.

\subsection{Classical dividend problem with ruin at zero surplus} 
\label{chapter_classical_case}

In this section, we review the classical dividend problem which aims at maximising the expected reward 
\begin{align} \label{reward_classical}
J_\rho(x;D) := \E_x\bigg[ \int_0^{\tau^D_0} e^{-\rho t} dD_t \bigg],
\quad x \in \R,
\end{align}
for a fixed discounting $\rho>0$, whose value function is thus defined by 
\begin{align}\label{classical_value}
V_\rho(x):=\sup_{D \in \mathcal{A}} J_\rho(x;D).
\end{align}
This problem is nowadays well-understood and its solution can be found in \cite{jeanblanc1995optimization,lokka2008optimal}, among others.
The optimal control $D^*$ is can be characterised by
\begin{equation} \label{SR}
D^{b^*_\rho}_t := (x-b^*_\rho)^+ + L^{b^*_\rho}_t(X^{D^{b^*_\rho}}), \quad t\geq 0, \quad D^{b^*_\rho}_{0-}=0 ,  
\end{equation}
where $L^{b^*_\rho}(X^{D^{b^*_\rho}})$ denotes the (symmetric) local time of $X^{D^{b^*_\rho}}$ at the point $b^*_\rho$,
which splits the state-space into a waiting region $\mathcal{W} = (0,b^*_\rho)$, where no control is exerted, and an action region $\mathcal{D} = [b^*_\rho, \infty)$,
which induces an initial jump of the control process to bring the controlled process from $x > b^*_\rho$ to the boundary $b^*_\rho$ of the closure $\overline{\mathcal{W}}$ of the waiting region (if needed), 
and then prescribes that the minimal amount of control is exerted to keep the controlled process in $\overline{\mathcal{W}}$ (thus solving a Skorokhod reflection problem; see Section \ref{scorohod_sec} for details).    

In particular, the boundary $b_\rho^*$ and the value function $V_\rho$ 
satisfy the free-boundary problem (FBP)
\begin{align}
&\tfrac12 \sigma^2 V_\rho''(x)+\mu V_\rho'(x) - \rho V_\rho(x) = 0 , \quad x\in (0,b_\rho^*), \label{FBP_single_tresholda}\\
&V_\rho(x) = V_\rho(b_\rho^*) + x- b_\rho^*, \qquad \qquad \quad \, x \in [b_\rho^*,\infty), \label{FBP_single_tresholdb}\\ 
&V_\rho(0+) = 0,  \label{FBP_single_tresholdc}\\ 
&V_\rho \in {\mathcal{C}^2(0,\infty)}
\label{FBP_single_tresholdc}.
\end{align}
and take the form 
\begin{align} \label{value_classical}
V_\rho(x) = \begin{cases}
\dfrac{e^{\gamma_1(\rho) x} - e^{\gamma_2(\rho)x}}{\gamma_1(\rho) e^{\gamma_1(\rho) b^*_\rho} - \gamma_2(\rho) e^{\gamma_2(\rho)b^*_\rho}}, 
\quad &x\in[0,b^*_\rho),\\
V_\rho(b^*_\rho) + x - b^*_\rho, 
\quad &x\in [b^*_\rho,\infty),
\end{cases}
\quad \text{and} \quad 
b_\rho^* = \frac{\log\left(\gamma_2^2(\rho) / \gamma_1^2(\rho)\right)}{\gamma_1(\rho)-\gamma_2(\rho)} ,
\end{align}
where $\gamma_2(\rho) < 0 < \gamma_1(\rho)$, for all $\rho>0$, and are given by 
\begin{align} \label{gamma_def}
\gamma_1(\rho)= \sqrt{\frac{\mu^2}{\sigma^4}+\frac{2\rho}{\sigma^2}}-\frac{\mu}{\sigma^2},\quad \gamma_2(\rho)= -\sqrt{\frac{\mu^2}{\sigma^4}+\frac{2\rho}{\sigma^2}}-\frac{\mu}{\sigma^2}
\end{align}
and by using the facts that 
\begin{align} \label{lemma_useful_results_gamma} 
\gamma_1(\rho+q) + \gamma_2(\rho + q)
= \gamma_1(\rho) + \gamma_2(\rho)
= \tfrac{\mu}{\rho}  \gamma_1(\rho) \gamma_2(\rho), 
\quad q > 0,
\end{align}
we further have that 
\begin{align} \label{lemma_useful_results_Vb}
V_{\rho}(b^*_{\rho})
=\frac{\gamma_2^2(\rho)-\gamma_1^2(\rho)}{\gamma_2^2(\rho)\gamma_1(\rho)-\gamma_1^2(\rho)\gamma_2(\rho)}
= \frac{\gamma_1(\rho) + \gamma_2(\rho)}{\gamma_1(\rho) \gamma_2(\rho)} = \frac{\mu}{\rho}.
\end{align}
In particular $V_\rho(x)=J_\rho(x;D^{b^*})$ also satisfies the conditions of Theorem \ref{verification_theorem} with $b=y=0$ and $r=\rho$.

The following monotonicity result for the optimal threshold will be useful in the subsequent analysis. Its proof can be found in Appendix \ref{App_classical}. 
\begin{lemma} \label{lemma_useful_results}
The mapping $\rho \mapsto b^*_{\rho}$ is strictly decreasing on $(0,\infty)$. 
\end{lemma}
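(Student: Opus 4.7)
The plan is to reduce the claim to a direct calculus exercise by a change of variables. Starting from the closed-form expression in \eqref{value_classical}, set $a:=\mu/\sigma^2>0$ and $\Delta(\rho):=\sqrt{a^2+2\rho/\sigma^2}$, so that $\gamma_1(\rho)=\Delta(\rho)-a>0$ and $\gamma_2(\rho)=-\Delta(\rho)-a<0$ by \eqref{gamma_def}. Then $\gamma_1(\rho)-\gamma_2(\rho)=2\Delta(\rho)$ and $\gamma_2^2(\rho)/\gamma_1^2(\rho)=\bigl((\Delta(\rho)+a)/(\Delta(\rho)-a)\bigr)^2$, so the formula for $b^*_\rho$ in \eqref{value_classical} simplifies to
\begin{equation*}
b^*_\rho \;=\; \frac{1}{\Delta(\rho)}\,\log\!\left(\frac{\Delta(\rho)+a}{\Delta(\rho)-a}\right).
\end{equation*}

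Since $\Delta(\rho)$ is a smooth, strictly increasing bijection from $(0,\infty)$ onto $(a,\infty)$, it suffices to show that the map
\begin{equation*}
f(u) \;:=\; \frac{1}{u}\,\log\!\left(\frac{u+a}{u-a}\right), \qquad u\in(a,\infty),
\end{equation*}
is strictly decreasing. Differentiating,
\begin{equation*}
f'(u) \;=\; -\frac{1}{u^{2}}\log\!\left(\frac{u+a}{u-a}\right) \;+\; \frac{1}{u}\!\left(\frac{1}{u+a}-\frac{1}{u-a}\right) \;=\; -\frac{1}{u^{2}}\log\!\left(\frac{u+a}{u-a}\right) \;-\; \frac{2a}{u\,(u^{2}-a^{2})}.
\end{equation*}
Both terms on the right-hand side are strictly negative for $u>a>0$: the logarithm is positive because $(u+a)/(u-a)>1$, and the second term is manifestly negative. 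Hence $f'(u)<0$ on $(a,\infty)$, which by composition with the strictly increasing map $\rho\mapsto\Delta(\rho)$ yields the claim.

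I do not expect a real obstacle here; the only mild subtlety is to rewrite $b^*_\rho$ in a form where the two competing effects of increasing $\rho$ (namely, a larger prefactor $1/\Delta$ and a smaller logarithm) are both manifestly monotone in the same direction, which the substitution above achieves. A purely probabilistic proof — that heavier discounting should induce earlier dividend payments, and hence a lower reflection threshold — is also available and provides a sanity check, but the direct computation above is the cleanest and matches the level of detail appropriate for an appendix proof.
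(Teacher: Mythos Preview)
Your proof is correct. The substitution $u=\Delta(\rho)$ collapses $b^*_\rho$ to the single-variable expression $f(u)=\tfrac{1}{u}\log\!\bigl(\tfrac{u+a}{u-a}\bigr)$, whose derivative is a sum of two manifestly negative terms; composing with the strictly increasing map $\rho\mapsto\Delta(\rho)$ finishes the argument cleanly.

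The paper takes a different route. It keeps the original quotient form of $b^*_\rho$, observes that the denominator $\gamma_1(\rho)-\gamma_2(\rho)$ is positive and increasing, and then argues that it suffices to show the argument of the logarithm, $h(\rho)=\gamma_2^2(\rho)/\gamma_1^2(\rho)$, is strictly decreasing. For this it invokes the Vieta-type identity $\gamma_1+\gamma_2=\tfrac{\mu}{\rho}\gamma_1\gamma_2$ to rewrite $h(\rho)=(\gamma_2(\rho)\mu/\rho-1)^2$ and differentiates, which requires a separate sign check for $\gamma_2'(\rho)-\gamma_2(\rho)/\rho$. Your approach is more self-contained and elementary: it dispenses with the Vieta identity and the auxiliary sign computation, and the monotonicity of \emph{both} factors in $b^*_\rho$ is handled in a single stroke rather than separately. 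The paper's decomposition, on the other hand, isolates the behaviour of $\gamma_2^2/\gamma_1^2$, which may be of independent use elsewhere, but for this lemma your argument is the shorter one.
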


\subsection{Preliminary results for the value function}
\label{sec:bounds}

As a first step towards the construction of our candidate value function and control strategy, we present a useful stylised result for the control problem's value function $V(x;y)$ defined by \eqref{vf} in terms of its monotonicity with respect to the distress threshold $y$ and obtain robust bounds of $V(x;y)$ for all $y \geq 0$. 

\begin{lemma} \label{lem_mono}
For any $0 \leq y_1 \leq y_2 \leq \infty$, the value function $V(x;y)$ defined by \eqref{vf} satisfies 
$$
V_{r+q}(x) \equiv V(x;\infty) \leq V(x;y_2) \leq V(x;y_1) \leq V(x;0) \equiv V_{r}(x), 
\quad \text{for all } x \in \R.
$$
\end{lemma}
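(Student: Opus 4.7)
The plan is to prove the chain of inequalities pointwise in the admissible control $D$, using the representation \eqref{expected_reward_discount} from Lemma \ref{discount_chance_lemma}, and then pass to the supremum. The two endpoint identifications ($y=0$ and $y=\infty$) reduce to observing that the occupation-time penalty degenerates into either a zero or a deterministic additional discount rate.

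\textbf{Step 1: monotonicity in $y$.} Fix $0 \leq y_1 \leq y_2 \leq \infty$ and let $D \in \mathcal{A}$ be arbitrary. Since $y_1 \leq y_2$ implies $I_{\{X^D_s < y_1\}} \leq I_{\{X^D_s < y_2\}}$ pathwise for every $s \geq 0$, definition \eqref{omega_clock} yields $\omega^{y_1}_t \leq \omega^{y_2}_t$ for all $t \geq 0$, $\mathbb{P}$-a.s. Hence the integrand in \eqref{expected_reward_discount} satisfies $e^{-rt-\omega^{y_1}_t} \geq e^{-rt-\omega^{y_2}_t}$, so $J(x;y_1,D) \geq J(x;y_2,D)$ for every $D \in \mathcal{A}$. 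Taking supremum over $\mathcal{A}$ on both sides preserves the inequality and gives $V(x;y_1) \geq V(x;y_2)$.

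\textbf{Step 2: endpoint $y=0$.} For any $D\in\mathcal{A}$ and $s < \tau^D_0$, the definition of $\tau^D_0$ forces $X^D_s > 0$, so $I_{\{X^D_s < 0\}} = 0$ Lebesgue-a.e.\ on $[0,\tau^D_0)$, yielding $\omega^0_t = 0$ for all $t \leq \tau^D_0$. Plugging this into \eqref{expected_reward_discount} gives $J(x;0,D) = J_r(x;D)$ as defined in \eqref{reward_classical}; taking supremum over $\mathcal{A}$ produces $V(x;0) = V_r(x)$.

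\textbf{Step 3: endpoint $y=\infty$.} Here $I_{\{X^D_s < \infty\}} = 1$ identically, so $\omega^\infty_t = qt$, and the integrand in \eqref{expected_reward_discount} reduces to $e^{-(r+q)t}$. Therefore $J(x;\infty,D) = J_{r+q}(x;D)$, and taking supremum over $\mathcal{A}$ yields $V(x;\infty) = V_{r+q}(x)$.

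\textbf{Step 4: conclusion.} Combining the monotonicity from Step 1 with the two endpoint identifications from Steps 2--3 gives the chain of inequalities in the statement for all $x \in \mathbb{R}$. There is no real obstacle here: the result is essentially a direct comparison of integrands, and the only subtlety is the measure-zero issue at the ruin time in Step 2, which is handled automatically because the time integral in $\omega^y$ is a Lebesgue integral.
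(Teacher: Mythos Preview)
Your proof is correct and follows essentially the same approach as the paper: a pathwise comparison of the omega clocks $\omega^{y_1}_t \leq \omega^{y_2}_t$ (equivalently, $rt \leq rt + \omega^y_t \leq (r+q)t$) yields a pointwise ordering of $J(x;y,D)$, and taking the supremum over $D \in \mathcal{A}$ preserves the inequalities. Your Steps 2--3 make the endpoint identifications $V(x;0)=V_r(x)$ and $V(x;\infty)=V_{r+q}(x)$ slightly more explicit than the paper does, but the underlying argument is the same.
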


\begin{proof}
It follows by the definition \eqref{reward_classical}--\eqref{classical_value} of $V_{\rho}$ for a fixed discounting $\rho = r+q>0$, 
the definition \eqref{omega_clock} of the omega clock, which implies that $y \mapsto \omega^y_t$ is increasing and $rt \leq rt + \omega^y_t \leq (r + q)t$ for all $t \geq 0$ and $y \in \R$, 
and the definition \eqref{vf} of $V$, 
that
\begin{align*} 
V_{r+q}(x) 
&= \sup_{D \in \mathcal{A}} \E_x\bigg[ \int_0^{\tau^D_0} e^{-(r+q)t} dD_t \bigg] 
= \E_x\bigg[ \int_0^{\tau^D_0} e^{-(r+q)t} dD^{b^*_{r+q}}_t \bigg] \\
&\leq \E_x\bigg[ \int_0^{\tau^D_0} e^{-(rt + \omega^y_t)} dD^{b^*_{r+q}}_t \bigg]
\leq \sup_{D \in \mathcal{A}} \E_x\bigg[ \int_0^{\tau^D_0} e^{-(rt + \omega^y_t)} dD_t \bigg]
= V(x;y)
, \quad x \in \R .
\end{align*}
Similarly, we can prove the other inequalities.
\end{proof}

Even though we show in Lemma \ref{lem_mono} that $V(x;\cdot)$ is increasing, we unfortunately cannot immediately conclude the structure of the optimal dividend strategy. 
In fact, our analysis in the next section reveals that the exact form of the optimal control strategy differs significantly, both quantitatively and qualitatively, based on the value of the distress threshold $y$.

\section{Main Results} 
\label{sec:results}

A key structural feature of our model is that the solution to the optimal control problem depends critically on the distress threshold $y$, which partitions the surplus state space into a distress region $[0,y]$ and a no-distress region $(y,\infty)$. 
We identify three qualitatively distinct regimes based on the size of the distress region.

\subsection{Supercritical regime} \label{sec:supercritical}

In this regime, we consider large distress thresholds $y$, and we show that in this case, the firm behaves as if it is effectively always under distress. 
In particular, we show that there exists a separating distress threshold $y_u > 0$ such that all distress thresholds $y \in [y_u, \infty)$ are considered {\it high} and the optimal dividend strategy coincides with that of the fully penalised case (with discounting $r+q$ for all $t \geq 0$), and thus becomes independent of $y$. 
In such a case, the optimal strategy takes the form of a classical Skorokhod reflection at the constant boundary $b_{r+q}^*$ defined by \eqref{value_classical}. 
This implies that when the surplus process is in the waiting region $\mathcal{W}^y$, the decision maker does not exert control, while whenever the process is in the action region $\mathcal{D}^y$, the decision maker exerts the minimal amount of downward control to keep the process in the closure $\overline{\mathcal{W}^y}$ of the waiting region. 
In particular, we look for a separating distress threshold $y_u$ such that  
\begin{align}\label{regions_low_y}
\mathcal{W}^y = (0,b_{r+q}^*)
\quad \text{and} \quad 
\mathcal{D}^y = [b_{r+q}^*, \infty), 
\quad y \in [y_u, \infty).
\end{align}
Since we make the ansatz that the firm behaves like it is always in distress, it is suitable to make the ad hoc ansatz that $\mathcal{W}^y \subseteq (0,y_u)$, i.e.~$y_u\geq b_{r+q}^*$. 
Hence, we identify our candidate value function with $V_{r+q}(x)$ given by \eqref{value_classical} as a solution to the FBP \eqref{FBP_single_tresholda}--\eqref{FBP_single_tresholdc}. 

For the optimality of this candidate, we must ensure that $V_{r+q}$ and $b_{r+q}^*$ satisfy the conditions \eqref{mainthmcond1}--\eqref{mainthmcond2} of the Verification theorem.
In particular, by combining the FBP \eqref{FBP_single_tresholda}--\eqref{FBP_single_tresholdc} with the condition \eqref{mainthmcond2} for any $y \geq b_{r+q}^*$, we get the condition
\begin{align*}
\mathcal{L}V_{r+q}(x) - (r+q I_{\{x<y\}}) V_{r+q}(x) \leq 0, \quad x > 0,
\end{align*}
which is equivalent to
\begin{align} \label{yupper_def}
\begin{cases}
0\leq 0, & x \in (0,b^*_{r+q}], \\
\mu - (r+q) (x - b^*_{r+q} + \frac{\mu}{r+q})  \leq 0, & x \in (b^*_{r+q},y), \\
\mu - r (x - b^*_{r+q} + \frac{\mu}{r+q})  \leq 0, & x \in [y,\infty),
\end{cases} 
\quad \Leftrightarrow \quad 
y \geq y_u := b^*_{r+q} + \frac{\mu}r - \frac{\mu}{r+q},
\end{align}
where the equivalence follows from the latter inequality on the left-hand side.
It is thus clear from above that $y_u\geq b_{r+q}^*$, which satisfies our ansatz. 

Furthermore, we observe that $V_{r+q}$ violates condition \eqref{mainthmcond2} at $x=y$, when $y < y_u$ holds true, since 
\begin{equation} \label{Wy}
y < y_u
\quad \Leftrightarrow \quad 
\mathcal{L}V_{r+q}(y) - r V_{r+q}(y) = \mu - r (y - b^*_{r+q} + \frac{\mu}{r+q}) > 0, 
\end{equation}
for the uniquely defined threshold $y_u > 0$ in \eqref{yupper_def}. 

We are now ready to present our first main result 
(cf.~right panels in Figure \ref{fig:1}).

\begin{theorem}\label{theorem_optimal_yupper}
Let $y\geq y_u$, where $y_u$ is defined by \eqref{yupper_def}. 
Then we have that $V(x;y) = V_{r+q}(x)$  for all $x \geq 0$ and the optimal control is given by 
\begin{align*}
D^{b^*_{r+q}}_t=(x-b^*_{r+q})I_{\{x>b^*_{r+q}\}}+ L^{b^*_{r+q}}_t \big( X^{D^{b^*_{r+q}}} \big),
\end{align*}
where $V_{r+q}(\cdot)$ and $b_{r+q}^*$ are given by \eqref{value_classical}. 
\end{theorem}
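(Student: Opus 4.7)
The plan is to apply the verification theorem (Theorem \ref{verification_theorem}) taking $D^* = D^{b^*_{r+q}}$ as in \eqref{SR} (with $\rho = r+q$) and identifying the candidate expected reward $J(\cdot;y,D^*)$ with the classical value function $V_{r+q}$ from \eqref{value_classical}. The parameter $b$ in Theorem \ref{verification_theorem} is taken to be $0$, so the required regularity $\mathcal{C}^2((0,y)\cup(y,\infty))\cap \mathcal{C}^1(0,\infty)$ is automatic from $V_{r+q}\in \mathcal{C}^2(0,\infty)$.

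The first step is to check that $J(x;y,D^{b^*_{r+q}}) = V_{r+q}(x)$. Under Skorokhod reflection at $b^*_{r+q}$, the controlled process $X^{D^{b^*_{r+q}}}$ stays in $[0, b^*_{r+q}]$ for all $t>0$ (after the initial lump-sum jump of size $(x-b^*_{r+q})^+$, if any). Since $y\geq y_u = b^*_{r+q} + \frac{\mu}{r} - \frac{\mu}{r+q} > b^*_{r+q}$, the controlled process lies strictly below $y$ for $\mathbb{P}$-a.e.\ $t>0$, and consequently $\omega^y_t = qt$ almost surely. Substituting this into the expected reward \eqref{expected_reward_discount} and comparing to \eqref{reward_classical}--\eqref{classical_value} gives $J(x;y,D^{b^*_{r+q}}) = V_{r+q}(x)$ directly from the classical theory recalled in Section \ref{chapter_classical_case}.

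Next, I verify the two conditions of the verification theorem applied to $V_{r+q}$. Condition \eqref{mainthmcond1} follows from the standard classical facts that $V'_{r+q}(x) = 1$ on $[b^*_{r+q},\infty)$ and $V'_{r+q}$ is decreasing and $\geq 1$ on $(0,b^*_{r+q})$ (the latter being the free-boundary smooth-fit characterisation used implicitly in \eqref{value_classical}). Condition \eqref{mainthmcond2} is what fixes $y_u$: on $(0,b^*_{r+q})$ it reduces to the ODE \eqref{FBP_single_tresholda} (since $x<y$ forces the discount rate to be $r+q$); on $(b^*_{r+q},y)$ a direct computation using $V_{r+q}(x) = \frac{\mu}{r+q} + x - b^*_{r+q}$ (from \eqref{lemma_useful_results_Vb}) yields $-(r+q)(x-b^*_{r+q}) \leq 0$; and on $[y,\infty)$ the same form of $V_{r+q}$ yields
$$\mathcal{L}V_{r+q}(x) - r V_{r+q}(x) = \mu - r\Big(x - b^*_{r+q} + \tfrac{\mu}{r+q}\Big) \leq 0,$$
which at $x = y$ is exactly the defining inequality of $y_u$ in \eqref{yupper_def} and remains non-positive for $x \geq y$ by monotonicity in $x$.

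The argument is essentially bookkeeping, as the heavy lifting is absorbed in the classical theory of Section \ref{chapter_classical_case} and in the derivation of $y_u$ in \eqref{yupper_def}. The one subtle point worth spelling out is the claim that $\omega^y_t = qt$ almost surely, which rests on the fact that under Skorokhod reflection at $b^*_{r+q}$ the time spent by $X^{D^{b^*_{r+q}}}$ at the reflection boundary (and \emph{a fortiori} at $y>b^*_{r+q}$) has zero Lebesgue measure — a standard property of one-dimensional reflected diffusions.
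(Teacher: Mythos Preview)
Your proposal is correct and follows essentially the same route as the paper: identify $J(\cdot;y,D^{b^*_{r+q}})$ with $V_{r+q}$ (you via $\omega^y_t=qt$ since the reflected process stays in $[0,b^*_{r+q}]\subset[0,y)$, the paper via the observation that $[y,\infty)$ lies inside the action region so the FBP coincides with the classical one), then check conditions \eqref{mainthmcond1}--\eqref{mainthmcond2} exactly as in the derivation \eqref{yupper_def}. One minor remark: your ``subtle point'' about zero Lebesgue measure at the reflection boundary is unnecessary here, since $b^*_{r+q}<y$ strictly and the indicator $I_{\{X^D_s<y\}}$ equals $1$ even when $X^D_s=b^*_{r+q}$; the process never touches $y$ at all.
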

\begin{proof}
Using our knowledge from the classical dividend problem with ruin at zero surplus in Section \ref{chapter_classical_case}, we have that $D^{b^*_{r+q}}\in \mathcal{A}$. 

Also, using our definition \eqref{yupper_def} of $y_u$ which implies that $y \geq y_u > b^*_{r+q}$, we conclude that the expected reward $J(x;y,D^{b^*_{r+q}})$ associated to $D^{b^*_{r+q}}$ solves the FBP \eqref{FBP_single_tresholda}--\eqref{FBP_single_tresholdc} with $\rho=r+q$, since the no-distress region $[y,\infty) \subseteq {D}^y = [b_{r+q}^*, \infty)$, thus does not interfere with the control strategy. 
This implies that $J(\cdot;y,D^{b^*_{r+q}}) = V_{r+q}(\cdot) \in \mathcal{C}^2(0,\infty)$.

Therefore, in order to apply the verification Theorem \ref{verification_theorem}, it remains to prove that $J(x;y,D^{b^*_{r+q}})$ satisfies conditions \eqref{mainthmcond1}-\eqref{mainthmcond2}.
The proof is split in the following three steps.

\vspace{1mm}
{\it Step 1. Condition \eqref{mainthmcond1} for $x\in (0,b_{r+q}^*]$.} 
Since $V_{r+q}''(b^*_{r+q})=0$, we can use Lemma \ref{Lemma_ODE_Sol}.(ii) to conclude that $V_{r+q}$ is concave on $(0,b^*_{r+q})$.
Hence, $V_{r+q}'(\cdot)$ is decreasing on $(0,b^*_{r+q})$ with $V_{r+q}'(b^*_{r+q})=1$, which follows from \eqref{FBP_single_tresholdb} and $V_{r+q}(\cdot) \in \mathcal{C}^2(0,\infty)$, which completes the proof.

\vspace{1mm}
{\it Step 2. Condition \eqref{mainthmcond1} for $x\in (b_{r+q}^*,\infty)$.} 
This is a straightforward consequence of the construction of $V_{r+q}$, which implies that $V_{r+q}'(x)=1$ for all $x\in (b_{r+q}^*,\infty)$.

\vspace{1mm}
{\it Step 3. Condition \eqref{mainthmcond2} for all $x \geq 0$.} 
This follows directly from the construction of $y_u$ and \eqref{yupper_def}. 
\end{proof}

In the following, we turn our attention to reflection strategies at a {\it distress level-dependent} boundary $b^*(y)$ -- still in the same class as before, but now the threshold depends on $y$ -- when $y < y_u$. 
The inequality in \eqref{Wy} motivates us to consider the case such that $y \in \mathcal{W}^y$, i.e. 
$b^*(y) > y$.

\subsection{Subcritical regime} \label{sec_subcritical}

In this regime, we consider small distress thresholds $y$, so that distress is only triggered at low surplus levels. 
We show that there exists a separating distress threshold $y_l > 0$ such that all distress thresholds $y \in (0, y_l]$ are considered {\it low} and the optimal dividend strategy takes the form of a classical Skorokhod reflection at a boundary function $b^*(y)$, which depends explicitly on $y$. 
This implies that when the surplus process is in the waiting region $\mathcal{W}^y$, the decision maker does not exert control, while whenever the process is in the action region $\mathcal{D}^y$, the decision maker exerts the minimal amount of downward control to keep the process in the closure $\overline{\mathcal{W}^y}$ of the waiting region. 
In particular, we look for a separating distress threshold $y_l$ such that   
$$
\mathcal{W}^y = (0,b^*(y))
\quad \text{and} \quad 
\mathcal{D}^y = [b^*(y), \infty) , 
\quad y \in (0, y_l].
$$

Given that for $y=0$, we have the classical dividend problem of Section \ref{chapter_classical_case} with optimal dividend boundary $b_{r}^*$ defined by \eqref{value_classical} for $\rho=r$, we define $b^*(0) := b_{r}^* > 0$. 
For small distress thresholds $y$, we thus expect to have $b^*(y)>y$. 
Hence, for any $y>0$, we construct our candidates $(w(\cdot;y),b^*(y))$ for the value of this strategy and the optimal dividend boundary function, respectively, under the ansatz that $b^*(y)>y$, by solving the following associated FBP 
\begin{align}
&\tfrac12 \sigma^2 w''(x;y)+\mu w'(x;y) - (r+qI_{\{x< y\}})w(x;y) = 0, \quad x\in (0,y)\cup(y,b^*(y)) \label{FBP_lowya}\\
&w(x;y) = w(b^*(y);y) + x - b^*(y), \qquad \qquad \qquad \qquad \quad x\in [b^*(y),\infty) \label{FBP_lowyb}\\ &w(0+;y)=0, \label{FBP_lowyc}\\ 
&w(\cdot;y) \in \mathcal{C}^2 \big((0,y) \cup (y,\infty) \big) \cap  \mathcal{C}^1(0,\infty) \label{FBP_lowyd}.
\end{align}
Notice that, at this stage of the analysis, we refrain from assuming the $\mathcal{C}^2$-regularity of the value $w(\cdot;y)$ at the distress threshold $y$ in \eqref{FBP_lowyd}, given that the state-dependent discount rate $x \mapsto r+qI_{\{x< y\}}$ is discontinuous at $\{y\}$ (cf.~\eqref{FBP_single_tresholdc} for the classical case).

Before commencing the analysis, we define the functions
\begin{align}
\label{def:delta}
\begin{split}
\delta(y) := &\psi_{r+q}(y) - \phi_{r+q}(y), \quad 
\eta(y;b) := \psi'_r(b)\phi_r(y) - \phi'_r(b) \psi_r(y), \quad \\ 
&\psi_\rho(y) := e^{\gamma_1(\rho) \,y}, \quad 
\phi_\rho(y) := e^{\gamma_2(\rho) \,y}, \quad 
y, b, \rho > 0 . 
\end{split}
\end{align}
for $\gamma_i(\cdot),i=1,2$, defined by \eqref{gamma_def}.
We now present the solution to the FBP \eqref{FBP_lowya}--\eqref{FBP_lowyd} and properties of the solution; the proof can be found in Appendix \ref{App_sec_subcritical}.

\begin{lemma} \label{sol:FBP1}
For any $y>0$, the free-boundary problem \eqref{FBP_lowya}--\eqref{FBP_lowyd} admits a unique solution given by 
\begin{align} \label{w_sol_simple_def}
w(x;y)= 
\begin{cases}
K_1(y) \, e^{\gamma_1(r+q) \, x} + K_2(y) \, e^{\gamma_2(r+q) \, x}, \quad &x \in [0,y),\\
K_3(y) \, e^{\gamma_1(r) \, x} + K_4(y) \, e^{\gamma_2(r) \, x}, \quad &x \in [y,b^*(y)),\\
w(b^*(y);y) + x - b^*(y), \quad &x\in [b^*(y),\infty),
\end{cases}
\end{align}
with $\gamma_1(\cdot)$, $\gamma_2(\cdot)$ defined by \eqref{gamma_def}, $K_1(y), \ldots, K_4(y)$ defined by 
\begin{align}\label{constants_simple}
\begin{split}
K_1(y) = & - K_2(y) =  
\frac{\psi_r(y) \eta' (y; b^*(y)) - \psi'_r(y)\eta (y; b^*(y))}{\psi'_r(b^*(y))\bigl(\delta(y) \eta'(y; b^*(y)) - \delta'(y) \eta(y; b^*(y))\bigr)}, \\
K_3(y) = 
&\frac{\phi'_r(b^*(y)) \bigl( \delta \psi'_r - \delta' \psi_r \bigr)(y) + \delta(y) \eta'(y; b^*(y)) - \delta'(y) \eta(y; b^*(y))}{\psi'_r(b^*(y))\bigl(\delta(y) \eta'(y; b^*(y)) - \delta'(y) \eta(y; b^*(y))\bigr)}, \\ 
K_4(y) = 
&\frac{ \big(\psi_r \delta' -  \psi'_r \delta \big)(y)}{\delta(y) \eta'(y; b^*(y)) - \delta'(y) \eta(y; b^*(y))},
\end{split}
\end{align}
where $\delta$, $\eta$, $\psi_\rho$ and $\phi_\rho$ are defined by \eqref{def:delta}, and 
\begin{align} \label{def_deltay}
\begin{split}
b^*(y) &=  y + \Delta(y), \quad \text{with} \quad  
\Delta(y) := \frac{1}{\gamma_1(r)-\gamma_2(r)}\log\left(\frac{(\delta'(y)-\gamma_1(r)\delta(y)) \, \gamma_2^2(r)}{(\delta'(y)-\gamma_2(r)\delta(y)) \, \gamma_1^2(r)}\right),
\end{split}
\end{align}
such that $\Delta(y) > 0$ for all $0 < y \leq y_u$ and $y \mapsto b^*(y)$ is strictly increasing on $(0,\infty)$.
\end{lemma}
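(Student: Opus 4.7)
The plan is to solve the FBP piecewise on the three regions, impose the boundary and matching conditions to determine the four constants together with the free boundary, and finally verify the claimed geometric properties of $b^*(y)$.

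\textbf{Step 1 (Ansatz).} On each of $(0,y)$ and $(y,b^*(y))$ equation \eqref{FBP_lowya} is a constant-coefficient homogeneous linear ODE whose characteristic roots are precisely $\gamma_1(r+q),\gamma_2(r+q)$ and $\gamma_1(r),\gamma_2(r)$ from \eqref{gamma_def}. This forces the exponential ansatz \eqref{w_sol_simple_def} with four unknown coefficients $K_1,\dots,K_4$ and the unknown free boundary $b^*(y)$.

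\textbf{Step 2 (Matching system).} The five scalar conditions that pin down the five unknowns are: (i) $w(0+;y)=0$, giving $K_1=-K_2$; (ii)--(iii) the $\mathcal{C}^0$- and $\mathcal{C}^1$-matching at $y$ from \eqref{FBP_lowyd}; (iv) the $\mathcal{C}^1$-matching (smooth fit) at $b^*(y)$, $K_3\psi_r'(b^*(y))+K_4\phi_r'(b^*(y))=1$; and (v) the $\mathcal{C}^2$-matching (super-contact) at $b^*(y)$, equivalent to $K_3\gamma_1^2(r)\psi_r(b^*(y))+K_4\gamma_2^2(r)\phi_r(b^*(y))=0$, since $w(\cdot;y)$ is affine on $[b^*(y),\infty)$. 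Conditions (iv)--(v) form a $2\times 2$ linear system for $(K_3,K_4)$ whose determinant $\gamma_1(r)\gamma_2(r)(\gamma_2(r)-\gamma_1(r))e^{(\gamma_1(r)+\gamma_2(r))b^*(y)}$ is non-zero, so they determine $K_3,K_4$ as explicit rational functions of $b^*(y)$.

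\textbf{Step 3 (Closed form).} Substituting these expressions together with $K_1=-K_2$ into (ii)--(iii) and eliminating $K_1$ yields the scalar equation
\begin{equation*}
e^{(\gamma_1(r)-\gamma_2(r))(b^*(y)-y)}=\frac{(\delta'(y)-\gamma_1(r)\delta(y))\,\gamma_2^2(r)}{(\delta'(y)-\gamma_2(r)\delta(y))\,\gamma_1^2(r)},
\end{equation*}
whose logarithm is exactly \eqref{def_deltay}. The Wronskian-type quantity $\delta(y)\eta'(y;b^*(y))-\delta'(y)\eta(y;b^*(y))$ emerges naturally as the common denominator, and back-substitution produces the explicit constants \eqref{constants_simple}.

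\textbf{Step 4 (Positivity and monotonicity).} For $\Delta(y)>0$ on $(0,y_u]$, it suffices to show the right-hand side of the displayed equation exceeds $1$. At $y=0$, $\delta(0)=0$ and $\delta'(0)=\gamma_1(r+q)-\gamma_2(r+q)>0$ reduce the ratio to $\gamma_2^2(r)/\gamma_1^2(r)>1$ (using $|\gamma_2(r)|>\gamma_1(r)$ by \eqref{gamma_def}), consistently recovering $\Delta(0)=b_r^*$ from \eqref{value_classical}. I would then differentiate the logarithm of the ratio in $y$ and show its sign is governed by $\delta,\delta'$ combined with $\gamma_1(r+q)>\gamma_1(r)>0>\gamma_2(r)>\gamma_2(r+q)$; the locus where the ratio crosses $1$ can then be identified with $y_u$ through the identity \eqref{lemma_useful_results_gamma}, matching the independent definition \eqref{yupper_def}. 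Strict monotonicity of $y\mapsto b^*(y)=y+\Delta(y)$ on $(0,\infty)$ would follow from the implicit function theorem applied to $F(b,y)=0$ encoding the scalar equation, with the signs of $\partial_b F$ and $\partial_y F$ extracted from the same estimates.

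\textbf{Main obstacle.} The linear-algebra reduction in Steps 2--3 is routine but bookkeeping-heavy; the real work is in Step 4. Establishing that the ratio in the logarithm crosses $1$ precisely at $y_u$ requires exploiting the quadratic identity \eqref{lemma_useful_results_gamma} linking $\gamma_i(r)$ and $\gamma_i(r+q)$ at the right moment, since the definition \eqref{yupper_def} of $y_u$ was obtained from the supercritical HJB inequality for $V_{r+q}$ rather than from this FBP. Moreover, proving strict monotonicity of $b^*$ on all of $(0,\infty)$ -- even past $y_u$, where $\Delta$ may have changed sign -- needs global, not merely local, control of the relevant derivative.
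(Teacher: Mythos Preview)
Your Steps~1--3 are essentially the paper's argument: the same exponential ansatz, the same five scalar conditions, and the same reduction to an explicit scalar equation for $b^*(y)$. The only cosmetic difference is the order of elimination --- you solve the $\mathcal{C}^1$/$\mathcal{C}^2$ conditions at $b^*(y)$ first, whereas the paper solves the four linear conditions \eqref{lowy_lineadereq} for $K_1,\dots,K_4$ (with $b^*(y)$ as a parameter) and then imposes $w''(b^*(y)-;y)=0$, noting that the ratio $-K_4/K_3$ is in fact independent of $b^*(y)$ so the resulting equation is explicit. Both routes land on \eqref{def_deltay}.

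Step~4, however, rests on a misconception. You expect the ratio inside the logarithm to \emph{cross $1$ at $y_u$}, i.e.\ $\Delta(y_u)=0$, and you try to tie this to the definition \eqref{yupper_def}. That is false: the paper shows $\Delta(y_u)>0$ strictly (Lemma~\ref{lemma:yl}(iv)), so the ratio remains strictly above $1$ at $y_u$ and there is no such crossing to identify. The claim in the lemma is only one-sided ($\Delta(y)>0$ on $(0,y_u]$), and the way the paper obtains it is by proving $\Delta(y_u)>0$ directly from the inequality $\delta(y_u)/\delta'(y_u)<\mu/r$ (Lemma~\ref{lemma:yl}(iii)), combined with the observation that $\Delta$ is strictly decreasing.

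Similarly, your implicit-function-theorem plan for the monotonicity of $b^*$ is off target: $b^*(y)=y+\Delta(y)$ is already explicit, so one simply differentiates. The real content is the two-sided bound $-1<\Delta'(y)<0$ on $(0,\infty)$, which the paper gets by rewriting $\Delta$ via the symmetric constants $c_1=\gamma_1(r+q)-\gamma_1(r)=\gamma_2(r)-\gamma_2(r+q)$ and $c_2=\gamma_1(r+q)-\gamma_2(r)$, computing $\Delta'$ and $\Delta''$ explicitly, and using $\Delta'(0)=-1$ together with $\Delta''>0$. This simultaneously gives $\Delta>0$ on $(0,y_u]$ (decreasing, positive at the right endpoint) and $(b^*)'=1+\Delta'>0$ globally. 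Your proposed sign analysis of the log-derivative would at best yield $\Delta'<0$, which is not enough for either conclusion without the lower bound $\Delta'>-1$.
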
 

Even though the solution $w(x;y)$ to the FBP \eqref{FBP_lowya}--\eqref{FBP_lowyd} is obtained in Lemma \ref{sol:FBP1} for all $y>0$, it cannot serve as our candidate value function for all $y>0$. 
According to the following result, which is proved in Appendix \ref{App_sec_subcritical}, the function $w(x;y)$ given by Lemma \ref{sol:FBP1} satisfies condition \eqref{mainthmcond1} of the verification Theorem \ref{verification_theorem} (and can act as our candidate value function) if and only if $y \in (0,y_l]$. 
An additional important result is the complete and unique characterisation of the latter separating distress threshold $y_l$, which thus defines the subcritical regime of {\it low distress levels} $y \in (0,y_l]$.

\begin{lemma}\label{lemma_yl}
Let $(w(\cdot;y),b^*(y))$ be given by \eqref{w_sol_simple_def}--\eqref{def_deltay} and recall $\delta(y)$ from \eqref{def:delta}. 
Then, there exists a unique solution $y_l \in (0,y_u)$ to the equation $f(y) = 0$, where $f:(0,y_u) \to \R$ is defined by
\begin{align} \label{ylower_eq_charac} 
f(y) := \left(- \frac{\gamma_1(r)}{\gamma_2(r)}\delta'(y) + \gamma_1(r)\delta(y) \right) \left(\frac{\gamma_2^2(r)}{\gamma_1^2(r)} \, \frac{\delta'(y) - \gamma_1(r) \delta(y)}{\delta'(y) - \gamma_2(r) \delta(y)} \right)^{\frac{\gamma_1(r)}{\gamma_1(r) - \gamma_2(r)}} - \delta'(b^*_{r+q}) .
\end{align}
and $y_u$ is defined in \eqref{yupper_def}. 
Furthermore, we have that $y_l\in (b^*_{r+q},y_u)$ and 
\begin{align} \label{ylower_def}
\begin{split}
\begin{cases}
w'(b^*_{r+q};y)\geq 1,\quad y\in (0,y_l),\\
w'(b^*_{r+q};y)< 1,\quad y\in (y_l,\infty).
\end{cases}
\end{split}
\end{align}
\end{lemma}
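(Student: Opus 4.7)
The plan is to (i) identify the equation $f(y) = 0$ with the condition $w'(b^*_{r+q}; y) = 1$ on the relevant range, (ii) establish strict monotonicity of $f$ on $(0, \infty)$, and (iii) locate its unique zero in $(b^*_{r+q}, y_u)$ via sign checks at the endpoints.

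For (i), I combine \eqref{constants_simple} with $K_1(y) = -K_2(y)$ and the explicit form of $\Delta(y)$ from \eqref{def_deltay}. Direct simplification yields
$$K_1(y) = \frac{-\gamma_2(r)}{\gamma_1(r)\,\beta(y)\,e^{\gamma_1(r)\Delta(y)}}, \qquad \beta(y) := \delta'(y) - \gamma_2(r)\delta(y),$$
and rewriting $e^{\gamma_1(r)\Delta(y)}$ via \eqref{def_deltay} shows $1/K_1(y) = f(y) + \delta'(b^*_{r+q})$. When $y > b^*_{r+q}$, the point $b^*_{r+q}$ lies in the first branch $[0, y)$ of \eqref{w_sol_simple_def}, so $w'(b^*_{r+q}; y) = K_1(y)\,\delta'(b^*_{r+q})$, and positivity of both factors delivers the equivalence $w'(b^*_{r+q}; y) \geq 1 \iff f(y) \leq 0$. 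For the complementary range $y \in (0, b^*_{r+q}]$, since $y \mapsto b^*(y)$ is strictly increasing with $b^*(0) = b^*_r > b^*_{r+q}$ (Lemmas \ref{sol:FBP1} and \ref{lemma_useful_results}), $b^*_{r+q}$ lies in $[y, b^*(y))$; on this interval $w$ solves the $r$-ODE with smooth fit and super-contact at $b^*(y)$, hence is concave by Lemma \ref{Lemma_ODE_Sol}(ii), yielding $w'(b^*_{r+q}; y) \geq 1$ and in particular $w'(b^*_{r+q}; b^*_{r+q}) > 1$, which by $\mathcal{C}^1$-continuity and (i) gives $f(b^*_{r+q}) < 0$.

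For (ii), logarithmic differentiation reduces $K_1'(y) < 0$ to $\gamma_1(r)\alpha'(y)\beta(y) - \gamma_2(r)\beta'(y)\alpha(y) > 0$, with $\alpha(y) := \delta'(y) - \gamma_1(r)\delta(y)$. Expanding and factoring this expression gives $(\gamma_1(r) - \gamma_2(r))\,\delta'(y)\,\bigl[\delta''(y) - (\gamma_1(r) + \gamma_2(r))\delta'(y) + \gamma_1(r)\gamma_2(r)\delta(y)\bigr]$; using \eqref{lemma_useful_results_gamma} together with the Vieta identity $\gamma_1(\rho)\gamma_2(\rho) = -2\rho/\sigma^2$, the bracketed factor collapses to $(2q/\sigma^2)\delta(y)$, so positivity is immediate from $\delta, \delta' > 0$ on $(0, \infty)$. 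Hence $K_1$ is strictly decreasing and $f$ is strictly increasing on $(0, \infty)$, which guarantees uniqueness of any zero.

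The main obstacle is the sign at the upper endpoint, i.e.\ $f(y_u) > 0$. Here I invoke Theorem \ref{theorem_optimal_yupper}: $V(\cdot; y_u) = V_{r+q}(\cdot)$, and since $w(\cdot; y_u)$ is the expected reward of the admissible reflection strategy at $b^*(y_u)$, we have $w(\cdot; y_u) \leq V_{r+q}(\cdot)$. On $[0, b^*_{r+q}] \subset [0, y_u)$ both functions are positive multiples of $\delta$, so $K_1(y_u) \leq 1/\delta'(b^*_{r+q})$. To rule out equality I set $h := w(\cdot; y_u) - V_{r+q}$: equality would force $h(b^*_{r+q}) = h'(b^*_{r+q}) = h''(b^*_{r+q}) = 0$, but $h$ satisfies $\mathcal{L}h - (r+q)h = (r+q)V_{r+q}(x) - \mu$ on $(b^*_{r+q}, y_u)$, whose right-hand side vanishes at $b^*_{r+q}$ with strictly positive derivative $r+q$, forcing $h'''(b^*_{r+q}) = 2(r+q)/\sigma^2 > 0$ and hence $h(x) > 0$ just above $b^*_{r+q}$, contradicting $h \leq 0$. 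Thus $f(y_u) > 0$. Combined with (ii) and $f(b^*_{r+q}) < 0$ from the previous paragraph, the intermediate value theorem then yields a unique zero $y_l \in (b^*_{r+q}, y_u)$, and the inequality \eqref{ylower_def} on all of $(0, \infty)$ follows from the sign of $f$ on $(b^*_{r+q}, \infty)$ together with the direct concavity argument on $(0, b^*_{r+q}]$.
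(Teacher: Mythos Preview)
Your proof is correct and takes a genuinely different route from the paper in two places. For the monotonicity of $y\mapsto K_1(y)$ (equivalently, of $f$), the paper computes $G'(y)$ from \eqref{eq:G_y} and concludes $G'(y)>0$ only on $(b^*_{r+q},y_u]$ because it relies on $\delta''(y)>0$, which holds only there (Lemma~\ref{lemma:yl}.(ii)). Your reduction via logarithmic differentiation to $(\gamma_1(r)-\gamma_2(r))\,\delta'(y)\,[\delta''(y)-(\gamma_1(r)+\gamma_2(r))\delta'(y)+\gamma_1(r)\gamma_2(r)\delta(y)] = (\gamma_1(r)-\gamma_2(r))\,\delta'(y)\cdot \tfrac{2q}{\sigma^2}\delta(y)$ is cleaner and yields strict monotonicity on the full half-line, which is a genuine improvement. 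For the endpoint $f(y_u)>0$, the paper argues purely analytically through the chain of inequalities in Lemma~\ref{lemma:yl}.(iii)--(iv), whereas your argument is control-theoretic: you identify $w(\cdot;y_u)$ as the expected reward of the reflection strategy at $b^*(y_u)$, bound it by $V_{r+q}=V(\cdot;y_u)$ via Theorem~\ref{theorem_optimal_yupper}, compare the common factor $\delta$ on $[0,b^*_{r+q}]$, and rule out equality by a third-order Taylor expansion of $h=w-V_{r+q}$ just above $b^*_{r+q}$. This is elegant and avoids the auxiliary Lemma~\ref{lemma:yl} almost entirely; the one caveat is that the identification $w(\cdot;y_u)=J(\cdot;y_u,D^{b^*(y_u)})$ is, in the paper's ordering, only established later (inside Theorem~\ref{thm:sub}). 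However, that identification is a self-contained It\^o computation requiring only $b^*(y_u)>y_u$ (which follows from $\Delta(y_u)>0$ in Lemma~\ref{sol:FBP1}), so there is no circularity---you should just flag that this step is logically independent of the present lemma. The treatment of $y\in(0,b^*_{r+q}]$ via concavity on $[y,b^*(y))$ matches the paper's Step~1.
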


\begin{corollary} \label{help_corr}
Recall the unique solution $(w(\cdot;y),b^*(y))$ to the free-boundary problem \eqref{FBP_lowya}-\eqref{FBP_lowyd} given by Lemma \ref{sol:FBP1}, and the unique solution $y_l$ of \eqref{ylower_def} in Lemma \ref{lemma_yl}. 
Then, we have 
\begin{align*}
K_1(y_l) = - K_2(y_l) = \frac{1}{\gamma_1(r+q) e^{\gamma_1(r+q) b^*_{r+q}} - \gamma_2(r+q) e^{\gamma_2(r+q)b^*_{r+q}}} 
\quad \text{and} \quad 
w(x;y_l)
\;  x \in [0,b^*_{r+q}],
\end{align*}
where $V_{r+q}$ is the solution \eqref{value_classical} of the classical dividend problem \eqref{classical_value} with $\rho = r+q$.
\end{corollary}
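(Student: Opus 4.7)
My approach is to read off $K_1(y_l)$ from the defining characterisation \eqref{ylower_def} of $y_l$ via a smooth-fit-type argument, and then match the resulting formula for $w(\cdot;y_l)$ on $[0,b^*_{r+q}]$ with the classical solution $V_{r+q}$ in \eqref{value_classical}.

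First, I would apply the boundary condition $w(0+;y)=0$ from \eqref{FBP_lowyc} to the top branch of \eqref{w_sol_simple_def}, which forces $K_1(y)+K_2(y)=0$, so that $K_1(y)=-K_2(y)$ for every $y>0$. Since Lemma \ref{lemma_yl} gives $y_l\in(b^*_{r+q},y_u)$, the point $b^*_{r+q}$ lies strictly inside $(0,y_l)$, so $w(\cdot;y_l)$ is governed by the same $(r+q)$-branch of \eqref{w_sol_simple_def} throughout $[0,b^*_{r+q}]$. Using that $\delta'(z)=\gamma_1(r+q)e^{\gamma_1(r+q)z}-\gamma_2(r+q)e^{\gamma_2(r+q)z}$ from \eqref{def:delta}, I obtain
\[
w(x;y_l) = K_1(y_l)\,\delta(x), \quad x\in[0,b^*_{r+q}], \qquad w'(b^*_{r+q};y_l) = K_1(y_l)\,\delta'(b^*_{r+q}).
\]

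Second, the constants $K_1(y),\ldots,K_4(y)$ in \eqref{constants_simple} are rational combinations of exponentials of $y$ and $b^*(y)$, and $y\mapsto b^*(y)=y+\Delta(y)$ is smooth by \eqref{def_deltay}, so $y\mapsto w'(b^*_{r+q};y)$ is continuous at $y_l$. Passing to the limit in both inequalities of \eqref{ylower_def} therefore yields $w'(b^*_{r+q};y_l)=1$, which combined with the previous display gives
\[
K_1(y_l) = \frac{1}{\delta'(b^*_{r+q})} = \frac{1}{\gamma_1(r+q)e^{\gamma_1(r+q)b^*_{r+q}}-\gamma_2(r+q)e^{\gamma_2(r+q)b^*_{r+q}}} = -K_2(y_l),
\]
which is the first assertion. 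Substituting back into $w(x;y_l)=K_1(y_l)\,\delta(x)$ on $[0,b^*_{r+q}]$ and comparing with the explicit formula for $V_{r+q}(x)$ on $[0,b^*_{r+q})$ in \eqref{value_classical} (extended to the closed interval by continuity) delivers $w(x;y_l)=V_{r+q}(x)$ on $[0,b^*_{r+q}]$, which is the second assertion.

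The only delicate point is the continuity of $y\mapsto w'(b^*_{r+q};y)$ at $y_l$, but this is immediate from the explicit dependence of the constants in \eqref{constants_simple} on $y$ (assuming the denominator there does not vanish at $y_l$, which is implicit in Lemma \ref{sol:FBP1}). There is no essential difficulty: the strict inequality $b^*_{r+q}<y_l$ from Lemma \ref{lemma_yl} ensures that $w(\cdot;y_l)$ is evaluated entirely within the ``distressed'' $(r+q)$-branch at $b^*_{r+q}$ and hence already satisfies the same ODE as $V_{r+q}$, leaving only a single coefficient to be pinned down by the smooth-fit relation $w'(b^*_{r+q};y_l)=1$.
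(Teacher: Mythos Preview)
Your proposal is correct and follows essentially the same approach as the paper: both use the continuity of $y\mapsto w'(b^*_{r+q};y)$ together with the defining characterisation \eqref{ylower_def} to obtain $w'(b^*_{r+q};y_l)=1$, then read off $K_1(y_l)=1/\delta'(b^*_{r+q})$ from the $(r+q)$-branch of \eqref{w_sol_simple_def} (using $K_1=-K_2$), and finally match with the explicit formula \eqref{value_classical} for $V_{r+q}$ on $[0,b^*_{r+q}]$. Your write-up is in fact slightly more explicit than the paper's in deriving $K_1(y)+K_2(y)=0$ directly from the boundary condition at zero and in identifying $\delta'(b^*_{r+q})$ with the denominator in \eqref{value_classical}.
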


\begin{proof}
Using the continuity of $y \mapsto w'(b^*_{r+q}; y)$ at $y=y_l$ (recall the continuity of $K_1(\cdot), \dots, K_4(\cdot)$ on $(0, \infty)$), we see from \eqref{ylower_def} that 
$w'(b^*_{r+q}; y_l)=1$ and from \eqref{value_classical} that $V_{r+q}'(b^*_{r+q})=1$ as well. 
Hence, using \eqref{w_sol_simple_def}--\eqref{constants_simple} together with \eqref{def:delta}, we observe from \eqref{value_classical} that
\begin{align*}
w'(b^*_{r+q}; y_l) 
&= K_1(y_l) \left(\psi'_{r+q}(b^*_{r+q}) - \phi'_{r+q}(b^*_{r+q}) \right) \\
&= 1 
= \frac{\left(\psi'_{r+q}(b^*_{r+q})-\phi'_{r+q}(b^*_{r+q})\right)}{\gamma_1(r+q) e^{\gamma_1(r+q) b^*_{r+q}} - \gamma_2(r+q) e^{\gamma_2(r+q)b^*_{r+q}}} 
= V_{r+q}'(b^*_{r+q}),
\end{align*}
which implies the desired expressions for $K_1(y_l)$ and $K_2(y_l)$ and consequently we get that $w(x; y_l) = V_{r+q}(x)$, for all $x \in [0,b^*_{r+q}]$.
\end{proof}

We are now in position to prove that the solution $w(\cdot;y)$ to the FBP \eqref{FBP_lowya}--\eqref{FBP_lowyd} obtained in Lemma \ref{sol:FBP1} does indeed identify with the value function of the control problem when we are in the subcritical regime of low distress levels $y \in (0,y_l]$. 
We also explicitly construct the optimal control strategy corresponding to the waiting and action regions defined in \eqref{regions_low_y}
(cf.~left panels in Figure \ref{fig:1}).

\begin{theorem} \label{thm:sub}
Recall the unique solution $(w(\cdot;y), b^*(y))$ of the free-boundary problem \eqref{FBP_lowya}--\eqref{FBP_lowyd} obtained in Lemma \ref{sol:FBP1}, and the unique solution $y_l$ of the equation \eqref{ylower_def} in Lemma \ref{lemma_yl}. 
Then, for any $y\in(0,y_l]$, we have that the value function of the singular control problem \eqref{vf} is given by $V(x;y) = w(x;y)$ for all $x\geq 0$ and the control process $D^{b^*(y)}$ defined by 
\begin{align*}
D^{b^*(y)}_t := (x-b^*(y)) I_{\{x>b^*(y)\}} + L^{b^*(y)}_t(X^{D^{b^*(y)}}), 
\quad t \geq 0,
\end{align*}
is admissible and optimal.
\end{theorem}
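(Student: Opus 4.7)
The plan is to apply the verification Theorem \ref{verification_theorem} with the candidate $D^* = D^{b^*(y)}$ and candidate reward $w(\cdot;y)$ from Lemma \ref{sol:FBP1}, provided we can establish three items: (a) $D^{b^*(y)} \in \mathcal{A}$ and $J(\cdot;y,D^{b^*(y)}) = w(\cdot;y)$; (b) the regularity $w(\cdot;y) \in \mathcal{C}^2((0,b^*(y))\cup(b^*(y),y)\cup(y,\infty)) \cap \mathcal{C}^1(0,\infty)$ with the relevant choice $b = y$ (since by Lemma \ref{sol:FBP1} we have $b^*(y) > y$, the first interval in the verification theorem is vacuous and the roles of $b$ and $y$ swap); (c) the pointwise conditions \eqref{mainthmcond1} and \eqref{mainthmcond2}.

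For (a), admissibility of $D^{b^*(y)}$ follows by the same Skorokhod reflection construction used in Section~\ref{chapter_classical_case} (and formalised in Appendix \ref{sec:construction}), since $b^*(y)$ is a fixed positive constant and $x \mapsto L^{b^*(y)}(X^{D^{b^*(y)}})$ only increases on $\{X^{D^{b^*(y)}} = b^*(y)\}$. The identification $J(\cdot;y,D^{b^*(y)}) = w(\cdot;y)$ is obtained by applying the generalised It\^o formula from the proof of the verification theorem to $w(\cdot;y)$ along the controlled process $X^{D^{b^*(y)}}$: the ODE \eqref{FBP_lowya} annihilates the drift term on $(0,y)\cup(y,b^*(y))$, the reflection ensures that $X^{D^{b^*(y)}}$ never enters $(b^*(y),\infty)$ except at $t=0$, so increments of $D^{b^*(y)c}$ only occur at $b^*(y)$ where $w'(b^*(y);y)=1$, and the only possible jump is the initial one from $x$ down to $b^*(y)$ whose contribution matches $x - b^*(y)$ by \eqref{FBP_lowyb}. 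The boundary condition \eqref{FBP_lowyc} and the exponential decay at ruin close the identification.

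For (c), we treat the two conditions separately. Condition \eqref{mainthmcond2} on $[b^*(y),\infty)$ requires $\mu - r w(x;y) \leq 0$ for $x \geq b^*(y)$, since here $w(x;y) = w(b^*(y);y) + x - b^*(y)$, $w' = 1$, $w'' = 0$, and $x > b^*(y) > y$ places us in the no-distress region; as the left-hand side is decreasing in $x$, it suffices to check it at $x = b^*(y)$, which reduces to $w(b^*(y);y) \geq \mu/r$. This follows from Lemma \ref{lem_mono} and \eqref{lemma_useful_results_Vb}, since $w(\cdot;y) \geq V_{r+q}(\cdot)$ at the free boundary combined with the monotonicity of $y \mapsto b^*(y)$ from Lemma \ref{sol:FBP1} and the classical identity $V_r(b^*_r) = \mu/r$. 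Condition \eqref{mainthmcond2} on $(0,y)\cup(y,b^*(y))$ holds with equality by \eqref{FBP_lowya}. Condition \eqref{mainthmcond1} on $[b^*(y),\infty)$ is immediate since $w' \equiv 1$ there. The remaining task, which I expect to be the main obstacle, is condition \eqref{mainthmcond1} on $(0,b^*(y))$: showing $w'(x;y) \geq 1$ throughout.

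To handle this obstacle, I would exploit the piecewise structure. On each of $(0,y)$ and $(y,b^*(y))$, $w(\cdot;y)$ solves a linear second-order ODE with constant coefficients; standard convexity/concavity analysis via the auxiliary Lemma \ref{Lemma_ODE_Sol} (used in Step~1 of the proof of Theorem \ref{theorem_optimal_yupper}) together with $w'(b^*(y);y) = 1$ and $w''(b^*(y);y) = 0$ (smooth pasting, built into the construction in Lemma \ref{sol:FBP1}) should propagate concavity of $w(\cdot;y)$ on $(y,b^*(y))$, yielding $w' \geq 1$ on this subinterval. On the distress piece $(0,y)$, $w(\cdot;y)$ satisfies the same ODE as $V_{r+q}$ but with different boundary values at $0$ and $y$; I would compare $w(\cdot;y)$ and $V_{r+q}$ on $(0,y)$ using Lemma \ref{lemma_yl} together with Corollary \ref{help_corr}, which pins down the situation exactly at $y = y_l$ and shows $w(\cdot;y_l) \equiv V_{r+q}$ on $[0,b^*_{r+q}]$. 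For $y < y_l$, the strict inequality $w'(b^*_{r+q};y) \geq 1$ from \eqref{ylower_def}, combined with the $\mathcal{C}^1$-fit at $y$ and the concavity on each sub-piece, should extend $w' \geq 1$ down to the origin; here the delicate point is managing the discontinuity of $w''$ at $x = y$ and ensuring the $\mathcal{C}^1$-pasting does not produce an unwanted local minimum of $w'$ in $(0,y)$ below~$1$.
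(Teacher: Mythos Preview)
Your overall architecture matches the paper's: apply the verification theorem after (a) admissibility and the identification $J(\cdot;y,D^{b^*(y)})=w(\cdot;y)$, (b) the regularity from the FBP, and (c) checking \eqref{mainthmcond1}--\eqref{mainthmcond2}. Two places in part~(c), however, need correction.

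\textbf{Condition \eqref{mainthmcond2} on $[b^*(y),\infty)$.} Your reduction to $w(b^*(y);y)\geq \mu/r$ is correct, but your justification via Lemma~\ref{lem_mono} is circular: that lemma bounds $V(\cdot;y)$, not $w(\cdot;y)$, and we have not yet shown $w=V$. The paper's argument is direct and you already have all the ingredients: since $w(\cdot;y)\in\mathcal{C}^2$ at $b^*(y)$ and solves the ODE \eqref{FBP_lowya} on $(y,b^*(y))$, letting $x\uparrow b^*(y)$ in the ODE with $w'(b^*(y);y)=1$ and $w''(b^*(y);y)=0$ gives $\mu - r\,w(b^*(y);y)=0$ exactly. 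Then $g'(x;y)=-r<0$ finishes the argument.

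\textbf{Condition \eqref{mainthmcond1} on $(0,b^*(y))$.} Your plan to use Lemma~\ref{Lemma_ODE_Sol} for concavity on $(y,b^*(y))$ is right and is what the paper does. But on $(0,y)$ your phrase ``concavity on each sub-piece'' is incorrect in general and hides the actual case split the paper makes. The key observation is that $w(x;y)=K_1(y)\delta(x)$ on $(0,y)$ with $K_1(y)>0$, and $\delta''$ changes sign exactly at $b^*_{r+q}$ (Lemma~\ref{lemma:yl}.(ii)). Hence:
\begin{itemize}
\item If $y\leq b^*_{r+q}$, then $w(\cdot;y)$ is concave on all of $(0,y)$; combined with concavity on $(y,b^*(y))$ and $w'(b^*(y);y)=1$ you get $w'\geq 1$.
\item If $b^*_{r+q}<y\;(\leq y_l)$, then $w(\cdot;y)$ is concave on $(0,b^*_{r+q})$, \emph{convex} on $(b^*_{r+q},y)$, and concave on $(y,b^*(y))$. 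Here the inequality $w'(b^*_{r+q};y)\geq 1$ from Lemma~\ref{lemma_yl} is exactly what controls the minimum of $w'$: it is attained at $b^*_{r+q}$ on $(0,y)$ (by concave--then--convex), and at $b^*(y)$ on $(y,b^*(y))$ (by concavity), and both values are $\geq 1$.
\end{itemize}
So your instinct to use \eqref{ylower_def} is correct, but you must allow for the convex piece $(b^*_{r+q},y)$ rather than asserting concavity throughout.
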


\begin{proof}
We first note that $D^{b^*(y)}$ is admissible, i.e.~$D^{b^*(y)}\in \mathcal{A}$; see Section \ref{scorohod_sec} for details. 

Then, we can show that the expected reward $J(x;y,D^{b^*(y)})$ associated to $D^{b^*(y)}$ equals the solution $w(x;y)$ to the FBP \eqref{FBP_lowya}--\eqref{FBP_lowyd}; the proof of this part is omitted since it is a simpler version of the arguments included in the proof of Theorem \ref{thm:sub} (for the more complicated critical regime).
Thanks to this, we therefore have that $J(\cdot;y,D^{b^*(y)}) \in \mathcal{C}^2((0,y) \cup (y,\infty)) \cap \mathcal{C}^1(0,\infty)$.

Therefore, in order to apply the verification Theorem \ref{verification_theorem}, it remains to prove that $J(\cdot;y,D^{b^*(y)}) = w(\cdot;y)$ satisfies conditions \eqref{mainthmcond1}-\eqref{mainthmcond2}, which take the form
\begin{align}
w'(x;y) &\geq 1, \quad x\in (0,\infty), \label{mainthmcond1sub}  \tag{I'} \\
g(x;y) := \mathcal{L} w(x;y) \, I_{\{x\not=y\}} -\big(r + q \, I_{\{x<y\}} \big) \, w(x;y) &\leq 0, \quad x\in (0,\infty). \label{mainthmcond2sub}  \tag{II'} 
\end{align}
The proof is split in the following four steps.

\vspace{1mm}
{\it Step 1. Condition \eqref{mainthmcond1sub} for $x\in (0,b^*(y))$.} 
We begin by observing from Lemma \ref{lemma:yl}.(vi), \eqref{w_sol_simple_def}--\eqref{constants_simple} and \eqref{gamma_def}, that 
$$
w'(0+;y) = K_1(y) \big( \gamma_1(r+q) - \gamma_2(r+q) \big) > 0.
$$
Thus, we can use Lemma \ref{Lemma_ODE_Sol}.(i) for $\rho=r+q$, $\ubar{x}=z_0=0$ and $z_1 = w'(0+;y)>0$, to see that 
$$
w(x;y) > 0 \quad \text{and} \quad w'(x;y) > 0 , \quad \text{for all } x \in (0,y].
$$ 
Then, given that $y < b^*(y)$, thanks to Lemma \ref{sol:FBP1}, and that $w''(b^*(y)-;y)=0$, we can also use Lemma \ref{Lemma_ODE_Sol}.(ii) for $\rho=r$, $\ubar{x}=y>0$ and $x^*=b^*(y)$, to conclude that 
\begin{equation} \label{w:concave>y}
w(\cdot;y) \text{ is concave on $(y,b^*(y))$ and convex on $(b^*(y),\infty)$}.
\end{equation}

To complete the proof of \eqref{mainthmcond1sub}, we then examine the following cases: 

{\it Case 1: $y \leq b^*_{r+q}$}.
Using the expression of $b^*_{r+q}$ in \eqref{value_classical}, we can directly verify that $w(x;y)=K_1(y)(e^{\gamma_1 x}-e^{\gamma_2 x})$ is concave on $(0,y)$, which combined with \eqref{w:concave>y} yields that $w(\cdot;y)$ is concave on $(0,y)\cup (y,b^*(y))$. Combining this with $w'(b^*(y);y)=1$ gives $w'(x;y) \geq 1$ on $(0,b^*(y))$.

{\it Case 2: $b^*_{r+q} < y$}.
Using again the expression of $b^*_{r+q}$ in \eqref{value_classical} and the property in \eqref{w:concave>y}, we can directly verify that $w(\cdot;y)$ is concave on $(0,b^*_{r+q}) \cup (y,b^*(y))$ and convex on $(b^*_{r+q},y)$. 
Combining this with the fact that $y \leq y_l$ and Lemma \ref{lemma_yl}, which imply that $w'(b^*_{r+q};y) \geq 1 = w(b^*(y);y)$, we see that $w'(x;y)\geq 1$ on $(0,b^*(y))$. 

\vspace{1mm}
{\it Step 2. Condition \eqref{mainthmcond1sub} for $x\in [b^*(y),\infty)$.} 
This is a straightforward consequence of the construction of $w(\cdot;y)$, which implies that $w'(x;y)=1$ for all $x \in [b^*(y),\infty)$.

\vspace{1mm}
{\it Step 3. Condition \eqref{mainthmcond2sub} for $x\in (0, b^*(y))$.} 
This is a straightforward consequence of the construction of $w(\cdot;y)$, which implies that $g(x;y)=0$ for all $x \in (0, b^*(y))$.

\vspace{1mm}
{\it Step 4. Condition \eqref{mainthmcond2sub} for $x\in [b^*(y),\infty)$.} 
Using the fact that $y < b^*(y)$ and \eqref{FBP_lowyb} we have  
\begin{align*}
g(x;y) = \mu - r 
w(x;y) 
\quad \text{and} \quad
g'(x;y) = - r 
< 0, 
\quad x\in (b^*(y),\infty)
\end{align*}
Combining this with $g(b^*(y);y)=0$, which holds true thanks to $w(x;y) \in \mathcal{C}^2$ at $x=b^*(y)$ and \eqref{FBP_lowya}, 
yields that $g(x;y) \leq 0$ for all $x\in [b^*(y),\infty)$.
\end{proof}

\begin{remark}
Recall that in the beginning of this section, we make the ansatz that $b^*(y)>y$, based on which we formulate the FBP \eqref{FBP_lowya}--\eqref{FBP_lowyd} that gives the value function of the control problem (cf.~Theorem~\ref{thm:sub}). 
Given the results in Lemmata \ref{sol:FBP1} and \ref{lemma_yl}, this is always satisfied in the subcritical regime of low distress levels $y \in (0,y_l]$, since $y_l < y_u$. This makes the above analysis complete.
\end{remark}

Recall from Lemma \ref{lemma_useful_results} that as the ``traditional discount rate" $\rho$ increases in the classical dividend problem with ruin at zero, the optimal dividend barrier decreases so the action region $[b^*_{\rho}, \infty)$ expands. 
This implies that the decision maker  becomes more impatient, must optimally become more proactive, and pay dividends sooner, i.e.~when the surplus reaches a lower threshold. 

Surprisingly, increasing the distress threshold $y$ leads to a statistically higher effective discount rate $r + q I_{\{X^D<y\}}$ in our dual-ruin dividend problem \eqref{vf}, yet Lemma \ref{sol:FBP1} shows that the action region $[b^*(y), \infty)$ shrinks. 
This implies that the decision maker becomes less proactive and should optimally postpone dividend payments, i.e.~paying dividends only when the surplus reaches a higher threshold.
This behaviour can be explained by the fact that a higher dividend barrier $b^*(y)$ allows the surplus process to spend more time in the desirable no-distress region $(y,b^*(y)]$, where discounting is lower. In this way, delaying dividend payments mitigates the expected impact of the higher distress-induced discounting, reversing the usual monotonicity observed in classical dividend problems with constant discounting.

In summary, the above results imply that, contrary to the classical dividend problem with constant discounting, a statistically increasing effective discount rate in our model does not necessarily translate into greater impatience of the decision maker, as measured by earlier dividend payments.
Instead, the optimal policy becomes more conservative, with dividends being postponed to higher surplus levels.
This constitutes a fundamental departure from the classical theory and highlights the qualitatively different role played by our additional occupation-time-based ruin mechanism in singular control problems, compared to simply facing the traditional ruin at zero.
It is also worth noting that such a phenomenon does not arise in optimal stopping problems with a similar occupation-time-based ruin mechanism; see, for instance, \cite{neo_omega2018}.

Finally, observe from \eqref{ylower_def} that even though the action region for the borderline case of distress level $y=y_l$ is a connected interval, i.e. 
$$
\mathcal{D}^{y_l} = [b^*(y_l), \infty), \quad \text{(thanks to the analysis of this section)}
$$
there exists a point $b^*_{r+q} \in \mathcal{W}^{y_l} = (0, b^*(y_l))$, i.e.~in the waiting region, such that 
$$
V'(b^*_{r+q};y) 
= J'(b^*_{r+q};y,D^{b^*(y_l)}) 
= w'(b^*_{r+q};y_l) 
= 1.
$$
This motivates us to conjecture that for $y>y_l$, there could be an additional action region around the point $b^*_{r+q}$.
We prove that this is indeed the case in the following section.

\subsection{Critical regime} \label{sec_critical_regime}

In this regime, we consider intermediate distress thresholds $y$, and prove that the optimal strategy exhibits a genuinely new structure, with disconnected action and inaction regions. 
This is the most interesting and novel behaviour, emerging due to the interplay among the occupation time penalisation, ruin at zero, and surplus dynamics. This regime marks a qualitative departure from the usual known results in singular control and risk theory.
For $y \in (y_l, y_u)$, we conjecture that the action region is disconnected and takes the form 
\begin{align*}
\mathcal{D}^{y} = [a(y),\underline{b}(y)] \cup [\overline{b}(y), \infty),
\end{align*}
for some $a(y), \underline{b}(y)$ and $\overline{b}(y)$ to be found. 

Using our observation at the end of Section \ref{sec_subcritical} (Subcritical regime) that  
$$
w'(x;y_l) = 1 
\quad \text{for} \quad 
x \in \{b^*_{r+q}\} \cup [b^*(y_l), \infty), 
\quad \text{such that} \quad 
b^*_{r+q} < y_l < b^*(y_l), 
$$ 
we conjecture that the additional component $[a(y), \underline{b}(y)]$ of the action region contains the point $\{b^*_{r+q}\}$ and that for (at least sufficiently small) $y \in (y_l,y_u)$ we have  $\underline{b}(y) < y < \overline{b}(y)$.

Observing also that, if the process $X_{t_0} \in [0, \underline{b}(y)]$ for some $t_0 \geq 0$, then it will never exit this interval again, i.e.~$0 \leq X_t \leq \underline{b}(y) < y$, for all $t \geq t_0$. 
This implies that the decision maker will discount with the constant rate $r+q$ for all $t \geq t_0$. 
In such a case, the optimal strategy was obtained in Theorem \ref{theorem_optimal_yupper} (cf.~Section \ref{sec:supercritical}). 
Hence, the part $[0, \underline{b}(y)]$ of the state space should partition into the action region $[b^*_{r+q}, \underline{b}(y)]$ and the waiting region $[0, b^*_{r+q})$, and the value function should be therefore given by  
$$
V(X_t;y) = J(X_t;y,D^{b^*_{r+q}})=V_{r+q}(X_t), \quad \text{for all } t \geq t_0.
$$ 
We can thus conclude that the conjectured action region should satisfy $a(y) = b^*_{r+q}$, which fixes the lower threshold of our candidate for the optimal strategy. 

Hence, for any $y \in (y_l, y_u)$, we construct our candidate as the triplet $(w(\cdot;y), \underline{b}^*(y), \overline{b}^*(y))$ for the value of this strategy and the two free optimal dividend boundary functions, respectively, under the ansatz that $b^*_{r+q} < \underline{b}^*(y) < y < \overline{b}^*(y)$ and 
\begin{equation} \label{ac_wait_medy}
\mathcal{W}^y = \big(0,b^*_{r+q} \big) \cup \big(\underline{b}^*(y), \overline{b}^*(y) \big) 
\quad \text{and} \quad 
\mathcal{D}^y = \big[b^*_{r+q},  \underline{b}^*(y) \big] \cup \big[\overline{b}^*(y), \infty \big) , 
\quad y \in (y_l, y_u),
\end{equation}
by solving the following associated FBP 
\begin{align}
& w(x;y) = V_{r+q}(x), \qquad \qquad \qquad \qquad \qquad \qquad \qquad \qquad x\in (0,\ubar{b}^*(y)]\label{FBP_medya}\\
& \tfrac{\sigma^2}{2} w''(x;y) + \mu w'(x;y) - (r + q I_{\{x< y\}}) w(x;y) = 0, \qquad \, x \in (\ubar{b}^*(y),y) \cup (y,\bar{b}^*(y)) \label{FBP_medyc}\\
& w(x;y) = w(\bar{b}^*(y);y) + x - \bar{b}^*(y), \qquad \qquad \qquad \qquad \quad\;\; x\in [\bar{b}^*(y),\infty) \label{FBP_medyd}\\
& w \in \mathcal{C}^2((0,\ubar{b}^*(y)) \cup (\ubar{b}^*(y),y) \cup (y,\infty)) \cap \mathcal{C}^1(0,\infty) \label{FBP_medye}.
\end{align}
Notice that, at this stage of the analysis, we refrain from assuming the $\mathcal{C}^2$-regularity of the value $w(\cdot;y)$ at $\{y,\ubar{b}^*(y)\}$. 
This is due to the discontinuity of the state-dependent discount rate $x \mapsto r + q I_{\{x< y\}}$ at the distress threshold $y$ (as in Section \ref{sec_subcritical} for the  subcritical regime), 
and the fact that our admissible controls can only decrease the state process \eqref{control_SDE}, while $\ubar{b}^*(y)$ is a lower boundary of the waiting region (i.e.~cannot be a reflecting one). 
Contrary the $C^2$-regularity of $w(\cdot;y)$ at $\{b^*_{r+q}\}$ results directly from the $C^2$-regularity of $V_{r+q}(\cdot)$ from \eqref{value_classical}. Before commencing the analysis, we define the functions

\begin{align} \label{constants_complex}
\begin{split}
e_1(b) 
&:=\frac{(\phi_{r+q}-\phi'_{r+q}V_{r+q})(b)}{(\psi'_{r+q}\phi_{r+q}-\psi_{r+q}\phi'_{r+q})(b)}, 
\quad 
e_2(b) 
:=\frac{(\psi'_{r+q}V_{r+q}-\psi_{r+q})(b)}{(\psi'_{r+q}\phi_{r+q}-\psi_{r+q}\phi'_{r+q})(b)}, \\ 
e_3(b,y) 
&:= \frac{e_1(b)(\phi_r\psi'_{r+q} - \phi'_r\psi_{r+q})(y) + e_2(b)(\phi_r\phi'_{r+q} - \phi'_r\phi_{r+q})(y)}{(\psi'_{r}\phi_{r}-\psi_{r}\phi'_{r})(y)}, \\ 
e_4(b,y) 
&:= \frac{e_1(b)(\psi'_r\psi_{r+q}-\psi_r\psi'_{r+q})(y) + e_2(b)(\psi'_r\phi_{r+q}-\psi_r\phi'_{r+q})(y)}{(\psi'_{r}\phi_{r}-\psi_{r}\phi'_{r})(y)}
, \quad y,b > 0,
\end{split}
\end{align}
for $\psi_\rho$ and $\phi_\rho$ defined by \eqref{def:delta}, which are well-defined since the denominators are all strictly positive.
We now prove in the following result that the FBP \eqref{FBP_medya}--\eqref{FBP_medye} admits a unique solution for all $y\in(y_l,y_u)$. Its proof can be found in Appendix \ref{App_critical}.

\begin{lemma} \label{fbp2_ex}
Recall $V_{r+q}$ and $b^*_{r+q}$ from \eqref{value_classical} for $\rho=r+q$, 
$y_u$ defined in \eqref{yupper_def}, 
the unique solution $y_l$ of \eqref{ylower_eq_charac}, 
and $e_1, \ldots, e_4$ defined by \eqref{constants_complex}. 
For any $y\in(y_l,y_u)$, the free-boundary problem \eqref{FBP_medya}-\eqref{FBP_medye} admits a unique solution given by 
\begin{align} \label{w_sol_double_def}
w(x;y)= 
\begin{cases}
V_{r+q}(x)\quad &x \in [0,\ubar{b}^*(y)),\\
E_1(y) \, e^{\gamma_1(r+q) \, x} + E_2(y) \, e^{\gamma_2(r+q) \, x}, \quad &x \in [\ubar{b}^*(y),y),\\
E_3(y) \, e^{\gamma_1(r) \, x} + E_4(y) \, e^{\gamma_2(r) \, x}, \quad &x \in [y,\bar{b}^*(y)),\\
w(\bar{b}^*(y);y) + x - \bar{b}^*(y), \quad &x\in [\bar{b}^*(y),\infty),
\end{cases}
\end{align} 
with $\gamma_1(\cdot)$, $\gamma_2(\cdot)$ defined by \eqref{gamma_def}, $E_1(y), \ldots, E_4(y)$ are given by 
$E_1(y) = e_1(\ubar{b}^*(y))$, $E_2(y) 
= e_2(\ubar{b}^*(y))$, $E_3(y) = e_3(\ubar{b}^*(y),y)$ and $E_4(y) = e_4(\ubar{b}^*(y),y)$,  
$\ubar{b}^*(y)$ is the unique solution to $H(\ubar{b}^*(y),y) = 1$ on $(b^*_{r+q},y)$, with 
\begin{align} \label{H_eq_midy}
H(b,y):= 
(\gamma_1(r)-\gamma_2(r)) \Big(-\frac{\gamma_2(r)}{\gamma_1(r)}\Big)^{\frac{\gamma_1(r)+\gamma_2(r)}{\gamma_1(r)-\gamma_2(r)}} e_3(b,y)^{\frac{-\gamma_2(r)}{\gamma_1(r)-\gamma_2(r)}}(-e_4(b,y))^{\frac{\gamma_1(r)}{\gamma_1(r)-\gamma_2(r)}},
\end{align}
and 
\begin{align} \label{upperb_solv}
\bar{b}^*(y) 
= \frac{1}{\gamma_1(r)-\gamma_2(r)} \log\left(-\frac{\gamma_2^2(r)e_4(\ubar{b}^*(y),y)}{\gamma_1^2(r)e_3(\ubar{b}^*(y),y)}\right) \, > y .
\end{align}
\end{lemma}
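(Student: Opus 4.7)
The plan is to build the unique solution piecewise and then collapse the matching data into a single scalar equation for the lower free boundary $\ubar{b}^*(y)$. On $(0,\ubar{b}^*(y)]$ the value is prescribed to equal $V_{r+q}$ by \eqref{FBP_medya}, and on $[\bar{b}^*(y),\infty)$ it is the affine function in \eqref{FBP_medyd}. On the two ODE strata $(\ubar{b}^*(y),y)$ and $(y,\bar{b}^*(y))$ the solution must lie in the two-dimensional spaces spanned by $\{\psi_{r+q},\phi_{r+q}\}$ and $\{\psi_r,\phi_r\}$ respectively. This leaves six unknowns, $E_1,\ldots,E_4$ and $\ubar{b}^*(y),\bar{b}^*(y)$, together with six matching conditions: $C^1$-pasting at $\ubar{b}^*(y)$ and at $y$ (four equations prescribed by \eqref{FBP_medye}), plus first- and second-order smooth fit at $\bar{b}^*(y)$ forced by the $C^2$-requirement on $(y,\infty)$.

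The first step is to solve the two linear $2\times 2$ systems coming from the $C^1$-pasting. At $\ubar{b}^*(y)$, using $V_{r+q}'(\ubar{b}^*(y))=1$ (valid since $\ubar{b}^*(y)>b^*_{r+q}$) and the non-vanishing Wronskian $(\psi_{r+q}'\phi_{r+q}-\psi_{r+q}\phi_{r+q}')(\ubar{b}^*(y))$, Cramer's rule yields $E_1=e_1(\ubar{b}^*(y))$ and $E_2=e_2(\ubar{b}^*(y))$ as in \eqref{constants_complex}. The same procedure at $y$, with non-vanishing Wronskian $(\psi_r'\phi_r-\psi_r\phi_r')(y)$, produces $E_3=e_3(\ubar{b}^*(y),y)$ and $E_4=e_4(\ubar{b}^*(y),y)$. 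Next, the second-order smooth fit $w''(\bar{b}^*(y);y)=0$ reduces, via $\psi_r''=\gamma_1^2(r)\psi_r$ and $\phi_r''=\gamma_2^2(r)\phi_r$, to a single exponential equation solvable explicitly for $\bar{b}^*(y)$, yielding the closed form \eqref{upperb_solv} (well-defined precisely when $E_3$ and $E_4$ have opposite signs). Substituting this expression into the first-order condition $w'(\bar{b}^*(y);y)=1$ and simplifying using \eqref{gamma_def} produces the scalar equation $H(\ubar{b}^*(y),y)=1$ with $H$ as in \eqref{H_eq_midy}. Existence and uniqueness of the FBP solution are therefore equivalent to existence and uniqueness of a root of $H(\cdot,y)=1$ in $(b^*_{r+q},y)$, plus the sign condition $E_3>0>E_4$ guaranteeing a real $\bar{b}^*(y)>y$ in \eqref{upperb_solv}.

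The main obstacle is this last scalar problem, for which I plan a boundary-and-monotonicity analysis of $b\mapsto H(b,y)$ on $(b^*_{r+q},y)$ for $y\in(y_l,y_u)$. Existence will be established by the intermediate value theorem once I show $H(b^*_{r+q}+,y)>1>H(y-,y)$. At the left endpoint, the construction degenerates into the supercritical-regime candidate of Theorem \ref{theorem_optimal_yupper}, and the strict inequality in \eqref{Wy} corresponding to $y<y_u$ should deliver $H(b^*_{r+q}+,y)>1$; at the right endpoint $b\to y-$, the middle ODE stratum collapses and the construction reduces to the subcritical ansatz of Lemma \ref{sol:FBP1}, so $y>y_l$ combined with \eqref{ylower_def} should give $H(y-,y)<1$. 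Uniqueness will follow from strict monotonicity of $b\mapsto H(b,y)$, which I expect to extract by differentiating through the exponents in \eqref{H_eq_midy} and exploiting the fixed signs of the Wronskians appearing in the definition \eqref{constants_complex} of $e_1,\ldots,e_4$. Once $\ubar{b}^*(y)$ is produced, the remaining inequalities $E_3>0>E_4$ and $\bar{b}^*(y)>y$ follow by direct substitution into \eqref{constants_complex} and \eqref{upperb_solv}, completing the proof.
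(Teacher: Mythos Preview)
Your overall architecture matches the paper's proof exactly: solve the four $C^1$-matching conditions for $E_1,\ldots,E_4$ in terms of a generic $\ubar b$, use the second-order condition at $\bar b$ to express $\bar b$ explicitly via \eqref{upperb_solv}, and reduce the remaining first-order condition to the scalar equation $H(\ubar b,y)=1$; then establish existence by the intermediate value theorem and uniqueness by monotonicity of $b\mapsto H(b,y)$. Two points, however, need correction.

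First, your attribution of the two endpoints to the other regimes is \emph{reversed}. At $b=b^*_{r+q}$ one finds $e_1(b^*_{r+q})=-e_2(b^*_{r+q})$ equal to the constant in \eqref{value_classical}, so the middle piece $E_1\psi_{r+q}+E_2\phi_{r+q}$ coincides with the ODE part of $V_{r+q}$ extended past $b^*_{r+q}$; in other words the construction collapses to the \emph{subcritical} ansatz \eqref{FBP_lowya}--\eqref{FBP_lowyd}, not the supercritical one. Consequently $H(b^*_{r+q},y)>1$ is tied to $y>y_l$ via Corollary \ref{help_corr} and the monotonicity of $K_1$ in Lemma \ref{lemma:yl}(vi), not to $y<y_u$ and \eqref{Wy}. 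Conversely, at $b=y$ the middle $(r+q)$-stratum disappears and one is pasting the rate-$r$ ODE directly onto $V_{r+q}(y)$; the relevant identity is $V_{r+q}(y_u)=\mu/r$ coming from the definition \eqref{yupper_def} of $y_u$, which yields $H(y_u,y_u)=1$, and then $H(y,y)<1$ for $y<y_u$. The reference \eqref{ylower_def} concerns $w'(b^*_{r+q};y)$ in the subcritical candidate and does not speak to $H(y,y)$. The paper actually handles both endpoints by first computing $H$ at the corners $(b^*_{r+q},y_l)$ and $(y_u,y_u)$ and then proving monotonicity of $y\mapsto H(b^*_{r+q},y)$ and $y\mapsto H(y,y)$ (Lemma \ref{lemma_steps2dim}); your direct-evaluation idea is also viable once the regime identification is corrected.

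Second, the sign condition $e_3>0>e_4$ is not an afterthought: it is a logical prerequisite for $H$ in \eqref{H_eq_midy} to be well-defined (fractional powers) and for \eqref{upperb_solv} to be real, so it must be established for all $(b,y)\in[b^*_{r+q},y]\times[y_l,y_u]$ \emph{before} the intermediate value argument. The paper isolates this as Lemma \ref{lemma_c_sign}, and the inequality $\bar b^*(y)>y$ requires its own argument (Lemma \ref{lemma_steps2dim_2}(iii)--(iv)), which does not fall out ``by direct substitution.''
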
 

It is worth noting that there is no explicit dependence of the functions $E_1(y)$ and $E_2(y)$ on the distress level $y$ -- their dependence on $y$ is only via the boundary $\ubar{b}^*(y)$.
Moreover, note that the upper boundary function $\bar{b}^*(y)$ is given explicitly in \eqref{upperb_solv}, whereas the lower one $\ubar{b}^*(y)$ is completely characterised as the unique solution to the equation $H(\ubar{b}^*(y),y) = 1$ (cf.~\eqref{H_eq_midy}).
All these properties result from our tailored proof technique (see Appendix \ref{App_critical}) for finding the unique solution to the system of 6 equations and 6 unknowns implied by the FBP \eqref{FBP_medya}--\eqref{FBP_medye}. 
In particular, our proof involves: 
(a) solving an auxiliary boundary value problem (with 4 equations and 4 unknowns $E_1, \ldots, E_4$) for an arbitrary and fixed pair $(\ubar{b},y)$ satisfying $\ubar{b} < y$, to compute the functions $E_1, \ldots, E_4$ in terms of $e_i$ in \eqref{constants_complex} and the arbitrary pair $(\ubar{b},y)$; 
(b) choosing $\ubar{b} = \underline{b}^*(y)$ and finding $\bar{b}^*(y)$ simultaneously, as the unique solution to a highly non-linear two-dimensional system of two equations;
(c) showing that this system is separable, i.e.~we can express $\bar{b}^*(y)$ as a function of $\underline{b}^*(y)$, and finally solve a single equation for $\underline{b}^*(y)$.

Given the existence of the solution $w(\cdot;y)$ to the FBP \eqref{FBP_medya}--\eqref{FBP_medye} in Lemma \ref{fbp2_ex}, we are now in position to prove that $w(\cdot;y)$ identifies with the control problem's value function when we are in the critical regime of intermediate distress levels $y \in (y_l,y_u)$. 
We also explicitly construct the optimal control strategy corresponding to the waiting and action region defined in \eqref{ac_wait_medy}
(cf.~middle panels in Figure \ref{fig:1}).

\begin{theorem} \label{thm:sub}
Recall the unique solution $(w(\cdot;y), \ubar{b}^*(y), \bar{b}^*(y))$ of the free-boundary problem \eqref{FBP_medya}--\eqref{FBP_medye} obtained in Lemma \ref{fbp2_ex}, the unique solution $y_l$ of the equation \eqref{ylower_def} in Lemma \ref{lemma_yl} and the value $y_u$ defined by \eqref{yupper_def}. 
Then, for any $y\in(y_l,y_u)$, we have that the value function of the singular control problem \eqref{vf} is given by $V(x;y) = w(x;y)$ for all $x\geq 0$ and the control process $D^{b_{r+q}^*,\ubar{b}^*(y),\bar{b}^*(y)}$ defined by 
\begin{align*}
D^{b_{r+q}^*,\ubar{b}^*(y),\bar{b}^*(y)}_t 
&:= (x-\bar{b}^*(y)) I_{\{x>\bar{b}^*(y)\}} 
+ (x-b^*_{r+q}) I_{\{x \in (b^*_{r+q}, \ubar{b}^*(y))\}} + (\ubar{b}^*(y) - b_{r+q}^*) I_{\{t\geq \zeta_{\ubar{b}^*(y)}\}} \\ 
&\quad + L^{b_{r+q}^*}_t\big(X^{D^{b^*_{r+q},\ubar{b}^*(y),\bar{b}^*(y)}}\big) 
+ L^{\bar{b}^*(y)}_t\big(X^{D^{
b^*_{r+q},\ubar{b}^*(y),\bar{b}^*(y)}}\big),
\quad t \geq 0,
\end{align*}
is admissible and optimal, where we also define
$$
\zeta_{\ubar{b}^*(y)}
:= 
\big\{ t \geq 0 \,:\, X^{D^{b_{r+q}^*,\ubar{b}^*(y),\bar{b}^*(y)}}_t = \ubar{b}^*(y) \big\}.
$$
\end{theorem}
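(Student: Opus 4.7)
The plan is to deploy the verification Theorem \ref{verification_theorem} with the choice $b=\ubar{b}^*(y)$, since by construction (and Lemma \ref{fbp2_ex}) $w(\cdot;y)\in \mathcal{C}^2\bigl((0,\ubar{b}^*(y))\cup(\ubar{b}^*(y),y)\cup(y,\infty)\bigr)\cap\mathcal{C}^1(0,\infty)$ and $0<\ubar{b}^*(y)<y$. The argument has three stages: (a) admissibility of $D^*:=D^{b_{r+q}^*,\ubar{b}^*(y),\bar{b}^*(y)}$; (b) identification $J(\cdot;y,D^*)=w(\cdot;y)$; and (c) verification of conditions \eqref{mainthmcond1}--\eqref{mainthmcond2}. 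For (a), I would refer to the construction in Appendix \ref{sec:construction}: the initial jump brings $x$ either to $\bar{b}^*(y)$ (if $x>\bar{b}^*(y)$) or to $b^*_{r+q}$ (if $x\in(b^*_{r+q},\ubar{b}^*(y))$); from then on the process is Skorokhod-reflected downward at $\bar{b}^*(y)$ as long as it remains above $\ubar{b}^*(y)$; upon the first time $\zeta_{\ubar{b}^*(y)}$ the surplus touches $\ubar{b}^*(y)$ from above, a lump-sum dividend of size $\ubar{b}^*(y)-b^*_{r+q}$ is paid, after which the surplus is reflected downward at $b^*_{r+q}$ forever. Each jump is by construction smaller than the current surplus, which gives $D^*\in\A$.

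For (b), I would apply the generalised It\^{o} formula (as in the proof of Theorem \ref{verification_theorem}) to the process $e^{-rt-\omega_t^y}w(X^{D^*}_t;y)$ stopped at $\tau_n\wedge\tau_0^{D^*}$. In the interior of each of the three smoothness pieces the FBP equations \eqref{FBP_medya}--\eqref{FBP_medyd} make the drift term vanish; the local-time terms at $b^*_{r+q}$ and $\bar{b}^*(y)$ vanish because $w''(b^*_{r+q}\pm;y)=0$ and $w''(\bar{b}^*(y)-;y)=0$ by the smooth-fit conditions; the first-order local time contributions appearing in the generalised formula at $\ubar{b}^*(y)$ and $y$ cancel by $\mathcal{C}^1$-regularity. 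The lump-sum at $\zeta_{\ubar{b}^*(y)}$ contributes exactly $\ubar{b}^*(y)-b^*_{r+q}$ on the dividend side, matching $w(\ubar{b}^*(y);y)-w(b^*_{r+q};y)=V_{r+q}(\ubar{b}^*(y))-V_{r+q}(b^*_{r+q})$ since $V_{r+q}'\equiv 1$ on $[b^*_{r+q},\ubar{b}^*(y)]$. Passing $n\to\infty$ via dominated/monotone convergence (using linear growth of $w$ and the exponential discount) yields $w(x;y)=J(x;y,D^*)$.

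For (c), I would split into the five natural sub-regions. On $(0,b^*_{r+q}]$, $w\equiv V_{r+q}$ is concave with $V_{r+q}'(b^*_{r+q})=1$, so \eqref{mainthmcond1} holds; and \eqref{mainthmcond2} holds with equality since $\mathcal{L}V_{r+q}-(r+q)V_{r+q}=0$. On $[b^*_{r+q},\ubar{b}^*(y)]$, $w'\equiv 1$ and $\mathcal{L}w-(r+q)w=\mu-(r+q)V_{r+q}(x)=-(r+q)(x-b^*_{r+q})\leq 0$ using $V_{r+q}(b^*_{r+q})=\mu/(r+q)$. On $(y,\bar{b}^*(y))$, the smooth-fit identity $w''(\bar{b}^*(y)-;y)=0$ and Lemma \ref{Lemma_ODE_Sol}(ii) yield concavity, so $w'\geq w'(\bar{b}^*(y)-;y)=1$, while \eqref{mainthmcond2} follows from the FBP. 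On $[\bar{b}^*(y),\infty)$, $w'\equiv 1$ and $g(x;y)=\mu-rw(x;y)$ is decreasing in $x$ with $g(\bar{b}^*(y);y)=0$ by the FBP together with $w''(\bar{b}^*(y)-;y)=0$. The conditions at the exceptional points $\{\ubar{b}^*(y),y\}$ are automatic thanks to the indicator in \eqref{mainthmcond2} and positivity of $w$.

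The genuine obstacle is verifying \eqref{mainthmcond1} on the middle interval $(\ubar{b}^*(y),y)$, where neither the classical form of $V_{r+q}$ nor the reflection-induced concavity is available. My plan is to observe that $w'(\ubar{b}^*(y)+;y)=1$ (by $\mathcal{C}^1$-fit with the linear piece) and that, from the FBP,
\[
\tfrac{\sigma^2}{2}w''(\ubar{b}^*(y)+;y)=(r+q)V_{r+q}(\ubar{b}^*(y))-\mu=(r+q)\bigl(\ubar{b}^*(y)-b^*_{r+q}\bigr)>0,
\]
so $w$ is strictly convex at the left endpoint. Then I would argue by contradiction: if $w'$ dropped below $1$ anywhere on $(\ubar{b}^*(y),y)$, there would exist a first point $x_1$ with $w'(x_1;y)=1$ and $w''(x_1;y)\leq 0$; the FBP at $x_1$ would force $w(x_1;y)\leq \mu/(r+q)=V_{r+q}(b^*_{r+q})$, contradicting that $w$ is strictly increasing from $w(\ubar{b}^*(y);y)=V_{r+q}(\ubar{b}^*(y))>\mu/(r+q)$ up to $x_1$. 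This monotone-increase on $(\ubar{b}^*(y),x_1)$ is itself obtained from Lemma \ref{Lemma_ODE_Sol}(i) applied with initial data $z_0=V_{r+q}(\ubar{b}^*(y))>0$, $z_1=1>0$, and gives the required $w'\geq 1$ across the entire distress band, completing the application of Theorem \ref{verification_theorem}.
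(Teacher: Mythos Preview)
Your overall architecture---admissibility, identification $J(\cdot;y,D^*)=w(\cdot;y)$, then verification via Theorem~\ref{verification_theorem}---matches the paper exactly, and your treatment of conditions \eqref{mainthmcond1}--\eqref{mainthmcond2} on $(0,\ubar{b}^*(y)]$, $(y,\bar{b}^*(y))$ and $[\bar{b}^*(y),\infty)$ is essentially the paper's. Two points deserve comment.

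\textbf{Identification step.} Your global It\^{o} argument works, but the justification is garbled in one place and incomplete in another. First, the statement that ``the local-time terms at $b^*_{r+q}$ and $\bar{b}^*(y)$ vanish because $w''=0$'' is not the right mechanism: since $w\in\mathcal{C}^1$, the generalised It\^{o} formula produces no local-time correction from the function itself; what matters is that $dD^{*,c}$ is supported on $\{X^{D^*}=b^*_{r+q}\}\cup\{X^{D^*}=\bar{b}^*(y)\}$ where $w'=1$, so $\int w'(X^{D^*}_t)\,dD^{*,c}_t=\int dD^{*,c}_t$. Second, on the lower action interval $(b^*_{r+q},\ubar{b}^*(y))$ the drift $\mathcal{L}w-(r+q)w=\mu-(r+q)V_{r+q}$ does \emph{not} vanish, so your claim that ``the FBP makes the drift term vanish in each piece'' is false there; the reason the drift integral still vanishes is that $X^{D^*}$ spends zero Lebesgue time in that open interval (it jumps across it). The paper sidesteps both issues by doing the identification piecewise in the starting point $x$ and gluing via the strong Markov property at $\tau_{\ubar{b}^*(y)}^{D^*}$, which is cleaner.

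\textbf{Condition \eqref{mainthmcond1} on $(\ubar{b}^*(y),y)$.} Your contradiction argument is valid, but the paper's route is shorter: from $w''(\ubar{b}^*(y)+;y)>0$ and Lemma~\ref{Lemma_ODE_Sol}(ii) (any inflection point must be concave-to-convex), $w$ has no inflection point on $(\ubar{b}^*(y),y)$ and is therefore convex throughout; combined with $w'(\ubar{b}^*(y);y)=1$ this gives $w'\geq 1$ directly. Your argument rederives this via monotonicity and the ODE at a hypothetical return point, which is correct but more circuitous.
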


\begin{proof}
We first note that $D^{b_{r+q}^*,\ubar{b}^*(y),\bar{b}^*(y)}$ is admissible, i.e.~$D^{b_{r+q}^*,\ubar{b}^*(y),\bar{b}^*(y)} \in \mathcal{A}$; see Appendix \ref{scorohod_sec_jump} for its construction and further details. 
The remainder of the proof is split into the following two main steps. 

\vspace{1mm}
{\it Step 1. Proof that the expected reward $J(x;y,D^{b_{r+q}^*,\ubar{b}^*(y),\bar{b}^*(y)})$ associated to $D^{b_{r+q}^*,\ubar{b}^*(y),\bar{b}^*(y)}$ equals the solution $w(x;y)$ to the FBP \eqref{FBP_medya}--\eqref{FBP_medye}}.
For ease of notation, we denote by $\bar{D} = D^{b_{r+q}^*\ubar{b}^*(y),\bar{b}^*(y)}$ and we prove the required result by considering separately the following cases:

\vspace{1mm}
{\it Step 1(a). Suppose that $x=0$}.
In this case, it is trivially seen that $w(0;y)=0=J(0;\bar{D})$ by their definitions. 

\vspace{1mm}
{\it Step 1(b). Suppose that $x\in (0,b^*_{r+q}]$}. 
We firstly see from \eqref{FBP_medya} and \eqref{FBP_medye} that $w(x;y)=V_{r+q}(x)$ and $w(\cdot;y) \in  \mathcal{C}^2(0,\ubar{b}^*(y))$. 
An application of It\^{o}'s formula for semi-martingales \cite[p.~74]{Peskir} thus gives 
\begin{align*}
e^{- (r+q) (t \wedge \tau_0^{\bar{D}})} w(X^{\bar{D}}_{t\wedge\tau_0^{\bar{D}}};y) - w(x;y)
&= \int_0^{t\wedge \tau_0^{\bar{D}}} e^{-(r+q)s} (\mathcal{L}-(r+q)) w(X^{\bar D}_s;y)ds \\
&\quad + \int_0^{t\wedge \tau_0^{\bar D}} e^{-(r+q)s} w'(X^{\bar D}_s;y) (dW_s - d\bar D_s).
\end{align*}
Recall from its construction (cf.~Appendix \ref{scorohod_sec_jump}) that the process $X^{\bar D}$ started from $x\in (0,b^*_{r+q}]$ is reflected downwards at $b^*_{r+q}$, such that $X^{\bar D}_t \leq b^*_{r+q}$ for all $t\geq 0$, hence $\bar D_t = L^{b^*_{r+q}}_t(X^{\bar D})$ for all $t\geq 0$, $\mathbb{P}$-a.s.. 
Combining this with the ODE \eqref{FBP_single_tresholda}, which implies that the first integral is equal to zero, we obtain by taking expectations that 
\begin{align}
w(x;y) 
&= \E_x \bigg[\int_0^{t \wedge \tau^{\bar D}_0} e^{-(r+q)s} w'(X^{\bar D}_s;y) dL^{b^*_{r+q}}_s(X^{\bar D}) \bigg] 
+ \E_x\left[ e^{-(r+q) (t\wedge\tau_0^{\bar D})} w(X^{\bar D}_{t\wedge\tau_0^D};y) \right] \notag\\ 
&= \E_x \bigg[ \int_0^{t\wedge\tau^{\bar D}_0} e^{-(r+q)s} dL^{b^*_{r+q}}_s(X^{\bar D}) \bigg] 
+ \E_x\left[ e^{-(r+q) (t\wedge\tau_0^{\bar D})} w(X^{\bar D}_{t\wedge\tau_0^D};y) \right], 
\label{wLb}
\end{align}
where we used that $I_{\{X^{\bar D}_t \neq b^*_{r+q}\}} dL^{b^*_{r+q}}_t(X^{\bar D}) \equiv 0$ and $w'(b^*_{r+q}) = 1$. 
Then, we observe that 
$$
\lim_{t\to\infty} \E_x \left[e^{-(r+q)(t\wedge\tau_0^{\bar D})} w(X^{\bar D}_{t\wedge\tau_0^{\bar D}};y) \right] = 0,
$$ 
thanks to the dominated convergence theorem since we have $X^{\bar D}_t\leq b^*_{r+q}$ for all $t\geq 0$ and $w(X^{\bar D}_{\tau_0^{\bar D}})I_{\{\tau^{\bar D}_0 < \infty)\}}=0$.
Thus, taking the limits as $t\to \infty$ in \eqref{wLb}, we get by the monotone convergence theorem that
\begin{align*}
w(x;y) = 
\E_x \bigg[ \int_0^{\tau^{\bar D}_0} e^{-(r+q)s} dL^{b^*_{r+q}}_s(X^{\bar D}) \bigg] 
=J(x;\bar D).
\end{align*}

\vspace{1mm}
{\it Step 1(c). Suppose that $x\in (b_{r+q}^*,\ubar{b}^*(y)]$}. 
In this case, the definition of $\bar D$ and Step 1(b) imply that
\begin{align*}
J(x;\bar D) 
= x -b_{r+q}^* + J(b_{r+q}^*;\bar D) 
= x -b_{r+q}^* + w(b_{r+q}^*;y) 
= w(x;y).
\end{align*}

\vspace{1mm}
{\it Step 1(d). Suppose that $x\in (\ubar{b}^*(y),\bar{b}^*(y)]$}. 
Repeating the same arguments as in Step 1(b) with $\tau_{\ubar{b}^*(y)}^{\bar D}$ instead of $\tau_0^{\bar D}$ and the discounting $\lambda(t):=\int_0^{t} (r + qI_{\{X^D_s<y\}}) ds$, $t \geq 0$, we get 
\begin{align}
w(x)=\E_x\left(\int_0^{\tau_{\ubar{b}^*(y)}^{\bar D}} e^{-\lambda(t)}dL_t^{\bar{b}^*(y)}+ e^{-\lambda(\tau_{\ubar{b}^*(y)}^{\bar D})}w(X_{\tau_{\ubar{b}^*(y)}^{\bar D}})\right).
\end{align}
Using the previous cases as well as the Markov property, we get that
\begin{align*}
    w(x)&=\E_x\left(\int_0^{\tau_{\ubar{b}^*(y)}^{\bar D}}e^{-\lambda(t)}dL_t^{\bar{b}^*(y)}+e^{-\lambda(\tau_{\ubar{b}^*(y)}^{\bar D})}J(X_{\tau_{\ubar{b}^*(y)}^{\bar D}})\right)\\
    &=\E_x\left(\int_0^{\tau_{\ubar{b}^*(y)}^{\bar D}}e^{-\lambda(t)}dD_t+e^{-\lambda(\tau_{\ubar{b}^*(y)}^{\bar D})}J(X_{\tau_{\ubar{b}^*(y)}^{\bar D}})\right)=J(x).
\end{align*}

\vspace{1mm}
{\it Step 1(e). Suppose that $x\in (\bar{b}^*(y),\infty)$.} 
This is analogous to Step 1.(c). 

\vspace{1mm}
{\it Step 2. Verification}.
Thanks to Step 1, we have that $J(\cdot;y,D^{b_{r+q}^*,\ubar{b}^*(y),\bar{b}^*(y)}) \in \mathcal{C}^2((0,\ubar{b}^*(y)) \cup (\ubar{b}^*(y)),y) \cup (y,\infty)) \cap \mathcal{C}^1(0,\infty)$.
Therefore, in order to apply Theorem \ref{verification_theorem}, it remains to prove that $J(\cdot;y,D^{b_{r+q}^*,\ubar{b}^*(y),\bar{b}^*(y)}) = w(\cdot;y)$ satisfies conditions \eqref{mainthmcond1}-\eqref{mainthmcond2}, which take the form
\begin{align}
w'(x;y) &\geq 1, \quad x\in (0,\infty), \label{mainthmcond1cr}  \tag{I'} \\
g(x;y) := \mathcal{L} w(x;y) \, I_{\{x\not=\ubar{b}^*(y),y\}} -\big(r + q \, I_{\{x<y\}} \big) \, w(x;y) &\leq 0, \quad x\in (0,\infty). \label{mainthmcond2cr}  \tag{II'} 
\end{align}
The proof is split in the following four steps.

\vspace{1mm}
{\it Step 2(a). Conditions \eqref{mainthmcond1cr}--\eqref{mainthmcond2cr} for $x\in (0,\ubar{b}^*(y)]$.} 
Since $y>\ubar{b}^*(y)$ and $w(\cdot;y) = V_{r+q}(\cdot)$, we can use the fact that $V_{r+q}$ satisfies the conditions \eqref{mainthmcond1cr}-- \eqref{mainthmcond2cr} (cf.~proof of Theorem \ref{theorem_optimal_yupper}). 
In particular, we have from \eqref{yupper_def} that
\begin{align} \label{wfefeef}
\mathcal{L}V_{r+q}(x) - (r+q) V_{r+q}(x) 
= \mu - (r+q) V_{r+q}(x)
< 0,  
\quad x \in (b^*_{r+q}, \ubar{b}^*(y)] \subset (b^*_{r+q}, y].
\end{align}

\vspace{1mm}
{\it Step 2(b). Condition \eqref{mainthmcond1cr} for $x\in (\ubar{b}^*(y),\bar{b}^*(y))$}. 
Using \eqref{wfefeef}, the ODE  \eqref{FBP_medyc} and the regularity of $w$ from \eqref{FBP_medye}, in particular $1 = V'_{r+q}(\ubar{b}^*(y)) = w'(\ubar{b}^*(y);y)$, we have 
\begin{align} \label{eq_help1}
\tfrac{\sigma^2}{2} w''(\ubar{b}^*(y)+,y) 
= - \mu + (r+q) w(\ubar{b}^*(y);y) 
= - \mu + (r+q) V_{r+q}(\ubar{b}^*(y)) > 0.
\end{align}
Using Lemma \ref{Lemma_ODE_Sol}.(ii), we observe that $w$ cannot have an inflection point in $(\ubar{b}^*(y),y)$, because it is convex around $\ubar{b}^*(y)$ by \eqref{eq_help1}. 
This implies that $w(\cdot;y)$ is convex on $(\ubar{b}^*(y),y)$, which together with $w'(\ubar{b}^*(y),y)=1$ yields that $w'(x,y)\geq 1$ for all $x\in (\ubar{b}^*(y),y]$. 
Then, using the fact that $w''(\bar{b}^*(y)-,y)=0$ thanks to \eqref{FBP_medyd}--\eqref{FBP_medye}, we can conclude again from Lemma \ref{Lemma_ODE_Sol}.(ii) that $w(\cdot;y)$ is concave on $(y,\bar{b}^*(y))$. 
Combining this with $w'(\bar{b}^*(y),y)=1$ yields that $w'(x,y)\geq 1$ for $x \in (y,\bar{b}^*(y))$. 

\vspace{1mm}
{\it Step 2(c). Condition \eqref{mainthmcond2cr} for $x\in (\ubar{b}^*(y),\bar{b}^*(y))$}. 
This is a straightforward consequence of the construction of $w(\cdot;y)$, which implies that $g(x;y)=0$ for all $x \in (\ubar{b}^*(y),\bar{b}^*(y))$.

\vspace{1mm}
{\it Step 2(d). Condition \eqref{mainthmcond1cr} for $x\in [\bar{b}^*(y),\infty)$.} 
This is a straightforward consequence of the construction of $w(\cdot;y)$, which implies that $w'(x;y)=1$ for all $x \in [\bar{b}^*(y),\infty)$.

\vspace{1mm}
{\it Step 2(e). Condition \eqref{mainthmcond2cr} for $x\in [\bar{b}^*(y),\infty)$.} 
Using the fact that $y < \bar{b}^*(y)$ and \eqref{FBP_medyd} we have  
\begin{align*}
g(x;y) = \mu - r w(x;y) 
\quad \text{and} \quad
g'(x;y) = - r < 0, 
\quad x\in (\bar{b}^*(y),\infty)
\end{align*}
Combining this with $g(\bar{b}^*(y);y)=0$, which holds true thanks to $w(x;y) \in \mathcal{C}^2$ at $x=\bar{b}^*(y)$ and \eqref{FBP_medyc}, 
yields that $g(x;y) \leq 0$ for all $x\in [b^*(y),\infty)$.

\end{proof}
We have thus shown that the strategy with two separate action regions in optimal. We conclude the chapter by showing that this strategy does indeed transition to the other cases as $y\downarrow y_l$ or $y\uparrow y_u$. This is shown in the following proposition which is proved in the Appendix \ref{App_critical}.

\begin{proposition} \label{prop_transition}
Let $(\ubar{b}^*(y),\bar{b}^*(y))$ be the optimal pair of boundaries in the critical regime given by Lemma \ref{fbp2_ex} and $b^*(y)$ the optimal boundary in the subcritical regime given by \eqref{def_deltay}. 
Then, we have 
\begin{enumerate}[\rm (i)]
\item $\underline{b}^*(y) - y \to 0 $ and $\, \overline{b}^*(y) -y \to 0$, as $y \uparrow y_u$;
\item $\underline{b}^*(y) \to b^*_{r+q}$ and $\, \overline{b}^*(y) \to b^*(y_l)$, as $y \downarrow y_l$.
\end{enumerate}
\end{proposition}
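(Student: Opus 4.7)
The plan is to pass to the limit in the characterising equations from Lemma~\ref{fbp2_ex}, namely $H(\ubar{b}^*(y),y)=1$ with $H$ defined in \eqref{H_eq_midy}, and the explicit expression \eqref{upperb_solv} for $\bar{b}^*(y)$. Since $\ubar{b}^*(y)\in(b^*_{r+q},y)$ is bounded, compactness yields convergent subsequences, and the uniqueness of solutions to the limiting equations will force identification.

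For part (i), I would show that at the degenerate parameter $y=y_u$ the supercritical value function $V_{r+q}$ already solves a degenerate version of the free-boundary problem \eqref{FBP_medya}--\eqref{FBP_medye} in which $\ubar{b}^*(y_u)=\bar{b}^*(y_u)=y_u$. This is consistent because, by the very definition of $y_u$ in \eqref{yupper_def}, $\mathcal{L}V_{r+q}(y_u)-rV_{r+q}(y_u)=\mu-rV_{r+q}(y_u)=0$, and $V_{r+q}'(y_u)=1$, so both the $r$-harmonic ODE on the (collapsed) middle interval and the smooth-pasting conditions with the linear branch on $[\bar{b}^*(y),\infty)$ hold automatically. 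Substituting $\ubar{b}=y$ into the coefficients $e_3,e_4$ in \eqref{constants_complex} and then into $H$, a direct algebraic verification using \eqref{lemma_useful_results_gamma} and \eqref{lemma_useful_results_Vb} should yield $H(y_u,y_u)=1$. Combined with continuity of $H$ and uniqueness of the solution to $H(\ubar{b}^*(y),y)=1$ in $(b^*_{r+q},y)$, this forces $\ubar{b}^*(y)-y\to 0$ as $y\uparrow y_u$. Feeding this into \eqref{upperb_solv} then yields $\bar{b}^*(y)-y\to 0$.

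For part (ii), I would exploit Corollary~\ref{help_corr}: at $y=y_l$, the subcritical solution satisfies $w(x;y_l)=V_{r+q}(x)$ for $x\in[0,b^*_{r+q}]$ and $w'(b^*_{r+q};y_l)=1$. This precisely corresponds to the collapse of the additional action region $[b^*_{r+q},\ubar{b}^*(y)]$ down to the single point $\{b^*_{r+q}\}$, while the upper reflecting boundary $\bar{b}^*(y)$ converges to the subcritical boundary $b^*(y_l)$. Substituting $\ubar{b}=b^*_{r+q}$ into $e_1,\dots,e_4$, and hence into $H$, an algebraic verification using the defining equation \eqref{ylower_eq_charac} for $y_l$ together with the closed-form expressions for $K_1(y_l),K_2(y_l)$ from Corollary~\ref{help_corr} should yield $H(b^*_{r+q},y_l)=1$. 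Continuity and uniqueness then force $\ubar{b}^*(y)\to b^*_{r+q}$, and plugging this into \eqref{upperb_solv} and matching with \eqref{def_deltay} gives $\bar{b}^*(y)\to b^*(y_l)$.

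The hard part will be the algebraic verification of the two boundary identities $H(y_u,y_u)=1$ and $H(b^*_{r+q},y_l)=1$, which involve the intricate coefficients $e_1,\dots,e_4$ and the implicit definitions of $y_u$ and $y_l$. Once these identities are in hand, the convergences of $\ubar{b}^*(y)$ follow directly from continuity of the defining equations combined with uniqueness of their solutions, and the limiting values of $\bar{b}^*(y)$ drop out of the explicit formula \eqref{upperb_solv}.
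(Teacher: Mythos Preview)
Your proposal is correct and follows essentially the same approach as the paper: compactness of $\ubar{b}^*(y)\in(b^*_{r+q},y)$, continuity of $H$, identification of the limit via uniqueness of the root of $H(\cdot,y)=1$ at the endpoint, and then the explicit formula \eqref{upperb_solv} for $\bar{b}^*(y)$. The two identities you flag as the ``hard part'', namely $H(y_u,y_u)=1$ and $H(b^*_{r+q},y_l)=1$, are already established in Lemma~\ref{lemma_steps2dim}(i)--(ii), and the strict monotonicity of $b\mapsto H(b,y)$ needed to rule out other accumulation points is in Lemma~\ref{lemma_steps2dim_2}, so you can simply cite those rather than redo the algebra.
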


This result illustrates how the optimal dividend boundaries transition across regimes. 
Proposition~\ref{prop_transition}.(ii) shows that, as $y \downarrow y_l$, the upper boundary $\underline{b}^*(y)$ of the lower payout region $[b^*_{r+q}, \underline{b}^*(y)]$ and the lower boundary $\overline{b}^*(y)$ of the upper payout region $[\overline{b}^*(y), \infty)$ converge to the subcritical regime boundaries $b^*_{r+q}$ and $b^*(y_l)$, respectively, reflecting how the lower action region $[b^*_{r+q}, \underline{b}^*(y)]$ shrinks and the control approaches the classical subcritical strategy. Conversely, Proposition~\ref{prop_transition}.(i) shows that as $y \uparrow y_u$, both boundaries $\underline{b}^*(y)$ and $\overline{b}^*(y)$ converge to the distress threshold $y$, so that the two payout regions $[b^*_{r+q}, \underline{b}^*(y)]$ and $[\overline{b}^*(y), \infty)$ merge into a single contiguous action region. 
These results show that the optimal dividend strategy evolves continuously as the distress threshold $y$ changes: the boundaries of the payout regions adjust smoothly, reflecting how the firm balances the timing and size of dividend payments in response to the increasing or decreasing risk of occupation-time-based default.

\begin{figure}
     \centering
     \begin{subfigure}{0.3\textwidth}
         \centering
         \includegraphics[width=\linewidth]{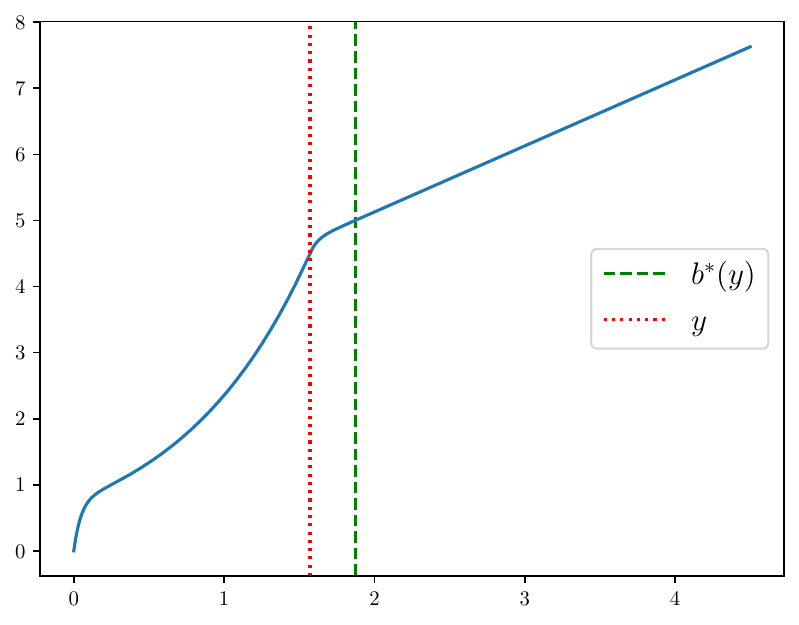}
     \end{subfigure}
     \begin{subfigure}{0.3\textwidth}
         \centering
         \includegraphics[width=\linewidth]{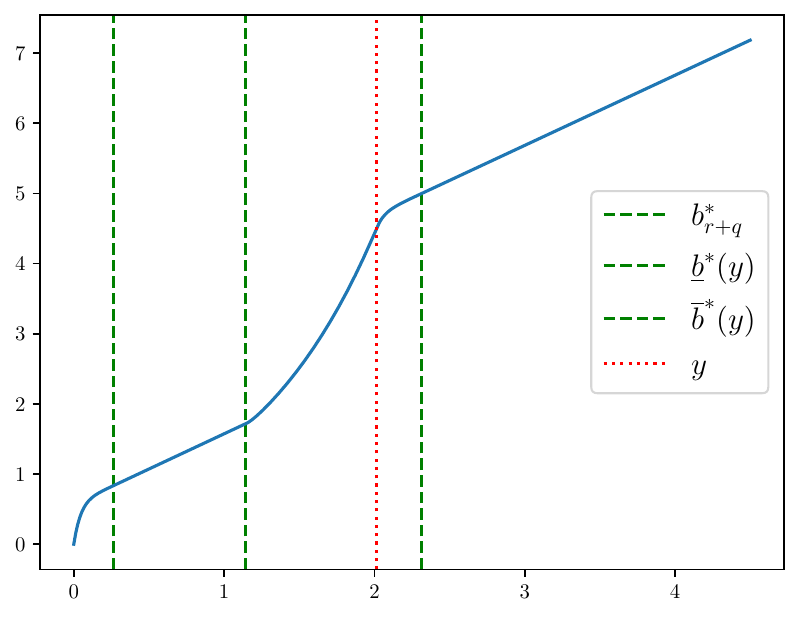}
     \end{subfigure}
     \begin{subfigure}{0.3\textwidth}
         \centering
         \includegraphics[width=\linewidth]{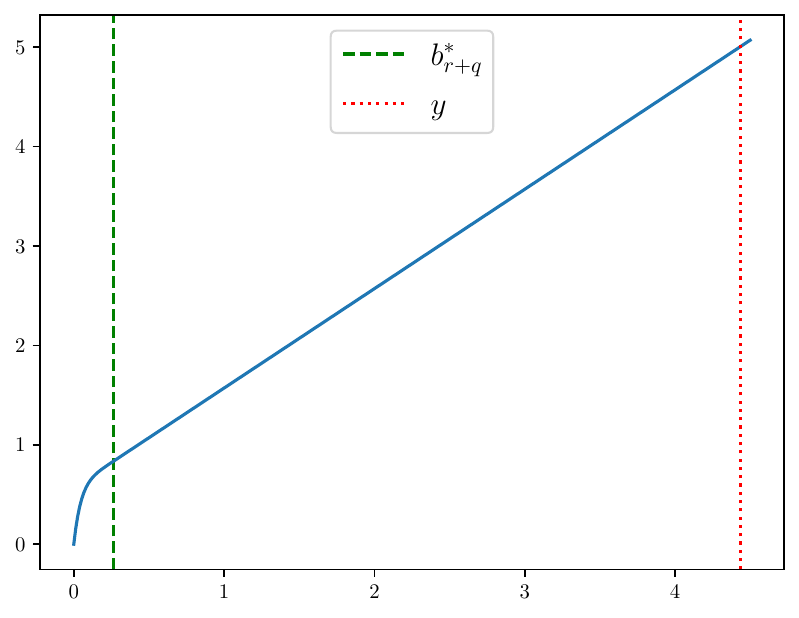}
     \end{subfigure}
     \begin{subfigure}{0.3\textwidth}
         \centering
         \includegraphics[width=\linewidth]{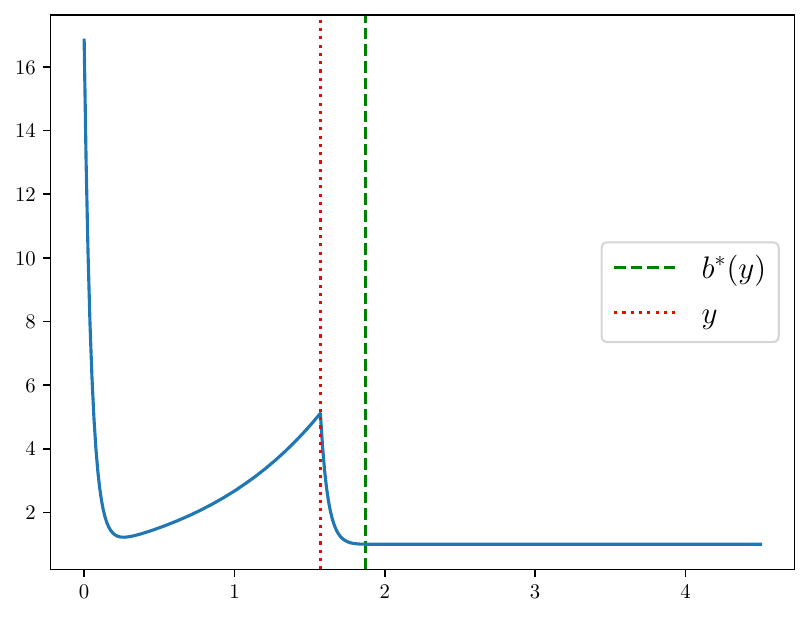}
     \end{subfigure}
     \begin{subfigure}{0.3\textwidth}
         \centering
         \includegraphics[width=\linewidth]{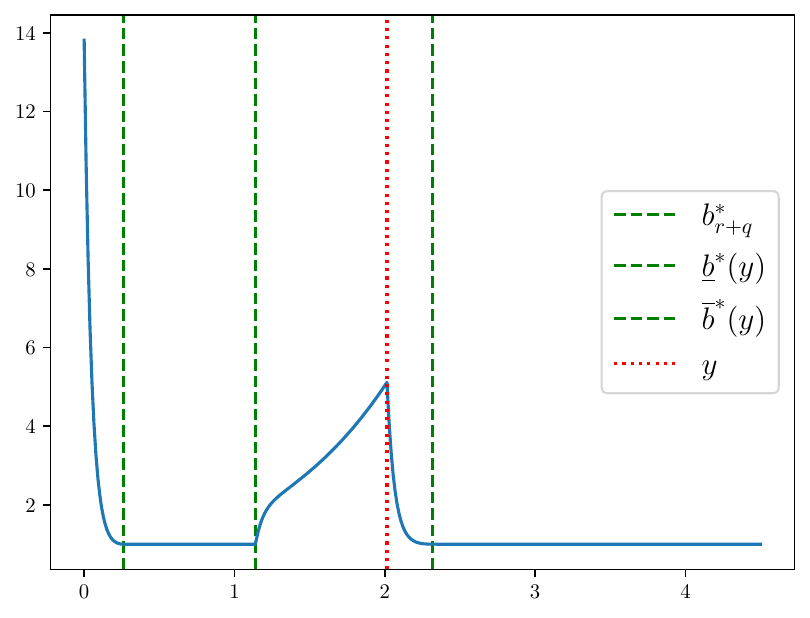}
     \end{subfigure}
     \begin{subfigure}{0.3\textwidth}
         \centering
         \includegraphics[width=\linewidth]{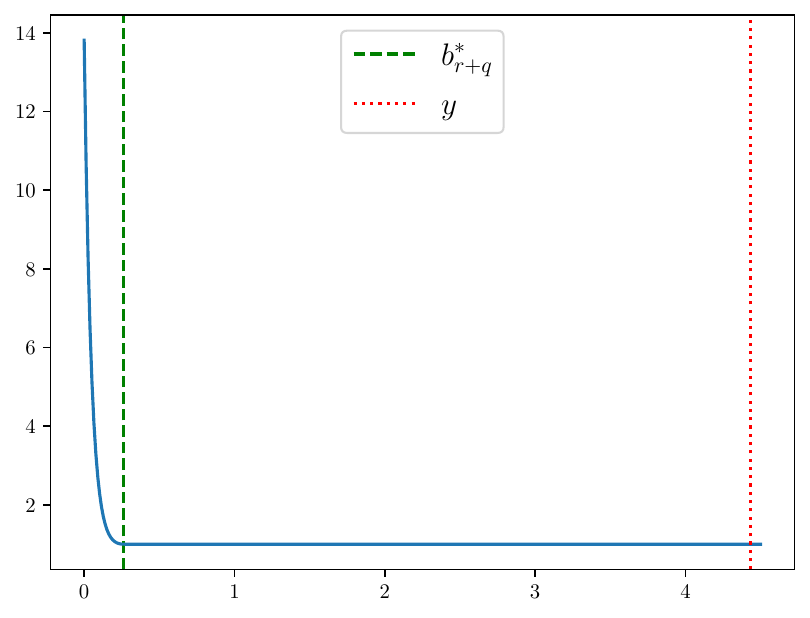}
     \end{subfigure}
          \caption{Value functions $V$ (top) and first derivatives $V'$ (bottom) for the three different regimes and the parameters $\mu =  0.1, \sigma =  0.1, r =  0.02, q =  0.1$. The three regimes result from different values of $y$ and are characterised by the pair $(y_l, y_u) \approx (1.7461, 4.4292)$. 
          On the left, we have $y = y_l\cdot0.9\leq y_l$ and the optimal dividend policy is to pay out dividends above the barrier $b^*(y)\approx 1.8717$. 
          In the middle, we have $y = 0.9 \cdot y_l + 0.1 \cdot y_u \in (y_l,y_u)$ and the optimal dividend policy is to pay out dividends in the disjoint set $[b^*_{r+q},\ubar{b}^*(y)]\cup[\bar{b}^*(y),\infty)\approx[0.2626,1.1394]\cup[ 2.3147,\infty)$. 
          On the right we have $y = y_u \cdot 1.001\geq y_u$ and the optimal dividend policy is to pay out dividends above $b^*_{r+q} \approx0.2626$, as in the classical case with ruin at zero and constant discount rate $r+q$.}
          \label{fig:1}
     \label{fig:1}
\end{figure}

\appendix
\section{Construction of a process with multiple jumps and reflections}
\label{sec:construction}

The aim of this section we recall the well known results of Skorokhod reflection as well as the construction of a process with multiple jump regions and reflections. 

\subsection{Connected action region: Skorokhod reflection strategy}
\label{scorohod_sec}

In this section, we recall the well known result of the Skorokhod reflection problem (see, e.g.~\cite{pilipenko2014introduction, Chitashvili01121981}). 
Given any threshold $b>0$, there exists a unique $\left({\cal F}_t\right)$-adapted pair $(X^{D^b},D^b)$ such that
\begin{align}\label{reflected_eq}
X^{D^b}_t = x + \mu t + \sigma W_t -D^b_t, \quad t\geq 0, \quad X^{D^b}_{0-}=x\geq 0,
\end{align}
and satisfies the following conditions
\begin{align} \label{SRP}
\begin{cases}
X^{D^b}_t\leq b, \quad t\geq 0, \quad \Pr-a.s., \\
D^b_t \;\;\text{is increasing} , \quad t\geq 0, \\
\int_0^t I_{\{X^{D^b}_s<b\}} dD_s^b = 0, \quad t\geq 0.
\end{cases}
\end{align}
By construction, the process $X^{D^b}$ started from $x \leq b$ is thus reflected downwards at $b$, whereas if it starts at $x>b$ it firsts jumps to $b$ (immediately at time $0$) with subsequently reflected at $b$. It can be shown that the process $D^b$ admits the form
\begin{align} \label{single_barrier_control}
D^b_t := (x-b)^+ + L^b_t(X^{D^b}), \quad t\geq 0, \quad D^b_{0-}=0,
\end{align} where $L^b(X^{D^b})$ is the (symmetric) local time of $X^{D^b}$ at the level $b$ defined by the
limit in probability
\begin{align*}
 L^b_t(X^{D^b}):= \lim_{\epsilon \to 0}  \frac{1}{2\epsilon}\int_0^t \sigma^2I_{\{b-\epsilon\leq X^{D^b}_t\leq b+\epsilon\}}dt.
\end{align*}
Hence we see that $D^b$ defined by \eqref{single_barrier_control} is adapted to $(\mathcal{F}_t)$ and thus $D^b$ is admissible, i.e.~$D^b\in \mathcal{A}$ from \eqref{adm}.

\subsection{Disconnected action region: Double barrier strategies}
\label{scorohod_sec_jump}

In this section, we proceed with a generalisation of the traditional Skorokhod reflection strategy (cf.~Section \ref{scorohod_sec}) which features an action region and inaction region with two disconnected components each. 
Such a control strategy $D^{b_1,c_1,b_2}$ involves an action region $[b_1,c_1] \cup [b_2,\infty)$ with two reflection boundaries $b_1,b_2$ for the controlled process $X^{D^{b_1,c_1,b_2}}$, such that $ 0 < b_1 < c_1 < b_2$ and the pair $(X^{D^{b_1,c_1,b_2}},D^{b_1,c_1,b_2})$ satisfies the SDE 
\begin{align} \label{reflected_eq_jump}
\begin{split}
&dX^{D^{b_1,c_1,b_2}}_t = \mu dt + \sigma dW_t - dD^{b_1,c_1,b_2}_t, \quad t\geq 0, \quad X^{D^{b_1,c_1,b_2}}_{0-} = x \geq 0, \quad D^{b_1,c_1,b_2}_{0-} = 0,
\end{split}
\end{align}
and the following conditions
\begin{align} \label{DRP}
\begin{cases}
X^{b_1,c_1,b_2}_t\in (-\infty,b_1]\cup(c_1,b_2], \quad t\geq 0, \quad \Pr-a.s. \\
D^{b_1,c_1,b_2}_t \;\;\text{is increasing} , \quad t\geq 0, \\
\int_0^t I_{\{X^{b_1,c_1,b_2}_t\in (-\infty,b_1)\cup(c_1,b_2)\}} dD_s^{b_1,c_1,b_2} = 0, \quad t\geq 0.
\end{cases}
\end{align}
We remark that a strong solution to \eqref{reflected_eq_jump} satisfying \eqref{DRP} exists, and in particular we have $D^{b_1,c_1,b_2} \in \mathcal{A}$ from \eqref{adm}. 
Its construction can be done in different ways (see e.g.~\cite{junca2024optimal}); however, we construct the process below for completeness.

We take an arbitrary $0<b_1<c_1<b_2$ and wish to construct a process that is reflected downwards at $b_2$ (with a possible jump to $b_2$ at time $0$ if $x>b_2$) until it reaches the interval $(b_1c_1]$, at which time it jumps to $b_1$, and then is subsequently reflected at $b_1$. 
To that end, we consider a concatenation of two SDEs using similar deductions as in \cite[Section 4]{LIU2025128809}. 

Firstly, consider the strong solution $X^{(1)}$ of the SDE (cf.~Section~\ref{scorohod_sec})
\begin{align*}
dX_t^{(1)} 
&= \mu dt + \sigma dW_t - dD^{(1)}_t, \qquad \qquad t\geq 0, \quad X_{0-}^{(1)}=x\geq 0, \\
D^{(1)}_t 
&= (x-b_2)I_{\{x>b_2\}} + L^{b_2}_t(X^{(1)}), \quad t\geq 0,
\quad D^{(1)}_{0-}=0,
\end{align*}
and define $\tau_{c_1}:=\inf\{t\geq 0:X^{(1)}_{t}\leq c_1\}$, which is finite  $\mathbb{P}$-a.s..

Then, define the Brownian motion $\widetilde W_t=W_{\tau_{c_1}+t}-W_{\tau_{c_1}}$ adapted to the filtration $\{\mathcal{F}_{t+\tau_{c_1}}\}_{t\geq 0}$ and consider the strong solution $X^{(2)}$ of the SDE (cf.~Section~\ref{scorohod_sec})
\begin{align*}
dX_t^{(2)} 
&= \mu dt + \sigma d\widetilde{W}_t -dD^{(2)}_t, \qquad \quad\, t\geq 0, \quad X_{0-}^{(2)} = X^{(1)}_{\tau_{c_1}} > 0, \\
D^{(2)}_t 
&= (X^{(1)}_{\tau_{c_1}} - b_1)^+ + L^{b_1}_t(X^{(2)}), \quad t\geq 0, \quad D^{(2)}_{0-}= 0.
\end{align*}

We are now ready to construct the process 
\begin{align*}
X_t &:= X^{(1)}_t I_{\{t<\tau_{c_1}\}} + X^{(2)}_{t-\tau_{c_1}} I_{\{t\geq\tau_{c_1}\}} 
\end{align*}
which satisfies the SDE 
\begin{align*} 
dX_t &= \mu dt + \sigma dW_t - dD^{b_1,c_1,b_2}_t, \quad t\geq 0, \quad X_{0-} = x \geq 0, \\
D^{b_1,c_1,b_2}_t 
&:= D^{(1)}_{t \wedge \tau_{c_1}} + D^{(2)}_{(t-\tau_{c_1})^+}, \qquad \quad\, t\geq 0, \quad D^{b_1,c_1,b_2}_{0-} = 0.
\end{align*}
This identifies with the SDE \eqref{reflected_eq_jump} and satisfies all conditions in \eqref{DRP} as well. 

\section{Proofs of results in Section \ref{chapter_classical_case}}
\label{App_classical}

\begin{proof}[Proof of Lemma 
\ref{lemma_useful_results}] 
We first recall the expression of $b_\rho^*$ in \eqref{value_classical} and observe from \eqref{gamma_def} that the numerator of the expression is positive and the denominator is positive and strictly increasing in $\rho$. 
Therefore, it suffices to show that 
$$
\rho \mapsto h(\rho) := \frac{\gamma_2^2(\rho)}{\gamma_1^2(\rho)} 
\quad \text{is strictly decreasing}. 
$$
Using \eqref{lemma_useful_results_gamma} we can show that $h(\rho) = (\gamma_2(\rho)\frac{\mu}{\rho}-1 )^2$, which gives
\begin{align*}
h'(\rho)
&=2\left(\gamma_2(\rho)\frac{\mu}{\rho}-1\right)\left(\gamma_2'(\rho)\frac{\mu}{\rho}-\gamma_2(\rho)\frac{\mu}{\rho^2}\right) 
=2\frac{\gamma_2(\rho)}{\gamma_1(\rho)}\frac{\mu}{\rho}\left(\gamma_2'(\rho)-\frac{\gamma_2(\rho)}{\rho}\right) < 0.
\end{align*}
To see the latter inequality, recall that $\gamma_1(\rho), \mu, \rho > 0$, $\gamma_2(\rho) < 0$, and by using the expressions in \eqref{gamma_def} that 
\begin{align}
\gamma_2'(\rho)-\frac{\gamma_2(\rho)}{\rho} 
&=-\tfrac{1}{\sigma^2\sqrt{\frac{\mu^2}{\sigma^4}+\frac{2\rho}{\sigma^2}}} + \tfrac{\sqrt{\frac{\mu^2}{\sigma^4}+\frac{2\rho}{\sigma^2}}+\frac{\mu}{\sigma^2}}{\rho}
=\tfrac{\frac{\mu^2}{\sigma^2\rho}+1}{\sigma^2\sqrt{\frac{\mu^2}{\sigma^4}+\frac{2\rho}{\sigma^2}}}+\frac{\mu}{\sigma^2\rho}>0,
\end{align}
which proves the desired statement.
\end{proof}

\section{Proofs of the Section \ref{sec_subcritical} (Subcritical regime)}
\label{App_sec_subcritical}

\begin{proof}[Proof of Lemma \ref{sol:FBP1}]
A fundamental solution to the ODE \eqref{FBP_lowya} is of the form 
\begin{align*}
w(x;y)=\begin{cases}
K_1(y) \, e^{\gamma_1(r+q) \, x} + K_2(y) \, e^{\gamma_2(r+q) \, x}, \quad &x\in (0,y),\\
K_3(y) \, e^{\gamma_1(r) \, x} + K_4(y) \, e^{\gamma_2(r) \, x}, \quad &x\in (y,b^*(y)),
\end{cases}
\end{align*}
for $\gamma_i(\cdot),i=1,2$, defined by \eqref{gamma_def} and some constants $K_i(y),i=1,\dots,4$, determined such that $v$ satisfies the boundary and regularity conditions \eqref{FBP_lowyb}--\eqref{FBP_lowyd}, namely via the system of equations
\begin{align} 
\label{lowy_lineadereq}
w(0+;y) &= 0, \quad 
w(y-;y) = w(y+;y), \quad 
w'(y-;y) = w'(y+;y), \quad 
w'(b^*(y)-;y) = 1.
\end{align}
Solving the above system of four linear equations, we obtain the functions $K_1(y), \ldots, K_4(y)$ defined by \eqref{constants_simple}.
By observing that $\delta'(y), \delta(y), \phi_r(y) >0$, as well as $\phi'_r(y) <0$, for all $y>0$, it can be shown that 
\begin{align*}
&(\delta\phi'_r-\delta'\phi_r)(y)<0, \quad y>0, \\
&(\delta\psi'_r-\delta'\psi_r)(y) = e^{(\gamma_1(r+q)+\gamma_1(r))y} (\gamma_1(r) - \gamma_1(r+q)) - e^{(\gamma_2(r+q)+\gamma_1(r))y} (\gamma_1(r) - \gamma_2(r+q)) < 0, \quad y>0. 
\end{align*}
Using the above inequalities we can therefore see that 
\begin{align*}
\delta(y) \eta'(y; b) - \delta'(y) \eta(y; b)=\psi'_{r}(b)(\delta\phi'_r-\delta'\phi_r)(y)-\phi'_{r}(b)(\delta\psi'_r-\delta'\psi_r)(y)<0, \quad y>0, 
\end{align*}
implying that all the denominators of the constants in \eqref{constants_simple} are non-zero and hence well-defined for all pairs $(y,b) \in (0,\infty)^2$. 

Then, by using the $\mathcal{C}^2$-regularity of $w(\cdot;y)$ at $b^*(y)$ from \eqref{FBP_lowyd} for all $y>0$, we have $w''(b^*(y)-; y) = 0$, which thus yields that $b^*(y)$ is given as a solution to the implicit equation (due to the dependence of $K_3(y)$, $K_4(y)$ on $b^*(y)$)
\begin{align}\label{b_star_simple1}
b^*(y) 
= \frac{1}{\gamma_1(r) - \gamma_2(r)} \log\left(-\frac{K_4 ( y) \gamma_2^2(r)}{K_3( y) \gamma_1^2(r)} \right).
\end{align}
However, straightforward calculations involving the expressions \eqref{constants_simple} of $K_3(y)$ and $K_4(y)$ yield that  
\begin{align}\label{eq:C4divC3}
\hspace{-1mm}
-\frac{K_4(y)}{K_3(y)}
= \frac{\delta\psi'_r - \delta'\psi_r}{\delta \phi'_r - \delta' \phi_r}(y) 
= e^{(\gamma_1(r) - \gamma_2(r))y} \frac{e^{\gamma_1(r+q)y}\bigl(\gamma_1(r+q) - \gamma_1(r)\bigr) + e^{\gamma_2(r+q)y}\bigl(\gamma_1(r) - \gamma_2(r+q)\bigr)}{e^{\gamma_1(r+q)y}\bigl(\gamma_1(r+q) - \gamma_2(r)\bigr) + e^{\gamma_2(r+q)y}\bigl(\gamma_2(r) - \gamma_2(r + q)\bigr)},
\end{align}
which is eventually independent of $b^*(y)$ and positive. 
Therefore, plugging this into \eqref{b_star_simple1} yields the expression of $b^*(y)$ stated in Lemma \ref{sol:FBP1}. 
The remaining assertions that 
$\Delta(y) > 0$ for $y\in (0,y_u]$ and $\frac{d}{dy} b^*(y) > 0$ for all $y > 0$ follow from Lemma \ref{lemma:yl}.(iv). 
\end{proof}

\begin{proof}[Proof of Lemma \ref{lemma_yl}]
We prove the result in two parts. 

\vspace{1mm}
{\it Part} (I). We first aim at proving, the existence of a unique $y_l\in (0,y_u)$, satisfying $y_l\in (b^*_{r+q},y_u)$ and \eqref{ylower_def}. 
To that end, recall the expression of $w'(b^*_{r+q}; y)$ from \eqref{w_sol_simple_def}--\eqref{def_deltay}, which can take two possible forms, depending on whether $b^*_{r+q} < y < b^*(y)$ or $y \leq b^*_{r+q} < b^*(y)$.
Note that the inequality $b^*_{r+q} < b^*(y)$ follows from Lemma \ref{lemma:yl}.(i) for all $0 < y \leq y_u$. 

Then, notice from Lemma \ref{sol:FBP1} that 
$w'(b^*_{r+q}; \cdot) \in \mathcal{C}((0, y_u]) \cap \mathcal{C}^1((0, b^*_{r+q}) \cup (b^*_{r+q}, y_u))$.
Thanks to this, the existence of a unique $y_l \in (0,y_u)$ such that $w'(b^*_{r+q}; y)=1$ follows by showing that $w'(b^*_{r+q}; y) > 1$ for all $y \in (0, b^*_{r+q}]$, and that $y \mapsto w'(b^*_{r+q}; y)$ is strictly decreasing for $y \in (b^*_{r+q}, y_u]$ with $w'(b^*_{r+q}; y_u)<1$ (by invoking the intermediate value theorem). 
These properties are proved in the following three steps.

\vspace{1mm}
{\it Step 1. Proof of $w'(b^*_{r+q}; y) > 1$ for $y \in (0, b^*_{r+q}]$.}
We begin by noting that $w''(x; y) < 0$ for all $x \in (y, b^*(y))$, thanks to 
\begin{align} \label{help_ineq}
w''(x; y) 
= \gamma_1^2(r) K_3(y)e^{\gamma_1(r)x} + \gamma_2^2(r) K_4(y)e^{\gamma_2(r)x}
< 0 
\; \Leftrightarrow \;
H(x):=-\frac{K_4(y)\gamma_2^2(r)}{K_3(y)\gamma_1^2(r)}e^{(\gamma_2(r) - \gamma_1(r))x} > 1.
\end{align}
To see the latter inequality, observe that $H(b^*(y))=1$ and $H'(x) < 0$, since $-{K_4(y)}/{K_3(y)} > 0$ thanks to Lemma \ref{lemma:yl}.(v).
Therefore, $w'(\cdot;y)$ is strictly decreasing on $(y, b^*(y))$, and given that $w'(b^*(y); y) = 1$ thanks to Lemma \ref{sol:FBP1}, we can conclude that $w'(x; y) > 1$ for all $x \in [y, b^*(y))$. 
In particular, using the fact that $b^*_{r+q} \in [y, b^*(y))$ in this case, we get the desired property. 

\vspace{1mm}
{\it Step 2. Proof of $\frac{\partial}{\partial y} w'(b^*_{r+q}; y) < 0$ for $y \in (b^*_{r+q}, y_u]$.}
First recall that by \eqref{value_classical}, Lemma \ref{sol:FBP1} for $b^*_{r+q} < y$, and \eqref{def:delta}, we have
\begin{align}
w'(b^*_{r+q}; y) 
&= \gamma_1(r+q)K_1(y) e^{\gamma_1(r+q)b^*_{r+q}} + \gamma_2(r+q)K_2(y) e^{\gamma_2(r+q)b^*_{r+q}} \notag\\
&= K_1(y) \bigg(\gamma_1(r+q) \left( \frac{\gamma_2^2(r+q)}{\gamma_1^2(r+q)}\right)^{\frac{\gamma_1(r+q)}{\gamma_1(r+q) - \gamma_2(r+q)}}
- \gamma_2(r+q) \left( \frac{\gamma_2^2(r+q)}{\gamma_1^2(r+q)} \right)^{\frac{\gamma_2(r+q)}{\gamma_1(r+q) - \gamma_2(r+q)}} \bigg) 
\notag\\
&= K_1(y) \, \delta'(b^*_{r+q}) 
\label{eq:C_1neu},
\end{align}
where $\delta'(b^*_{r+q}) > 0$ is independent of $y$. 
The property then follows from Lemma \ref{lemma:yl}.(vi).

\vspace{1mm}
{\it Step 3. Proof of $w'(b^*_{r+q};y_u)<1$.}
Substituting $y=y_u$ in \eqref{eq:C_1neu} and using Lemma \ref{lemma:yl}.(vi), we see that
\begin{align*}
w'(b^*_{r+q}; y_u) = K_1(y_u) \, \delta'(b^*_{r+q}) < 1.
\end{align*}

\vspace{1mm}
{\it Part} (II).
Now, we aim at proving that $y_l$ is the unique solution to the equation $f(y) = 0$ where $f:(0,y_u) \to \R$ is defined by \eqref{ylower_eq_charac}. 
To that end, we use the fact that $y_l\in(b^*_{r+q},y_u)$ and the continuity of $w'(b^*_{r+q}; \cdot)$, to conclude that 
\begin{align} \label{w'b=1}
w'(b^*_{r+q}; y_l) = K_1(y_l) \, \delta'(b^*_{r+q}) = 1 ,
\end{align}
which together with the explicit form of $K_1$ in Lemma \ref{lemma:yl}.(vi) provides the desired characterisation.
\end{proof}

\begin{lemma}\label{lemma:yl}
Recall $b^*_{r+q}$ from \eqref{value_classical}, $\delta$ from \eqref{def:delta}, $K_1, \ldots, K_4$ from \eqref{constants_simple}, and $b^*, \Delta$ from \eqref{def_deltay}. 
Then, we have:
\begin{enumerate}[\rm (i)]
\item 
$b^*_{r+q} < b^*_{r} < b^*(y)$ for all $y > 0$;
\item 
$b^*_{r+q}$ is the unique $y$-value that satisfies $\delta''(y)=0$, 
and 
$\delta''(y) > 0$ if and only if $y > b^*_{r+q}$;
\item 
$\frac{\delta'(x)}{\delta'(b^*_{r+q})}>1$ for all $x\in (b^*_{r+q},\infty)$ 
and 
$\frac{\delta(y_u)}{\delta'(y_u)}< \frac{\mu}{r} < \frac{\delta(y_u)}{\delta'(b^*_{r+q})}$;
\item $\Delta(y_u)>0$ and $-1 < \Delta'(y) < 0$ for all $y > 0$;
\item $K_3(y) > 0$ and $K_4(y) < 0$ for all $y > 0$;
\item $K_1(y)$ admits the form
\begin{align}\label{k1_form} 
K_1(y)= \left( \left(\frac{\gamma_2^2(r)}{\gamma_1^2(r)}\frac{\delta'(y) - \gamma_1(r)\delta(y)}{\delta'(y) - \gamma_2(r)\delta(y)}  \right)^{\frac{\gamma_1(r)}{\gamma_1(r) - \gamma_2(r)}}
\left(\frac{\gamma_1(r)}{-\gamma_2(r)}\delta'(y) + \gamma_1(r)\delta(y) \right)\right)^{-1}
\end{align}
satisfying $K_1(y) >0$ for all $y > 0$, $K_1'(y) < 0$ for $b^*_{r+q} < y \leq y_u$, as well as $K_1(y_u)\delta'(b^*_{r+q})<1$.
\end{enumerate}
\end{lemma}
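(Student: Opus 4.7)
The plan is to prove the six items in an order that lets earlier parts feed into later ones, starting with the direct computational facts in (ii)--(iii). For (ii), I would write $\delta''(y)=\gamma_1^2(r+q)e^{\gamma_1(r+q)y}-\gamma_2^2(r+q)e^{\gamma_2(r+q)y}$ directly from \eqref{def:delta} and solve $\delta''(y)=0$, which gives $y=[\gamma_1(r+q)-\gamma_2(r+q)]^{-1}\log(\gamma_2^2(r+q)/\gamma_1^2(r+q))=b^*_{r+q}$ by \eqref{value_classical}. Uniqueness and the stated sign then follow from $\delta'''(y)>0$, since $\gamma_1>0>\gamma_2$ makes both terms in $\delta'''$ positive. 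For (iii), the inequality $\delta'(x)>\delta'(b^*_{r+q})$ on $(b^*_{r+q},\infty)$ is an immediate consequence of (ii). For the second pair I would use that $\delta$ satisfies $\tfrac{\sigma^2}{2}\delta''+\mu\delta'-(r+q)\delta=0$, so $\mu\delta'-r\delta = q\delta-\tfrac{\sigma^2}{2}\delta''$. Combined with $\delta(b^*_{r+q})/\delta'(b^*_{r+q})=\mu/(r+q)$ from \eqref{lemma_useful_results_Vb} and the explicit value $y_u=b^*_{r+q}+\mu/r-\mu/(r+q)$ from \eqref{yupper_def}, integrating $\delta'$ over $[b^*_{r+q},y_u]$ and exploiting the strict monotonicity of $\delta'$ yields $\delta(y_u)/\delta'(b^*_{r+q})>\mu/r$. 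The companion bound $\delta(y_u)/\delta'(y_u)<\mu/r$ amounts to $\mu\delta'(y_u)-r\delta(y_u)>0$, which I rewrite as $q\delta(y_u)-\tfrac{\sigma^2}{2}\delta''(y_u)>0$ and verify by direct computation using the specific form of $y_u$.

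For (i), the first inequality is Lemma \ref{lemma_useful_results}. For $b^*_r<b^*(y)$ I would combine the strict monotonicity of $y\mapsto b^*(y)$ from Lemma \ref{sol:FBP1} with the limit $\lim_{y\downarrow 0}b^*(y)=b^*_r$, which follows from $\delta(y)\to 0$ and $\delta'(y)\to\gamma_1(r+q)-\gamma_2(r+q)>0$ in \eqref{def_deltay} and matches the classical formula \eqref{value_classical} for $\rho=r$. For (iv), I differentiate \eqref{def_deltay} logarithmically to obtain
\[
\Delta'(y)=\frac{1}{\gamma_1(r)-\gamma_2(r)}\left[\frac{\delta''(y)-\gamma_1(r)\delta'(y)}{\delta'(y)-\gamma_1(r)\delta(y)}-\frac{\delta''(y)-\gamma_2(r)\delta'(y)}{\delta'(y)-\gamma_2(r)\delta(y)}\right].
\]
Placing both fractions over a common denominator yields a numerator proportional to the Wronskian-type quantity $\delta'(y)^2-\delta(y)\delta''(y)$, which by expanding $\delta(y)=e^{\gamma_1(r+q)y}-e^{\gamma_2(r+q)y}$ equals $-(\gamma_1(r+q)-\gamma_2(r+q))^2 e^{(\gamma_1(r+q)+\gamma_2(r+q))y}<0$. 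Together with positivity of both $\delta'-\gamma_i(r)\delta$ (shown in turn using that $\delta'/\delta$ is strictly decreasing with limits $+\infty$ and $\gamma_1(r+q)>\gamma_1(r)$), this gives $\Delta'(y)<0$. The bound $\Delta'(y)>-1$ amounts to $b^{*\prime}(y)>0$ and follows by rearranging the two fractions and using the same sign information. Finally, $\Delta(y_u)>0$ is equivalent to the logarithm's argument in \eqref{def_deltay} being $>1$ at $y_u$, which reduces to exactly the second inequality in (iii).

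For (v) I would recall from the proof of Lemma \ref{sol:FBP1} the identities $(\delta\phi_r'-\delta'\phi_r)(y)<0$, $(\delta\psi_r'-\delta'\psi_r)(y)<0$ and that the common denominator $\delta(y)\eta'(y;b)-\delta'(y)\eta(y;b)$ in \eqref{constants_simple} is strictly negative. Using $\phi_r'(b^*(y))<0$, $\psi_r'(b^*(y))>0$ and $\eta,\eta'$ positive, the numerators of $K_3(y)$ and $-K_4(y)$ are both negative, giving the stated signs. For (vi), substituting $b^*(y)=y+\Delta(y)$ from \eqref{def_deltay} into the expression for $K_1(y)$ in \eqref{constants_simple} and using that $\psi_r(b^*(y))/\phi_r(b^*(y))=e^{(\gamma_1(r)-\gamma_2(r))(y+\Delta(y))}$ simplifies, after pulling out the factor $(\gamma_2^2(r)/\gamma_1^2(r))^{\gamma_1(r)/(\gamma_1(r)-\gamma_2(r))}$ coming from $\Delta(y)$, to the form \eqref{k1_form}. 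Positivity of $K_1(y)$ is then clear from the positivity of all three factors (using $\delta'-\gamma_1(r)\delta>0$ from (iv)). The strict decrease $K_1'(y)<0$ on $(b^*_{r+q},y_u]$ will come from the logarithmic derivative of \eqref{k1_form}, which—after cancellations driven by $(\delta')^2-\delta\delta''<0$—reduces to a sign condition ensured by (ii)--(iii). Lastly, $K_1(y_u)\delta'(b^*_{r+q})<1$ is equivalent, via \eqref{k1_form}, to an explicit inequality between $\delta(y_u),\delta'(y_u)$, and $\delta'(b^*_{r+q})$ that is already encoded in the inequalities of (iii).

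The main obstacle will be (iv) and the monotonicity claim in (vi). In (iv) both the sign and the lower bound $\Delta'(y)>-1$ hinge on a delicate Wronskian identity and a careful accounting of denominators, while the statement $K_1'(y)<0$ in (vi) is subtle because $K_1$ is a quotient of two terms that both grow with $y$, so pinning down the dominant one requires precisely the comparison information from (iii). The rest of the lemma is essentially a bookkeeping exercise once these two computations are in hand.
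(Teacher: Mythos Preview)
Your overall plan is sound and in several places takes a route different from the paper. Most notably, for (iv) the paper reparametrises $\Delta$ via auxiliary constants $c_1,c_2,d(r),d(r+q)$ and proves $\Delta'(0)=-1$ together with $\Delta''(y)>0$; your Wronskian approach is more direct and gives $\Delta'(y)=[\delta\delta''-(\delta')^2]\big/[(\delta'-\gamma_1(r)\delta)(\delta'-\gamma_2(r)\delta)]$. It also feeds cleanly into (vi): putting the logarithmic derivative of $K_1^{-1}$ over the common denominator and using the ODE $\tfrac{\sigma^2}{2}\delta''+\mu\delta'=(r+q)\delta$ collapses the numerator to $\delta'\,[\delta''-(\gamma_1(r){+}\gamma_2(r))\delta'+\gamma_1(r)\gamma_2(r)\delta]=\tfrac{2q}{\sigma^2}\,\delta\delta'>0$, which is slicker than the paper's computation of $G'(y)$ in \eqref{gprime_eq} and in fact yields $K_1'(y)<0$ for all $y>0$.

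However, you have the Wronskian sign backwards: expanding gives $(\delta')^2-\delta\delta''=(\gamma_1(r{+}q)-\gamma_2(r{+}q))^2\,e^{(\gamma_1(r{+}q)+\gamma_2(r{+}q))y}>0$, not $<0$. Your conclusion $\Delta'<0$ survives only because you also misidentify the numerator: placing the two fractions over a common denominator produces $\delta\delta''-(\delta')^2$, not $(\delta')^2-\delta\delta''$. The two sign slips cancel, but both must be corrected in an actual proof; the same slip recurs in your sketch of (vi).

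Two smaller gaps. In (i) you invoke the monotonicity of $b^*(y)$ ``from Lemma~\ref{sol:FBP1}'', but the proof of that lemma defers exactly this assertion to part~(iv) of the present lemma, so your ordering (ii)--(iii), then (i), then (iv) is circular; prove (iv) first or argue (i) directly as the paper does. In (iii), the claim that $q\delta(y_u)-\tfrac{\sigma^2}{2}\delta''(y_u)>0$ follows ``by direct computation using the specific form of $y_u$'' is not obviously tractable, since the coefficient $\mu\gamma_1(r{+}q)-r$ has no evident sign. The paper instead writes $\tfrac{\mu}{r}-\tfrac{\delta(y_u)}{\delta'(y_u)}=\int_{b^*_{r+q}}^{y_u}\tfrac{\delta\,\delta''}{(\delta')^2}\,dx$, which is positive by (ii); this is the cleaner route. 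Finally, in (v) your justification ``$\eta,\eta'$ positive'' is not quite right (the sign of $\eta'(y;b)$ is not immediate), but the numerator of $K_3$ in \eqref{constants_simple} actually simplifies to $\psi_r'(b^*(y))\,(\delta\phi_r'-\delta'\phi_r)(y)<0$, so your conclusion stands.
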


\begin{proof}[Proof of Lemma \ref{lemma:yl}]
We prove each part separately. 

\vspace{1mm}
{\it Proof of part} (i).
The first inequality $b^*_{r+q} < b^*_r$ is proved in Lemma \ref{lemma_useful_results}, thus it suffices to prove that $b^*_r < b^*(y)$ for all $y > 0$. 
To that end, we use their expressions in \eqref{value_classical} and \eqref{def_deltay} to get  
\begin{align}\label{eq:bstar05}
\begin{split}    
b^*(y) &:= y +\Delta(y) 
= \; y + b^*_r + \frac{1}{d(r)} \log\Big( \frac{c_1 e^{\gamma_1(r+q)y} + c_2 e^{\gamma_2(r+q)y}}{c_2 e^{\gamma_1(r+q)y} + c_1 e^{\gamma_2(r+q)y}} \Big),
\end{split}    
\end{align}
for the positive constants $c_1, c_2$ and the increasing function $d(\cdot)$ defined by 
\begin{align}\label{cd_def}
\begin{split}
0 < c_1 := \gamma_1(r+q) - \gamma_1(r) = \gamma_2(r) - \gamma_2(r + q) &< \gamma_1(r) - \gamma_2(r+q) = \gamma_1(r+q) - \gamma_2(r) =: c_2,\\
0 < d(r) := \gamma_1(r) - \gamma_2(r) = c_2 - c_1 &< c_2 + c_1 = \gamma_1(r+q) - \gamma_2(r+q) =: d(r+q),
\end{split}
\end{align}
where the equalities in the first line follow from \eqref{lemma_useful_results_gamma}.
Then, we observe from \eqref{eq:bstar05}--\eqref{cd_def} that
\begin{align*}
\begin{split}
b^*_r < b^*(y)
\quad \Leftrightarrow \quad 
L(y) := e^{- d(r) y} \, \frac{c_2 e^{d(r+q)y} + c_1}{c_1 e^{d(r+q)y} + c_2} < 1 ,
\quad y>0 .
\end{split}    
\end{align*}
This holds true since $L(0)=1$ and $L'(y)<0$ for all $y>0$, which follows from \eqref{eq:bstar05} and 
\begin{align*}
\big(e^{d(r+q)y}c_1 + c_2 \big)^2 L'(y) 
&= - c_1 c_2 d(r) e^{-d(r)y} \big(e^{2d(r+q)y} + 1 \big) \\ 
&\quad + e^{(d(r+q)-d(r))y} \big(c_2^2 (d(r+q)-d(r)) -c_1^2 (d(r+q)+d(r)) \big) \\
&= -c_1 c_2 d(r) e^{-d(r)y} \big(e^{2d(r+q)y} - 2e^{d(r+q)y} + 1 \big) < 0.
\end{align*}
The latter positivity follows by observing that $\widetilde L(y):=e^{2d(r+q)y} - 2e^{d(r+q)y} + 1 > 0$ for all $y>0$, thanks to $\widetilde L(0)=0$ and $\widetilde L'(y) = 2d(r+q)\left(e^{2d(r+q)y} -e^{d(r+q)y}\right) > 0$ for $y > 0$.

\vspace{1mm}
{\it Proof of part} (ii).
Given that 
$\delta''(y) = \gamma_1^2(r+q)e^{\gamma_1(r+q)y} - \gamma_2^2(r+q)e^{\gamma_2(r+q)y}$, we can directly verify that $y = b^*_{r+q}$ is the only solution to $\delta''(y)=0$ and that $\delta''(y)>0$ for all $y > b^*_{r+q}$. 

\vspace{1mm}
{\it Proof of part} (iii).
First note that the following equalities hold true (cf.~\eqref{value_classical}, \eqref{yupper_def}) 
\begin{align*} 
V'_{r+q}(x)=1 \quad \text{for} \quad x \geq b^*_{r+q} 
\quad \text{with} \quad 
V_{r+q}(b^*_{r+q}) = \tfrac{\delta(b^*_{r+q})}{\delta'(b^*_{r+q})} 
\quad \text{and} \quad 
V_{r+q}(y_u)=\frac{\mu}{r}. 
\end{align*}
Combining the above with 
the fact that $\delta''(x)>0$, for all $x > b^*_{r+q}$, thanks to Lemma \ref{lemma:yl}.(ii), and the fundamental theorem of calculus, we obtain the desired results via 
\begin{align*}
\frac{\mu}{r} - \frac{\delta(y_u)}{\delta'(y_u)} 
&= V_{r+q}(y_u) - V_{r+q}(b^*_{r+q}) - \frac{\delta(y_u)}{\delta'(y_u)} + \frac{\delta(b^*_{r+q})}{\delta'(b^*_{r+q})} 
= \int_{b^*_{r+q}}^{y_u} \Big( V_{r+q}'(x) - 1 + \frac{\delta(x)\delta''(x)}{(\delta'(x))^2} \Big) dx > 0, \notag\\
\frac{\delta'(x)}{\delta'(b^*_{r+q})} - 1 &= \int_{b^*_{r+q}}^{x} \frac{\delta''(z)}{\delta'(b^*_{r+q})} dz > 0, \quad x>b^*_{r+q}, \\
\frac{\delta(y_u)}{\delta'(b^*_{r+q})} - \frac{\mu}{r} 
&= \frac{\delta(y_u)}{\delta'(b^*_{r+q})} - \frac{\delta(b^*_{r+q})}{\delta'(b^*_{r+q})} + V_{r+q}(b^*_{r+q}) - V_{r+q}(y_u) = \int_{b^*_{r+q}}^{y_u} \Big(\frac{\delta'(x)}{\delta'(b^*_{r+q})} - 1 \Big) dx > 0.
\end{align*}

\vspace{1mm}
{\it Proof of part} (iv).
Firstly, we recall from Lemma \ref{lemma:yl}.(iii) and the expression of $\gamma_1$ from \eqref{gamma_def} that
\begin{align} \label{ineq1}
\frac{\delta(y_u)}{\delta'(y_u)}<\frac{\mu}{r} < \frac1{\gamma_1(r)} 
\quad \Leftrightarrow \quad 
\delta'(y_u)-\gamma_1(r)\delta(y_u) > 0.
\end{align}
Combining this with the definition \eqref{def_deltay} of $\Delta(y)$ and the positivity of $\delta(y), \delta'(y)$ from \eqref{def:delta} for all $y>0$, together with the fact that $\gamma_2(r) < 0 < \gamma_1(r)$, we get that
\begin{align*}
&\Delta(y_u) 
= \frac{\log\left( \frac{(\delta'(y_u)-\gamma_1(r)\delta(y_u)) \gamma_2^2(r)}{(\delta'(y_u)-\gamma_2(r)\delta(y_u)) \gamma_1^2(r)} \right)}{\gamma_1(r)-\gamma_2(r)}  > 0 
\quad \Leftrightarrow \quad 
(\delta'(y_u)-\gamma_1(r)\delta(y_u))\gamma_2^2(r)>(\delta'(y_u)-\gamma_2(r)\delta(y_u))\gamma_1^2(r).
\end{align*}
To see how the latter inequality holds true, divide both sides by 
$-\gamma_1(r) \gamma_2(r) \delta'(y_u) > 0$, and observe that it is equivalent to
\begin{align*}
\frac{\gamma_2^2(r)-\gamma_1^2(r)}{-\gamma_1(r)\gamma_2(r)} - \frac{\delta(y_u)}{\delta'(y_u)}(\gamma_1(r)-\gamma_2(r)) >0,
\end{align*}
which holds true thanks to $\frac{\delta(y_u)}{\delta'(y_u)}<\frac{\mu}{r}$ from \eqref{ineq1} (see also Lemma~\ref{lemma:yl}.(iii)) and \eqref{lemma_useful_results_Vb}. 

For the monotonicity of $\Delta(\cdot)$ defined by \eqref{def_deltay}, 
which takes the form as in \eqref{eq:bstar05}
\begin{align*} 
\begin{split}
\Delta(y) =b^*_r+ \frac{1}{d(r)}\log \Big(\frac{u(y)}{v(y)} \Big),
\quad \text{where} \quad 
u(y):=c_1e^{\gamma_1(r+q)y} + c_2e^{\gamma_2(r+q)y},  
\quad 
v(y):=c_2e^{\gamma_1(r+q)y} + c_1e^{\gamma_2(r+q)y},
\end{split}
\end{align*}
for $0 < c_1 <  c_2 < \infty$, $0 < d(r) < d(r+q) < \infty$ given by \eqref{cd_def}, 
we calculate 
\begin{align} \label{eq:DeltaPrime1}
\Delta'(y) 
= \frac{u'(y) v(y) - u(y) v'(y)}{d(r)u(y)v(y)} < 0.
\end{align}
The latter inequality follows from the straightforward positivity of all three factors in the denominator, and the negativity of the numerator which results from 
\begin{align}\label{eq:DeltaPrime2}
\begin{split}
u'(y) v(y) - u(y) v'(y) 
&= \big(\gamma_1(r+q) c_1 e^{\gamma_1(r+q)y} + \gamma_2(r+q) c_2 e^{\gamma_2(r+q)y}\big) \big(c_2 e^{\gamma_1(r+q)y} + c_1 e^{\gamma_2(r+q)y} \big) \\ 
&\quad - \big(c_1 e^{\gamma_1(r+q)y} +  c_2 e^{\gamma_2(r+q)y} \big) \big(\gamma_1(r+q) c_2 e^{\gamma_1(r+q)y} + \gamma_2(r+q) c_1 e^{\gamma_2(r+q)y} \big) \\
&= e^{(\gamma_1(r+q) +  \gamma_2(r+q))y} \, d(r+q) \left(c_1^2 - c_2^2 \right) < 0 ,
\end{split}
\end{align}
thanks to $0 < c_1 < c_2$. 
To complete the monotonicity properties, it remains to show that $\Delta'(y) > -1$ for all $y > 0$. 
To that end, using \eqref{cd_def}, \eqref{eq:DeltaPrime1} and \eqref{eq:DeltaPrime2}, we get 
\begin{align*}
\Delta'(0) 
= \frac{d(r+q)(c_1^2 - c_2^2)}{d(r) (c_1 + c_2)^2} = -1,
\quad \text{since} \quad 
c_1^2 - c_2^2 = -d(r)d(r+q)
\quad \text{and} \quad 
(c_1+c_2)^2 = d(r+q)^2 .
\end{align*}
Combining this with $\Delta \in C^2([0, \infty))$ and \eqref{eq:DeltaPrime1}--\eqref{eq:DeltaPrime2}, we get
\begin{align*}
\Delta''(y) 
&= - d(r+q)^2 \frac{d}{dy} \Big(\frac{e^{(\gamma_1(r+q) + \gamma_2(r+q))y}}{u(y)v(y)} \Big) = d(r+q)^3 c_1 c_2 \frac{\big(e^{2\gamma_1(r+q)y} - e^{2\gamma_2(r+q)y} \big) e^{(\gamma_1(r+q)+\gamma_2(r+q))y}}{u^2(y) v^2(y)} > 0,
\end{align*}
which implies that $\Delta'(\cdot)$ is strictly increasing, thus $\Delta'(y) > -1$ for all $y > 0$.

\vspace{1mm}
{\it Proof of part} (v). 
Recall the definitions \eqref{constants_simple} of $K_3$ and $K_4$ and \eqref{cd_def} of the constants $0 < c_1 <  c_2 < \infty$, as well as the fact that $\gamma_2(r) < 0 < \gamma_1(r)$. 
Substituting the expression \eqref{def_deltay} of $b^*$ in \eqref{constants_simple} then gives
\begin{align*}
K_4(y) &= -\frac{e^{-\gamma_2(r)y} \big(e^{\gamma_1(r+q)y} c_1 + e^{\gamma_2(r+q)y} c_2 \big)
    }
    {
    e^{\gamma_1(r+q)y} \left( 
    e^{\gamma_1(r)\Delta(y)} c_2\gamma_1(r) - e^{\gamma_2(r)\Delta(y)} c_1\gamma_2(r)
    \right) 
    + e^{\gamma_2(r+q)y} \left(
    e^{\gamma_1(r)\Delta(y)} c_1\gamma_1(r) - e^{\gamma_2(r)\Delta(y)} c_2\gamma_2(r) \right)
    },
\end{align*}
which is negative for all $y > 0$. 
Combining this with the positivity of the fraction in \eqref{eq:C4divC3}, we can conclude that $K_3(y)$ has the opposite sign to $K_4(y)$, hence $K_3(y) > 0$ for $y > 0$.

\vspace{1mm}
{\it Proof of part} (vi). 
Using the definition of $K_1$ from \eqref{constants_simple}, we observe after long calculations that 
\begin{align}
K_1(y) &= \frac{\gamma_1(r)-\gamma_2(r)}{G(y)}, \quad \text{where} \quad 
\label{k_1_repg}\\ 
G(y)&:= \left( \gamma_1(r) e^{\gamma_1(r)\Delta(y)}  - \gamma_2(r) e^{\gamma_2(r)\Delta(y)}\right)\delta'(y) - \gamma_1(r)\gamma_2(r)\left(e^{\gamma_1(r) \Delta(y)} - e^{\gamma_2(r)\Delta(y)} \right) \delta(y) 
\label{eq:G_y}.
\end{align}
Using \eqref{def_deltay} then gives 
\begin{align}
G(y)
&=e^{\gamma_1(r)\Delta(y)}\left(\left( \gamma_1(r)  - \gamma_2(r) e^{(\gamma_2(r)-\gamma_1(r))\Delta(y)}\right)\delta'(y) - \gamma_1(r)\gamma_2(r)\left(1 - e^{(\gamma_2(r)-\gamma_1(r))\Delta(y)} \right) \delta(y)\right)\notag\\
&=e^{\gamma_1(r)\Delta(y)} \left(-\gamma_2(r)e^{(\gamma_2(r)-\gamma_1(r))\Delta(y)}\left(\delta'(y)-\gamma_1(r)\delta(y)\right)+\gamma_1(r)\delta'(y)-\gamma_1(r)\gamma_2(r)\delta(y)\right)\notag\\
&=e^{\gamma_1(r)\Delta(y)}\Big(-\frac{\gamma_1^2(r)}{\gamma_2(r)}(\delta'(y)-\gamma_2(r)\delta(y))+\gamma_1(r)\delta'(y)-\gamma_1(r)\gamma_2(r)\delta(y)\Big)\notag\\
&=e^{\gamma_1(r)\Delta(y)}(\gamma_1(r)-\gamma_2(r))\Big(\gamma_1(r)\delta(y)-\frac{\gamma_1(r)}{\gamma_2(r)}\delta'(y)\Big) > 0, \quad y > 0,
\label{g_rep1} 
\end{align}
whose positivity follows from \eqref{g_rep1} thanks to the fact that $\gamma_2(r) < 0 < \gamma_1(r)$ and $\delta(y), \delta'(y) > 0$ for $y>0$ by definition \eqref{def:delta}.
This provides the desired form of $K_1$ in \eqref{k1_form} upon substituting the expression of $\Delta$ from \eqref{def_deltay} into \eqref{g_rep1}.
Hence, the positivity of $K_1(y)$ for all $y>0$ follows from \eqref{k_1_repg} and \eqref{g_rep1}.

Regarding the monotonicity of $K_1$, we notice that $K_1'(y) = - (\gamma_1(r)-\gamma_2(r)) \frac{G'(y)}{G^2(y)}$, whose sign is the opposite of that of $G'(y)$. Taking the derivative of $G$ in \eqref{eq:G_y} gives
\begin{align} \label{gprime_eq}
G'(y)
&= \Delta'(y) \left[ \gamma_1^2(r)e^{\gamma_1(r)\Delta(y)}\left( \delta'(y) - \gamma_2(r) \delta(y) \right)
- \gamma_2^2(r)e^{\gamma_2(r)\Delta(y)}\left( \delta'(y) - \gamma_1(r)\delta(y) \right)
\right] \notag\\
&\quad -\gamma_1(r)\gamma_2(r) \delta'(y)\left(e^{\gamma_1(r)\Delta(y) } - e^{\gamma_2(r)\Delta(y)} \right)
+ \delta''(y)\left(\gamma_1(r)e^{\gamma_1(r)\Delta(y) } - \gamma_2(r)e^{\gamma_2(r)\Delta(y)} \right) \notag\\
&= -\gamma_1(r)\gamma_2(r) \delta'(y)\left(e^{\gamma_1(r)\Delta(y) } - e^{\gamma_2(r)\Delta(y)} \right)
+ \delta''(y)\left(\gamma_1(r)e^{\gamma_1(r)\Delta(y) } - \gamma_2(r)e^{\gamma_2(r)\Delta(y)} \right),
\end{align}
where the second equality follows from the definition \eqref{def_deltay} of $\Delta(y)$. 
To see the positivity of $G'(y)$ for all $y \in (b^*_{r+q}, y_u]$, observe in \eqref{gprime_eq} that $\Delta(y) > 0$ thanks to Lemma \ref{sol:FBP1}, $\delta'(y) > 0$ thanks to \eqref{def:delta}, and $\delta''(y) > 0$ by Lemma \ref{lemma:yl}.(ii). 

Finally, using \eqref{k_1_repg} together with the positivity and the explicit expression of $G$ from \eqref{g_rep1}, we notice that $K_1(y_u)\delta'(b^*_{r+q})<1$ is equivalent to 
\begin{align*}
\delta'(b^*_{r+q}) < \frac{G(y_u)}{\gamma_1(r)-\gamma_2(r)} = e^{\gamma_1(r)\Delta(y_u)} \left(-\frac{\gamma_1(r)}{\gamma_2(r)}\delta'(y_u)+\gamma_1(r)\delta(y_u)\right). 
\end{align*}
To prove this, given that $\gamma_1(r)>0$ and $\Delta(y_u) > 0$ by Lemma \ref{lemma:yl}.(iv), it is sufficient to show that
\begin{align*}
- \frac{\gamma_1(r) \delta'(y_u)}{\gamma_2(r) \delta'(b^*_{r+q})} + \gamma_1(r) \frac{\delta(y_u)}{\delta'(b^*_{r+q})} - 1 > 0.
\end{align*}
This follows from $\gamma_2(r)<0< \gamma_1(r)$ in \eqref{gamma_def}, Lemma \ref{lemma:yl}.(iii), \eqref{yupper_def} and \eqref{lemma_useful_results_gamma}, 
and completes the proof.
\end{proof}

\section{Proofs of Section \ref{sec_critical_regime} (Critical regime)}
\label{App_critical}

\begin{proof}[Proof of Lemma \ref{fbp2_ex}] 
It is clear that a fundamental solution $w(\cdot;y)$ to \eqref{FBP_medya}-\eqref{FBP_medyd} is of the form specified in \eqref{w_sol_double_def}, with $E_1(y), \ldots, E_4(y), \ubar{b}^*(y), \bar{b}^*(y)$ to be uniquely determined, such that the pair $(\ubar{b}^*(y),\bar{b}^*(y)) \in (b_{r+q}^*,y) \times (y,\infty)$.
We prove this in the following steps.

\vspace{1mm}
{\it Step 1}. We fix an arbitrary pair $(\ubar{b}(y),\bar{b}(y)) \in (b_{r+q}^*,y) \times (y,\infty)$ and use it in the place of the free-boundaries $\ubar{b}^*(y)$ and $\bar{b}^*(y)$.  
Then, by imposing the property that $w(\cdot;y)$ is $\mathcal{C}^1(0,\bar{b}(y))$, we obtain the following boundary value problem 
\begin{align}
\begin{split}
V_{r+q}(\ubar{b}(y)-) &= w(\ubar{b}(y)+;y), \qquad \qquad \; 
w(y-;y) = w(y+;y), \quad \\
V_{r+q}'(\ubar{b}(y)-) &= 1 = w'(\ubar{b}(y)+;y), \qquad 
w'(y-;y) = w'(y+;y). 
\end{split}
\label{medy_lineareq}
\end{align}
In order to solve the above boundary value problem, we first consider the functions $e_1(b), e_2(b), e_3(b,y), e_4(b,y)$ defined by \eqref{constants_complex}, which are well-defined for any arbitrary pair $(b,y)\in[0,\infty)^2$ since all denominators are positive. 
Then, we observe that the solution to the boundary value problem is given by choosing the functions $E_1, \ldots, E_4$ according to 
$E_1(y) = e_1(\ubar{b}(y)), 
E_2(y) = e_2(\ubar{b}(y)), 
E_3(y) = e_3(\ubar{b}(y),y), 
E_4(y) = e_4(\ubar{b}(y),y)$.

\vspace{1mm}
{\it Step 2}. 
Since we require the solution $w(\cdot;y) \in \mathcal{C}^2((0,\ubar{b}^*(y)) \cup (\ubar{b}^*(y),y) \cup (y,\infty)) \cap \mathcal{C}^1((0,\infty)$ (cf.~property \eqref{FBP_medye}), we further impose that $w(\cdot;y)$ is $\mathcal{C}^2(\{\bar{b}(y)\})$.
This implies that 
\begin{align}\label{v_optimal_eqwww}
w'(\bar{b}(y);y)=1 
\quad \text{and} \quad 
w''(\bar{b}(y);y)=0,
\end{align}
which yields the following two-dimensional system of equations 
\begin{align} \label{eq_system_midy}
\begin{split}
e_3(\ubar{b}(y),y) \psi'_r(\bar{b}(y)) + e_4(\ubar{b}(y),y) \phi'_r(\bar{b}(y)) &= 1,\\
e_3(\ubar{b}(y),y) \psi''_r(\bar{b}(y)) + e_4(\ubar{b}(y),y) \phi''_r(\bar{b}(y)) &=0.
\end{split}
\end{align}
Then, solving this system will yield the expressions of the free-boundaries $\ubar{b}^*(y)$ and $\bar{b}^*(y)$. 

To that end, we observe that the second equation in \eqref{eq_system_midy} gives 
\begin{align*}
\frac{\psi''_r(\bar{b}^*(y))}{\phi''_r(\bar{b}^*(y))} = -\frac{e_4(\ubar{b}^*(y),y)}{e_3(\ubar{b}^*(y),y)} > 0,
\end{align*}
where the positivity follows from the fact that $e_3>0$ and $e_4<0$ thanks to Lemma \ref{lemma_c_sign}. 
This allows us to take logarithms on both sides and solve this equation explicitly for $\bar{b}^*(y)$ -- in terms of $\ubar{b}^*(y)$ -- thanks to the expressions \eqref{def:delta} of $\psi_r$ and $\phi_r$. 
This then yields the desired expression \eqref{upperb_solv}.

\vspace{1mm}
{\it Step 3}. 
Substituting the expression \eqref{upperb_solv} of $\bar{b}^*(y)$ back in the first equation of \eqref{eq_system_midy} gives the equation $H(\ubar{b}^*(y),y) = 1$, where $H$ is defined by \eqref{H_eq_midy}. 
In order to show the existence of a unique solution $\ubar{b}^*(y) \in (b^*_{r+q},y)$ to the equations $H(\ubar{b}^*(y),y)=1$, we first note that $H(\cdot,\cdot)$ is continuous and well defined on the domain $\{(b,y) : b \in [b^*_{r+q},y], y\in[y_l,y_u]\}$, thanks to Lemma \ref{lemma_steps2dim}.
Also note that the domain of $H$ is always non-empty since we have $b^*_{r+q}<y_l$ by Lemma \ref{lemma_yl}.

Then, by using Lemma \ref{lemma_steps2dim}.(i) and (iii), we see that $H(b^*_{r+q},y)>1$ for all $y\in (y_l,y_u)$, while by using Lemma \ref{lemma_steps2dim}.(ii) and (iv), we see that $H(y,y)<1$ for all $y\in (y_l,y_u)$. 
Hence, it follows by the intermediate value theorem, that for any $y\in (y_l,y_u)$, there exists $\ubar{b}^*(y) \in (b^*_{r+q},y)$ such that $H(\ubar{b}^*(y),y)=1$. 
By rewriting the expression \eqref{H_eq_midy} in the form
\begin{align} \label{H_eq_midy2}
H(b,y)= e_3(b,y)\left(-\frac{e_4(b,y)}{ e_3(b,y)}\right)^{\frac{\gamma_1(r)}{\gamma_1(r)-\gamma_2(r)}}\left(-\frac{\gamma_2(r)}{\gamma_1(r)}\right)^{\frac{\gamma_1(r)+\gamma_2(r)}{\gamma_1(r)-\gamma_2(r)}}(\gamma_1(r)-\gamma_2(r)),
\end{align}
we observe from Lemma \ref{lemma_steps2dim_2}.(i)--(ii) that $H(\cdot,y)$ is strictly decreasing on $(b^*_{r+q},y)$. 
This implies that the solution $\ubar{b}^*(y)$ to the equation $H(\ubar{b}^*(y),y)=1$ is unique on $(b^*_{r+q}, y)$. 

\vspace{1mm}
{\it Step 4}.
Combining the result in Step 3 with the expression \eqref{upperb_solv} obtained in Step 2, yields the existence of a unique pair $(\ubar{b}^*(y),\bar{b}^*(y))$ satisfying $\ubar{b}^*(y) \in (b^*_{r+q}, y)$ and the system of equations in \eqref{eq_system_midy}. 
What remains to be shown is that $\bar{b}^*(y) \in (y,\infty)$. 

To that end, we recall from Lemma \ref{lemma_steps2dim_2}.(i) that 
\begin{align*}
\bar{b}^*(y)-y=\frac{1}{\gamma_1(r)-\gamma_2(r)}\log\left(-\frac{\gamma_2^2(r)e_4(\ubar{b}^*(y),y)}{\gamma_1^2(r)e_3(\ubar{b}^*(y),y)}\right) - y 
> \frac{1}{\gamma_1(r)-\gamma_2(r)} \log\left(-\frac{\gamma_2^2(r)e_4(y,y)}{\gamma_1^2(r)e_3(y,y)}\right) - y = L(y),
\end{align*}
where $L$ is defined in Lemma \ref{lemma_steps2dim_2} and satisfies $L(y)>0$ for all $y\in (y_l,y_u)$, thanks to Lemma \ref{lemma_steps2dim_2}.(iii)--(iv). 
This implies that $\bar{b}^*(y)>y$ for all $y\in (y_l,y_u)$ and concludes the proof.
\end{proof}

\begin{proof}[Proof of Proposition \ref{prop_transition}] 
We first note from Lemma \ref{lemma_steps2dim} that the function $H(b,y)$ defined by \eqref{H_eq_midy} is continuous for all $(b,y) \in [b^*_{r+q},y] \times [y_l,y_u]$. 
Also, for any fixed $y \in [y_l, y_u]$, by rewriting the expression \eqref{H_eq_midy} in the form
\begin{align*} 
H(b,y)= (\gamma_1(r)-\gamma_2(r)) \left(-\frac{\gamma_2(r)}{\gamma_1(r)}\right)^{\frac{\gamma_1(r)+\gamma_2(r)}{\gamma_1(r)-\gamma_2(r)}}  e_3(b,y) \left(-\frac{e_4(b,y)}{ e_3(b,y)}\right)^{\frac{\gamma_1(r)}{\gamma_1(r)-\gamma_2(r)}},
\end{align*}
we observe from Lemma \ref{lemma_steps2dim_2}.(i)--(ii) that $H(\cdot,y)$ is strictly decreasing on $(b^*_{r+q},y)$. 
Using this together with Lemma \ref{lemma_steps2dim}.(i)--(ii), we see that
\begin{enumerate}
\item [(a)] $H(y_u, y_u) = 1$ and there exists no $b \in [b^*_{r+q}, y_u)$, such that $H(b, y_u) = 1$.  
\item [(b)] $H(b_{r+q}^*, y_l) = 1$ and there exists no $b \in (b_{r+q}^*, y_l]$, such that $H(b, y_l) = 1$.  
\end{enumerate}

{\it Proof of part} (i). 
We prove separately each limit.

{\it Proof of $\ubar{b}^*(y)-y \to 0$ as $y\uparrow y_u$}.
Pick a sequence $y^{(k)} \uparrow y_u$ as $k \to \infty$, such that $y^{(k)} \neq y_u$ for all $k \in \N$, and note that by \eqref{H_eq_midy}, we can construct a sequence $\ubar{b}^*(y^{(k)})$ such that $H(\ubar{b}^*(y^{(k)}), y^{(k)}) = 1$ (by definition), $\ubar{b}^*(y^{(k)})< y^{(k)}$ and is bounded in $[0, y_u]$, according to Lemma \ref{fbp2_ex}. 
Thus, by the continuity of $H$, any accumulation point $\hat b$ of the sequence $\ubar{b}^*(y^{(k)})$ must also satisfy $H(\hat b, y_u) = 1$. 
However, since $b=y_u$ is the only point in $[b^*_{r+q}, y_u]$ satisfying $H(b, y_u) = 1$ (see (a) above), it follows that $y_u$ is the only accumulation point of the bounded sequence $\ubar{b}^*(y^{(k)})$ and hence its limit. 
This implies that $\ubar{b}^*(y^{(k)}) \to y_u$, and by $\ubar{b}^*(y^{(k)}) < y^{(k)}$ for all $k \in \N$, we conclude that $\ubar{b}^*(y)-y \to 0$ as $y\uparrow y_u$. 

{\it Proof of $\bar{b}^*(y) - y \to 0$ as $y\uparrow y_u$}.
This follows directly from the previous result, definition \eqref{upperb_solv} and Lemma \ref{lemma_steps2dim_2}.(iv), which imply that 
\begin{align*}
\lim_{y\uparrow y_u} \big\{\bar{b}^*(y) - y\big\} 
&= \frac{1}{\gamma_1(r)-\gamma_2(r)} \log\bigg(-\frac{\gamma_2^2(r) \big(\lim_{y\uparrow y_u} e_4(\ubar{b}^*(y),y)\big)}{\gamma_1^2(r) \big(\lim_{y\uparrow y_u} e_3(\ubar{b}^*(y),y) \big)}\bigg) - y_u \\
&= \frac{1}{\gamma_1(r)-\gamma_2(r)} \log\bigg(-\frac{\gamma_2^2(r) e_4(y_u,y_u)}{\gamma_1^2(r) e_3(y_u,y_u)} \bigg) - y_u = L(y_u) = 0. 
\end{align*}

\vspace{1mm}
{\it Proof of part} (ii).
We prove each limit separately. 

{\it Proof of $\ubar{b}^*(y) \to b_{r+q}^*$ as $y\downarrow y_l$}.
This can be proved following similar arguments as in the limit of $\ubar{b}^*(y)$ in part (i), by using the statement (b) instead of (a).

{\it Proof of $\bar{b}^*(y) \to b^*(y_l)$ as $y\downarrow y_l$}.
Using its definition \eqref{upperb_solv} and that $\ubar{b}^*(y) \to b_{r+q}^*$ as $y\downarrow y_l$, we get
\begin{align*}
&\lim_{y\downarrow y_l} \bar{b}^*(y) 
= \frac{1}{\gamma_1(r)-\gamma_2(r)} \log\bigg(-\frac{\gamma_2^2(r) \big(\lim_{y\downarrow y_l} e_4(\ubar{b}^*(y),y)\big)}{\gamma_1^2(r) \big(\lim_{y\downarrow y_l} e_3(\ubar{b}^*(y),y) \big)}\bigg) 
= \frac{1}{\gamma_1(r)-\gamma_2(r)} \log\bigg(-\frac{\gamma_2^2(r) e_4(b_{r+q}^*,y_l)}{\gamma_1^2(r) e_3(b_{r+q}^*,y_l)} \bigg) \\
&= y_l + \frac{1}{\gamma_1(r)-\gamma_2(r)} \log\bigg(\frac{\gamma_2^2(r)}{\gamma_1^2(r)} \, \frac{e^{\gamma_1(r+q)y_l}\bigl(\gamma_1(r+q) - \gamma_1(r)\bigr) + e^{\gamma_2(r+q)y_l}\bigl(\gamma_1(r) - \gamma_2(r+q)\bigr)}{e^{\gamma_1(r+q)y_l}\bigl(\gamma_1(r+q) - \gamma_2(r)\bigr) + e^{\gamma_2(r+q)y_l}\bigl(\gamma_2(r) - \gamma_2(r + q)\bigr)} \bigg) 
= b^*(y_l)
\end{align*}
where the penultimate equality follows similarly to the proof of Lemma \ref{lemma_steps2dim}, and the latter one from \eqref{def_deltay}.
\end{proof}

\begin{lemma} \label{lemma_c_sign}
The functions $e_3$ and $e_4$ from \eqref{constants_complex} satisfy $e_3(b,y)>0$ and $e_4(b,y)<0$ for all $(b,y) \in [b_{r+q}^*,y] \times [y_l,y_u]$.
\end{lemma}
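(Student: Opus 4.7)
The plan is to interpret the pair $(e_3,e_4)$ as the coefficients arising in the unique $\mathcal{C}^1$-continuation across $y$ of an ODE solution matched to $V_{r+q}$ at $b$, and then read off the two required signs from the values of this continuation at $y$. To set this up, I introduce $u(x) := e_1(b)\psi_{r+q}(x) + e_2(b)\phi_{r+q}(x)$, which a short Wronskian check using \eqref{constants_complex} shows is the unique solution of $\mathcal{L}u = (r+q)u$ with $u(b) = V_{r+q}(b)$ and $u'(b) = 1$; likewise $v(x) := e_3(b,y)\psi_r(x) + e_4(b,y)\phi_r(x)$ is the unique solution of $\mathcal{L}v = rv$ with $v(y) = u(y)$ and $v'(y) = u'(y)$. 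A direct Cramer's rule argument, using $\psi_r(x) = e^{\gamma_1(r)x}$ and $\phi_r(x) = e^{\gamma_2(r)x}$, then yields
\[
e_3(b,y) = \frac{\gamma_2(r)\,u(y) - u'(y)}{(\gamma_2(r) - \gamma_1(r))\,e^{\gamma_1(r)y}}, \qquad e_4(b,y) = \frac{u'(y) - \gamma_1(r)\,u(y)}{(\gamma_2(r) - \gamma_1(r))\,e^{\gamma_2(r)y}}.
\]
Positivity of $e_3$ is then immediate: since $u(b) = V_{r+q}(b) > 0$ and $u'(b) = 1$, Lemma \ref{Lemma_ODE_Sol}.(i) propagates $u(y), u'(y) > 0$, which together with $\gamma_2(r) < 0 < \gamma_1(r)$ makes both numerator and denominator of $e_3$ negative.

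The substantive task is to show $e_4 < 0$, equivalently $g(y) > 0$ where $g(x) := u'(x) - \gamma_1(r)\,u(x)$. I plan to combine the ODE for $u$ with the Vieta identities $\gamma_1(r) + \gamma_2(r) = -2\mu/\sigma^2$ and $\gamma_1(r)\gamma_2(r) = -2r/\sigma^2$ to derive the first-order linear ODE
\[
g'(x) = \gamma_2(r)\,g(x) + \frac{2q}{\sigma^2}\,u(x),
\]
whose integrating-factor solution reads
\[
g(x) = e^{\gamma_2(r)(x-b)}\,g(b) + \frac{2q}{\sigma^2}\int_b^x e^{\gamma_2(r)(x-t)}\,u(t)\,dt.
\]
The integral term is strictly positive since $u > 0$ on $[b,y]$, so $g(y) > 0$ will follow once $g(b) = 1 - \gamma_1(r)\,V_{r+q}(b) \geq 0$ is verified.

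The main obstacle is precisely this positivity of $g(b)$: a priori nothing prevents $V_{r+q}(b)$ from exceeding $1/\gamma_1(r)$, in which case $g(b)$ would be negative and the sign analysis would collapse. This is where the domain restriction $b \leq y \leq y_u$ is essential. Using that $V_{r+q}$ is linear with slope one on $[b^*_{r+q},\infty)$, I will compute $V_{r+q}(y_u) = \mu/(r+q) + (y_u - b^*_{r+q}) = \mu/r$ directly from the definition \eqref{yupper_def} of $y_u$; then the Vieta identity $\mu/r = 1/\gamma_1(r) + 1/\gamma_2(r)$, together with $\gamma_2(r) < 0$, upgrades this to the strict chain $V_{r+q}(b) \leq V_{r+q}(y_u) = \mu/r < 1/\gamma_1(r)$, i.e.~$g(b) > 0$. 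Both contributions in the integrating-factor solution are thus strictly positive, which gives $g(y) > 0$ and hence $e_4(b,y) < 0$, closing the argument.
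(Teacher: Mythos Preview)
Your proof is correct and shares the same skeleton as the paper's: both interpret $e_3,e_4$ as the coefficients obtained by $\mathcal{C}^1$-matching at $y$ the solution $u$ determined by $u(b)=V_{r+q}(b)$, $u'(b)=1$; both use Lemma~\ref{Lemma_ODE_Sol}.(i) to get $u(y),u'(y)>0$ and hence $e_3>0$; and both ultimately reduce $e_4<0$ to the inequality $V_{r+q}(b)\le \mu/r < 1/\gamma_1(r)$, proved from $b\le y\le y_u$ and \eqref{yupper_def}, \eqref{lemma_useful_results_gamma}. The only genuine difference is the intermediate step for $e_4$: the paper expands the numerator of $e_4$ explicitly in exponentials, uses $b\le y$ to bound $e^{(y-b)(\gamma_1(r+q)-\gamma_2(r+q))}\ge 1$ together with $\gamma_1(r)-\gamma_1(r+q)<0$, and collapses everything to the factor $\gamma_1(r)V_{r+q}(b)-1$; you instead derive the first-order ODE $g'=\gamma_2(r)g+\tfrac{2q}{\sigma^2}u$ for $g=u'-\gamma_1(r)u$ and integrate it. Your route is a bit more conceptual and handles the boundary case $b=y$ uniformly (the paper treats $b<y$ and $b=y$ separately for $e_3$), while the paper's route is pure algebra with no auxiliary ODE; both arrive at the same decisive inequality.
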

\begin{proof}
We treat the following two cases separately. 

{\it Case $b^*_{r+q} \leq b <y$ for $e_3$}. 
We observe from the definition of $e_3(b,y)$ in \eqref{constants_complex} and the expression of $w(y;y)$ in \eqref{w_sol_double_def}
that 
$$
e_3(b,y) = \frac{\phi_{r}(y) w'(y;y) -\phi'_{r}(y) w(y;y)}{\psi'_{r}(y) \phi_{r}(y) -\psi_{r}(y) \phi'_{r}(y)}.
$$
Then, we observe that $w(\cdot;y)$ satisfies \eqref{FBP_medya} and solves the ODE \eqref{FBP_medyc} on $(b,y)$, hence it follows from Lemma \ref{Lemma_ODE_Sol}.(i) with $\rho=r+q$, $\ubar{x} = b$, $f(\ubar{x}) = V_{r+q}(b) > 0$ and $f'(\ubar{x}) = V_{r+q}'(b) = 1$, that $w(y;y), w'(y;y) > 0$ (where we also used the fact that $w(\cdot;y)$ is $C^1(\{y\})$). 
Combining this with the expressions \eqref{def:delta} of $\psi_r$ and $\phi_r$, we conclude that $e_3(b,y)$ is strictly positive in this case. 

{\it Case $b=y$ for $e_3$}. 
We can directly compute from its definition and $\gamma_2(r) < 0 < \gamma_1(r)$ that
\begin{align*}
e_3(y,y)&=e^{-\gamma_1(r)y}\frac{1-\gamma_2(r)V_{r+q}(y)}{\gamma_1(r)-\gamma_2(r)}>0.
\end{align*}

{\it Case $b^*_{r+q} \leq b \leq y$ for $e_4$}. 
Substituting the expressions of $e_1(b)$ and $e_2(b)$ from \eqref{constants_complex} in the definition of $e_4(b,y)$, as well as the functions $\phi_r,\psi_r,\phi_{r+q},\psi_{r+q}$ in the numerator, we observe that the latter is given by
\begin{align*}
&e^{\gamma_2(r+q)b+(\gamma_1(r)+\gamma_1(r+q))y}\, \big(1-\gamma_2(r+q)V_{r+q}(b) \big) \big( \gamma_1(r)-\gamma_1(r+q) \big)\\
&+ e^{\gamma_1(r+q)b+(\gamma_1(r)+\gamma_2(r+q))y} \big( \gamma_1(r+q)V_{r+q}(b)-1 \big) \big(\gamma_1(r)-\gamma_2(r+q) \big) 
=e^{\gamma_1(r+q)b+(\gamma_1(r)+\gamma_2(r+q))y} \, F(b) ,
\end{align*}
where 
\begin{align*}
F(b) 
&:= e^{(y-b)(\gamma_1(r+q) - \gamma_2(r+q))} \, \big( 1-\gamma_2(r+q)V_{r+q}(b) \big) \big( \gamma_1(r)-\gamma_1(r+q) \big) \\ 
&\quad + \big( \gamma_1(r+q) V_{r+q}(b) - 1 \big) \big( \gamma_1(r)-\gamma_2(r+q) \big) \\
&\leq \big( 1-\gamma_2(r+q)V_{r+q}(b) \big) \big(\gamma_1(r)-\gamma_1(r+q) \big) 
+ \big(\gamma_1(r+q)V_{r+q}(b)-1 \big) \big(\gamma_1(r)-\gamma_2(r+q) \big) \\
&= \big(\gamma_1(r) V_{r+q}(b) - 1 \big) \big(\gamma_1(r+q)-\gamma_2(r+q) \big)
< 0,
\end{align*}
where the former inequality follows from $b \leq y$, $\gamma_2(r+q) < 0 < \gamma_1(r+q)$ and $\gamma_1(r)-\gamma_1(r+q)<0$ (cf.~\eqref{gamma_def}), while the latter inequality follows from $\gamma_1(r)V_{r+q}(\ubar{b})-1<0$. 
To see this, observe that 
we have $b \geq b^*_{r+q}$, hence we have from \eqref{FBP_single_tresholdb}, \eqref{lemma_useful_results_Vb}, $b \leq y \leq y_u$, and \eqref{yupper_def}, that 
$$
V_{r+q}(b) 
= V_{r+q}(b^*_{r+q}) + b - b^*_{r+q}
= \frac{\mu}{r+q} + b - b^*_{r+q} 
\leq \frac{\mu}{r+q} + y_u - b^*_{r+q} 
= \frac{\mu}r ,   
$$
which implies in view of \eqref{lemma_useful_results_gamma} that \begin{align}\label{V<mur}
\gamma_1(r)V_{r+q}(\ubar{b})-1 \leq \gamma_1(r)\frac{\mu}{r}-1=\frac{\gamma_1(r)}{\gamma_2(r)}<0.
\end{align}
This completes the proof. 
\end{proof}

\begin{lemma} \label{lemma_steps2dim} 
The function $H(b,y)$ defined by \eqref{H_eq_midy} is well-defined and continuous for all $(b,y) \in [b^*_{r+q},y] \times [y_l,y_u]$, and 
satisfies the following properties:
\begin{enumerate}[\rm (i)]
\item $H(b^*_{r+q},y_l)=1$;
\item $H(y_u,y_u)=1$;
\item $y \mapsto H(b^*_{r+q},y)$ is strictly increasing for all $y \in (y_l,y_u)$;
\item $y \mapsto H(y,y)$ is strictly increasing for all $y \in (y_l,y_u)$.
\end{enumerate} 
\end{lemma}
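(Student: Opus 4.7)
First, well-definedness and continuity will follow from Lemma \ref{lemma_c_sign}, which yields $e_3(b,y)>0$ and $e_4(b,y)<0$ on the closed rectangle. Together with $\gamma_1(r)>0>\gamma_2(r)$, the exponents $-\gamma_2(r)/(\gamma_1(r)-\gamma_2(r))$ and $\gamma_1(r)/(\gamma_1(r)-\gamma_2(r))$ are strictly positive real numbers, so both power factors in \eqref{H_eq_midy} are well-defined; the explicit form of $e_3,e_4$ in \eqref{constants_complex} then makes $H$ continuous on the whole rectangle.

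For items (i) and (ii), the key observation I would exploit is that, as established in Step 2 of the proof of Lemma \ref{fbp2_ex}, the equation $H(b,y)=1$ is precisely the smooth-paste condition $w'(\bar{b};y)=1$ at the $\bar{b}$ given by \eqref{upperb_solv}, once the constants $E_1,\ldots,E_4$ have been chosen via the auxiliary boundary-value problem \eqref{medy_lineareq} with $\ubar{b}=b$. To obtain (i), I would invoke Corollary \ref{help_corr}, which delivers $w(b^*_{r+q};y_l)=V_{r+q}(b^*_{r+q})$ and $w'(b^*_{r+q};y_l)=1$; the subcritical solution at $y_l$ therefore already satisfies the initial conditions of the critical-regime auxiliary problem with $\ubar{b}=b^*_{r+q}$. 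Uniqueness of ODE solutions then forces the subcritical and critical candidates to coincide on $[b^*_{r+q},b^*(y_l)]$, and the subcritical smooth paste at $b^*(y_l)$ gives $\bar{b}^*(b^*_{r+q},y_l)=b^*(y_l)$ together with $H(b^*_{r+q},y_l)=1$. For (ii), I would use the identities $V_{r+q}(y_u)=\mu/r$ and $V'_{r+q}(y_u)=1$ from \eqref{yupper_def} and \eqref{lemma_useful_results_Vb}; combined with $\mu-rV_{r+q}(y_u)=0$, which yields vanishing curvature at $x=y_u^+$ through the ODE $\tfrac{\sigma^2}{2}w''+\mu w'-rw=0$, this shows that the smooth-paste triple (value, slope, curvature) is already met at $\bar{b}=y_u$, forcing $H(y_u,y_u)=1$.

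The main obstacle will be the strict monotonicity statements (iii) and (iv). My plan is to compute the logarithmic derivative $\partial_y\log H$ along each of the two curves. Along the vertical line $b=b^*_{r+q}$, the factors $e_1(b^*_{r+q})$ and $e_2(b^*_{r+q})$ in \eqref{constants_complex} are $y$-independent constants, so only $e_3(b^*_{r+q},\cdot)$ and $e_4(b^*_{r+q},\cdot)$ enter the derivative; substituting the exponential forms \eqref{def:delta} of $\psi_\rho,\phi_\rho$ and invoking the identities \eqref{lemma_useful_results_gamma}, I expect extensive cancellation to leave a single expression whose sign can be read off using the inequalities $\gamma_2(r+q)<\gamma_2(r)<0<\gamma_1(r)<\gamma_1(r+q)$. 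Along the diagonal $b=y$, I would leverage the closed form $e_3(y,y)=e^{-\gamma_1(r)y}(1-\gamma_2(r)V_{r+q}(y))/(\gamma_1(r)-\gamma_2(r))$ already derived in the proof of Lemma \ref{lemma_c_sign}, obtain an analogous closed form for $e_4(y,y)$ by the same direct expansion, and use $V_{r+q}(y)=y-b^*_{r+q}+\mu/(r+q)$ on $(y_l,y_u)\subset(b^*_{r+q},\infty)$ (recall $y_l>b^*_{r+q}$ from Lemma \ref{lemma_yl}) to reduce $H(y,y)$ to an elementary function of $y$ whose monotonicity is verified by a single derivative computation. The hardest step, in my estimation, is the algebraic bookkeeping required to exhibit $\partial_y H$ in manifestly signed form; as a safety net, the endpoint values from (i)-(ii) together with Step~3 of the proof of Lemma \ref{fbp2_ex} (which guarantees $H(b^*_{r+q},y)>1$ and $H(y,y)<1$ on $(y_l,y_u)$) provide an alternative route via one-sided derivative signs at $y_l$ and $y_u$, should the global derivative computation become unwieldy.
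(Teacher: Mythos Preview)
Your plan is essentially the paper's own proof. Well-definedness via Lemma \ref{lemma_c_sign}, part (i) via Corollary \ref{help_corr} and identification of the subcritical and critical free-boundary problems at $y=y_l$, and parts (iii)--(iv) via direct differentiation of the explicit expressions for $e_3,e_4$ (respectively of the closed form for $H(y,y)$ obtained from $e_3(y,y),e_4(y,y)$) are exactly what the paper does. For part (ii) your ODE argument (the initial data $w(y_u)=\mu/r$, $w'(y_u)=1$ force $w''(y_u{+})=0$, so $\bar b=y_u$ solves both equations in \eqref{eq_system_midy}) is a clean conceptual variant of the paper's route, which instead computes the closed form \eqref{hyy} for $H(y,y)$ and plugs in $V_{r+q}(y_u)=\mu/r$; both are valid and equally short.

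One genuine issue: your ``safety net'' is circular. The inequalities $H(b^*_{r+q},y)>1$ and $H(y,y)<1$ on $(y_l,y_u)$ that appear in Step~3 of the proof of Lemma \ref{fbp2_ex} are \emph{derived from} parts (i)--(iv) of the present lemma; they are not available to you here. So if the global derivative computation in (iii) or (iv) were to fail, you cannot fall back on Step~3 of Lemma \ref{fbp2_ex}. Fortunately the direct computations go through as you anticipate (the paper shows $\partial_y e_3(b^*_{r+q},y)>0$, $\partial_y e_4(b^*_{r+q},y)<0$ for (iii), and differentiates \eqref{hyy} using $V'_{r+q}(y)=1$ and $1-\gamma_1(r)V_{r+q}(y)>0$ for (iv)), so the fallback is not needed.
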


\begin{proof}
We firstly observe from Lemma \ref{lemma_c_sign} that $e_3(b,y)>0$ and $e_4(b,y)<0$ for all $b \in [b_{r+q}^*,y]$ $y\in [y_l,y_u]$, hence the function $H(\ubar{b},y)$ is continuous and well defined.
In what follows we then prove each part separately.
\vspace{1mm}
{\it Proof of part} (i).
Given the definition of $b^*_{r+q}$ in \eqref{value_classical}, which implies that $$\gamma_1^2({r+q}) \, \psi_{r+q}(b_{r+q}^*) = \gamma_2^2({r+q}) \, \phi_{r+q}(b_{r+q}^*)$$ combined with the continuity of $e_1(\cdot)$ and \eqref{lemma_useful_results_gamma}--\eqref{lemma_useful_results_Vb}, we have 
\begin{align} \label{e1br+q}
e_1(b^*_{r+q}) 
&= \frac{1 - \gamma_2(r+q) \frac{\mu}{r+q}}{\gamma_1(r+q) \psi_{r+q}(b^*_{r+q}) - \gamma_2(r+q) \psi_{r+q}(b^*_{r+q})} 
= \frac{1}{\gamma_1(r+q) \psi_{r+q}(b^*_{r+q}) - \gamma_2(r+q) \phi_{r+q}(b^*_{r+q})}
\end{align}
where the latter is the constant in the definition \eqref{value_classical} of $V_{r+q}$, and similarly $e_2(b^*_{r+q}) = - e_1(b^*_{r+q})$.
We also observe from Corollary \ref{help_corr} that $K_1(y_l) = - K_2(y_l) = e_1(b^*_{r+q}) = - e_2(b^*_{r+q})$ and $w(x;y_l) = V_{r+q}(x)$ for all $x \in [0,b^*_{r+q}]$. Combining all of these, we notice that the FBP \eqref{FBP_lowya}--\eqref{FBP_lowyd} with $y=y_l$ is equivalent to the FBP \eqref{FBP_medya}--\eqref{FBP_medye} with fixed choices $y=y_l$ and $\ubar{b}^*(y_l)=b^*_{r+q}$. 
This implies that $e_3(b^*_{r+q},y_l) = K_3(y_l)$, $e_4(b^*_{r+q},y_l) = K_4(y_l)$, and $\bar{b}^*(y_l) = b^*(y_l)$, where $b^*$ is defined by \eqref{def_deltay}. 
As a consequence, we get from the expression of $H$ in \eqref{H_eq_midy},  $e_3$ and $e_4$ from \eqref{constants_complex} and $\bar{b}^*$ from \eqref{upperb_solv} that
\begin{align*}
H(\ubar{b}^*(y_l),y_l)
&=e_3\big(\ubar{b}^*(y_l),y_l\big) \, \psi'_r\big(\bar{b}^*(y_l)\big) + e_4\big(\ubar{b}^*(y_l),y_l\big) \, \phi'_r\big(\bar{b}^*(y_l)\big) \\
&=e_3(b^*_{r+q},y_l) \, \psi'_r(\bar{b}^*(y_l)) + e_4(b^*_{r+q},y_l) \, \phi'_r(\bar{b}^*( y_l)) =K_3(y_l) \, \psi'_r\left(b^*(y_l)\right) + K_4(y_l) \, \phi'_r\left(b^*(y_l)\right)=1,
\end{align*}
where the last equality is due to $w'(b^*(y_l);y_l)=1$ from Lemma \ref{sol:FBP1}.

\vspace{1mm} 
{\it Proof of part} (ii). 
Take an arbitrary value $y \in (y_l,y_u)$ and observe from their definitions in \eqref{constants_complex} that 
\begin{align} \label{c_3c_4yy}
\begin{split}
e_3(y,y) 
&= \frac{(\phi_{r}-\phi'_{r}V_{r+q})(y)}{(\psi'_{r}\phi_{r}-\psi_{r}\phi'_{r})(y)} 
= \frac{1-\gamma_2(r)V_{r+q}(y)}{\gamma_1(r)-\gamma_2(r)} \, e^{-\gamma_1(r)y} > 0,\\ 
e_4(y,y)
&= \frac{(\psi'_{r}V_{r+q}-\psi_{r})(y)}{(\psi'_{r}\phi_{r}-\psi_{r}\phi'_{r})(y)} = \frac{\gamma_1(r)V_{r+q}(y)-1}{\gamma_1(r)-\gamma_2(r)} \, e^{-\gamma_2(r)y} < 0 ,
\end{split}
\end{align}
where the latter inequality holds true for all $y \in [y_l,y_u]$, since for $b^*_{r+q} < y_l \leq y \leq y_u$, we have from \eqref{FBP_single_tresholdb}, \eqref{lemma_useful_results_Vb} and \eqref{yupper_def}, that 
\begin{equation} \label{V(y)<mur}
V_{r+q}(y) \leq V_{r+q}(y_u) 
= V_{r+q}(b^*_{r+q}) + \frac{\mu}r - \frac{\mu}{r+q} 
= \frac{\mu}r < \frac{1}{\gamma_1(r)} . \end{equation}
Substituting these expressions into the definition \eqref{H_eq_midy} of $H$ yields 
\begin{align} \label{hyy}
\hspace{-3mm}
H(y,y) 
= \big( 1-\gamma_2(r)V_{r+q}(y) \big)^{\frac{-\gamma_2(r)}{\gamma_1(r)-\gamma_2(r)}} \big(1-\gamma_1(r)V_{r+q}(y) \big)^{\frac{\gamma_1(r)}{\gamma_1(r)-\gamma_2(r)}} \Big( -\frac{\gamma_2(r)}{\gamma_1(r)} \Big)^{\frac{\gamma_1(r)+\gamma_2(r)}{\gamma_1(r)-\gamma_2(r)}}.
\end{align}
Hence, using \eqref{lemma_useful_results_gamma} and \eqref{V(y)<mur}, we get that 
\begin{align*}
H(y_u,y_u) 
&= \left(1-\gamma_2(r)\frac{\mu}{r}\right)^{\frac{-\gamma_2(r)}{\gamma_1(r)-\gamma_2(r)}} \left(1-\gamma_1(r)\frac{\mu}{r}\right)^{\frac{\gamma_1(r)}{\gamma_1(r)-\gamma_2(r)}} \Big(-\frac{\gamma_2(r)}{\gamma_1(r)}\Big)^{\frac{\gamma_1(r)+\gamma_2(r)}{\gamma_1(r)-\gamma_2(r)}}\\
&= \Big(-\frac{\gamma_2(r}{\gamma_1(r)}\Big)^{\frac{-\gamma_2(r)}{\gamma_1(r)-\gamma_2(r)}} \Big(-\frac{\gamma_1(r)}{\gamma_2(r)}\Big)^{\frac{\gamma_1(r)}{\gamma_1(r)-\gamma_2(r)}} \Big(-\frac{\gamma_2(r)}{\gamma_1(r)}\Big)^{\frac{\gamma_1(r)+\gamma_2(r)}{\gamma_1(r)-\gamma_2(r)}} 
= 1.
\end{align*}

\vspace{1mm} 
{\it Proof of part} (iii). 
Substituting the expressions of $e_1(b^*_{r+q}) = - e_2(b^*_{r+q})$ from part (i) (cf.~\eqref{e1br+q}) into the expressions of $e_3$ and $e_4$ from \eqref{constants_complex}, we get 
\begin{align*}
e_3(b^*_{r+q},y) 
&= \frac{\big(\phi_r(\psi_{r+q}'-\phi_{r+q}')-\phi'_r(\psi_{r+q}-\phi_{r+q})\big)(y)}{(\psi'_r\phi_r-\phi'_r\psi_r)(y) \, (\psi'_{r+q}-\phi'_{r+q})(b^*_{r+q})},\\
&=e^{-\gamma_1(r)y} \frac{\gamma_1(r+q)e^{\gamma_1(r+q)y}-\gamma_2(r+q)e^{\gamma_2(r+q)y}-\gamma_2(r)\left(e^{\gamma_1(r+q)y}-e^{\gamma_2(r+q)y}\right)}{(\gamma_1(r)-\gamma_2(r)) \big(\gamma_1(r+q) e^{\gamma_1(r+q)b^*_{r+q}} - \gamma_2(r+q) e^{\gamma_2(r+q)b^*_{r+q}} \big)}\\
e_4(b^*_{r+q},y) 
&= \frac{\big(\psi'_r(\psi_{r+q}-\phi_{r+q}) - \psi_r(\psi'_{r+q}-\phi'_{r+q})\big)(y)}{(\psi'_r\phi_r-\phi'_r\psi_r)(y) \, (\psi'_{r+q}-\phi'_{r+q})(b^*_{r+q})}\\ 
&= e^{-\gamma_2(r)y} \frac{\gamma_1(r)\left(e^{\gamma_1(r+q)y}-e^{\gamma_2(r+q)y}\right) - \gamma_1(r+q) e^{\gamma_1(r+q)y} + \gamma_2(r+q)e^{\gamma_2(r+q)y}}{(\gamma_1(r)-\gamma_2(r)) \big(\gamma_1(r+q) e^{\gamma_1(r+q)b^*_{r+q}} - \gamma_2(r+q) e^{\gamma_2(r+q)b^*_{r+q}}\big)}.
\end{align*}
Then by differentiation, we get 
\begin{align*}
\frac{d}{dy} e_3(b^*_{r+q},y) 
&= -\frac{A \, e^{(\gamma_1(r+q)-\gamma_1(r))y} + B \, e^{(\gamma_2(r+q)-\gamma_1(r))y}}{C(b^*_{r+q})}, \\
\frac{d}{dy} e_4(b^*_{r+q},y) 
&= \frac{A \, e^{(\gamma_1(r+q)-\gamma_2(r))y} + B \, e^{(\gamma_2(r+q)-\gamma_2(r))y}}{C(b^*_{r+q})},
\end{align*}
where (note that the signs follow from \eqref{gamma_def})
\begin{align}\label{AB_defmidy}
\begin{split}
A &:= (\gamma_1(r+q)-\gamma_2(r))(\gamma_1(r)-\gamma_1(r+q)) < 0 ,\\ 
B &:= (\gamma_2(r+q)-\gamma_2(r))(\gamma_2(r+q)-\gamma_1(r)) > 0 ,\\
C(b^*_{r+q}) &:= (\gamma_1(r)-\gamma_2(r)) \big(\gamma_1(r+q) e^{\gamma_1(r+q)b^*_{r+q}} - \gamma_2(r+q) e^{\gamma_2(r+q)b^*_{r+q}} \big) > 0 .
\end{split}
\end{align}
Using the relationships in \eqref{lemma_useful_results_gamma}, we can further show that $A = -B$,
which implies that 
\begin{align} \label{de34}
\begin{split}
\frac{d}{dy} e_3(b^*_{r+q},y) 
&= \frac{B \big(e^{(\gamma_1(r+q)-\gamma_1(r))y} - e^{(\gamma_2(r+q)-\gamma_1(r))y}\big)}{C(b^*_{r+q})}
>0, \quad  y \in (y_l,y_u), \\
\frac{d}{dy} e_4(b^*_{r+q},y) 
&= \frac{A \big(e^{(\gamma_1(r+q)-\gamma_2(r))y} - e^{(\gamma_2(r+q)-\gamma_2(r))y}\big)}{C(b^*_{r+q})}
<0, \quad  y \in (y_l,y_u),
\end{split}
\end{align}
thanks to $\gamma_1(r+q)-\gamma_2(r)>\gamma_1(r+q)-\gamma_1(r)>0$ and $\gamma_2(r+q)-\gamma_1(r)<\gamma_2(r+q)-\gamma_2(r)<0$. 
Combining the inequalities in \eqref{de34} with the expression of $H(b^*_{r+q},y)$ in \eqref{H_eq_midy} yileds the desired result. 

\vspace{1mm} 
{\it Proof of part} (iv).
In order to obtain the monotonicity of $y \mapsto H(y,y)$ we derive the expression \eqref{hyy} and use the fact that $V'_{r+q}(y)=1$ for all $y > y_l > b^*_{r+q}$ thanks to \eqref{FBP_single_tresholdb}, and obtain 
\begin{align*}
\frac{d}{dy} H(y,y) 
&= H(y,y) \left(\frac{\gamma_2^2(r)}{(\gamma_1(r)-\gamma_2(r))(1-\gamma_2(r)V_{r+q}(y))}-\frac{\gamma_1^2(r)}{(\gamma_1(r)-\gamma_2(r))(1-\gamma_1(r)V_{r+q}(y))}\right)\\
&=  H(y,y) \, \frac{\gamma_1(r) \gamma_2(r) \big(\gamma_1(r)-\gamma_2(r) \big) V_{r+q}(y) + \gamma_2^2(r) - \gamma_1^2(r)}{\big(\gamma_1(r)-\gamma_2(r) \big) \big(1 - \gamma_2(r) V_{r+q}(y) \big) \big(1 - \gamma_1(r) V_{r+q}(y) \big)} \\
&= H(y,y) \, \frac{-\gamma_2(r)\left(1 - \gamma_1(r)V_{r+q}(y)\right) -\gamma_1(r)}{(1-\gamma_2(r)V_{r+q}(y))(1-\gamma_1(r)V_{r+q}(y))} > 0,
\end{align*}
where the positivity follows from the expression for $H(y,y)$ in \eqref{hyy} together with \eqref{V(y)<mur} and \eqref{lemma_useful_results_gamma} which imply that $1 - \gamma_1(r)V_{r+q}(y) > 1 - \gamma_1(r)V_{r+q}(y_u) = -\gamma_1(r)/\gamma_2(r) > 0$, for all $y \in (y_l, y_u)$. 
\end{proof}

\begin{lemma} \label{lemma_steps2dim_2}
Recall the functions $e_3$ and $e_4$ from \eqref{constants_complex} and define 
$$
\Lambda(b,y):=-\frac{e_4(b,y)}{e_3(b,y)} 
\quad \text{and} \quad 
L(y):=\frac{1}{\gamma_1(r)-\gamma_2(r)} \log\Big(\frac{\gamma_2^2(r)}{\gamma_1^2(r)}\Lambda(y,y)\Big) - y, 
\quad 
$$
for all $(b,y) \in [b_{r+q}^*,y] \times [y_l,y_u]$.
Then, we have
\begin{enumerate}[\rm (i)]
\item For any $y\in [y_l,y_u]$, the function $b \mapsto \Lambda(b,y)$ is strictly decreasing on $(b_{r+q}^*,y)$;

\item For any $y\in [y_l,y_u]$, the function $b \mapsto e_3(b,y)$ is strictly decreasing on $(b_{r+q}^*,y)$; 

\item The function $y \mapsto L(y)$ is strictly decreasing on $(y_l,y_u)$; 

\item $L(y_u)=0$.
\end{enumerate}
\end{lemma}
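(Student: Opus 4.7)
Parts (iii) and (iv) are algebraic consequences of the explicit formulas for $e_3(y,y)$ and $e_4(y,y)$ already established in \eqref{c_3c_4yy}, which yield
\begin{align*}
\Lambda(y,y) \;=\; -\frac{e_4(y,y)}{e_3(y,y)} \;=\; \frac{1 - \gamma_1(r) V_{r+q}(y)}{1 - \gamma_2(r) V_{r+q}(y)} \, e^{(\gamma_1(r)-\gamma_2(r))y}.
\end{align*}
For (iv), I would substitute $V_{r+q}(y_u) = \mu/r$ from \eqref{lemma_useful_results_Vb}; using \eqref{lemma_useful_results_gamma} (equivalently $\mu/r = 1/\gamma_1(r) + 1/\gamma_2(r)$) gives $1 - \gamma_1(r) V_{r+q}(y_u) = -\gamma_1(r)/\gamma_2(r)$ and $1 - \gamma_2(r) V_{r+q}(y_u) = -\gamma_2(r)/\gamma_1(r)$, whence $(\gamma_2^2(r)/\gamma_1^2(r))\,\Lambda(y_u,y_u) = e^{(\gamma_1(r)-\gamma_2(r))y_u}$ and $L(y_u)=0$. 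For (iii), since $(y_l,y_u) \subset (b^*_{r+q}, y_u)$ by Lemma~\ref{lemma_yl}, one has $V'_{r+q}(y) = 1$ by \eqref{FBP_single_tresholdb}, and direct differentiation of $L(y)$ yields
\begin{align*}
L'(y) \;=\; \frac{-V'_{r+q}(y)}{(1 - \gamma_1(r) V_{r+q}(y))(1 - \gamma_2(r) V_{r+q}(y))} \;<\; 0,
\end{align*}
since the second factor in the denominator exceeds $1$ (as $\gamma_2(r)<0$) and the first is positive because $V_{r+q}(y)\leq \mu/r < 1/\gamma_1(r)$ (by \eqref{lemma_useful_results_gamma}).

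For part (ii), the plan is to view $w(\,\cdot\,;b)$ as the solution of \eqref{FBP_medyc} (with discount $r+q$) on $[b,y]$, subject to $w(b;b)=V_{r+q}(b)$ and $w'(b;b)=1$, so that $w(y;b) = e_1(b)\psi_{r+q}(y) + e_2(b)\phi_{r+q}(y)$. Matching with the representation on $[y,\bar{b}^*(y))$ via Cramer's rule gives
\begin{align*}
e_3(b,y) \;=\; \frac{\phi_r(y) w'(y;b) - \phi'_r(y) w(y;b)}{\psi'_r(y)\phi_r(y) - \psi_r(y)\phi'_r(y)}.
\end{align*}
Set $u(x):=\partial_b w(x;b)$: differentiating the two initial conditions in $b$ and using $V'_{r+q}(b)=1$, $V''_{r+q}(b)=0$ (valid in the critical regime $b>b^*_{r+q}$), one obtains that $u$ solves the same homogeneous linear ODE with $u(b)=0$ and $u'(b) = -w''(b;b) = -(2/\sigma^2)\bigl((r+q)V_{r+q}(b)-\mu\bigr) < 0$, where the final inequality uses $V_{r+q}(b) > V_{r+q}(b^*_{r+q}) = \mu/(r+q)$. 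Writing $u$ explicitly as a linear combination of $e^{\gamma_1(r+q)x}$ and $e^{\gamma_2(r+q)x}$ matching these initial data, I would verify directly that $u(y)<0$ and $u'(y)<0$ for all $y>b$. Differentiating the above formula for $e_3$ in $b$ then yields
\begin{align*}
\partial_b e_3(b,y) \;=\; \frac{\phi_r(y) u'(y) - \phi'_r(y) u(y)}{\psi'_r(y)\phi_r(y) - \psi_r(y)\phi'_r(y)} \;<\; 0,
\end{align*}
because $\phi_r(y)>0$, $\phi'_r(y)<0$ and the denominator is positive.

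For part (i), a naive differentiation of $\Lambda = -e_4/e_3$ gives a sign-indeterminate expression, since by analogous arguments both $e_3$ and $-e_4$ are decreasing in $b$. The key step is to rewrite
\begin{align*}
\Lambda(b,y) \;=\; e^{(\gamma_1(r)-\gamma_2(r))y}\cdot\frac{w'(y;b) - \gamma_1(r) w(y;b)}{w'(y;b) - \gamma_2(r) w(y;b)} \;=:\; e^{(\gamma_1(r)-\gamma_2(r))y}\,R(b),
\end{align*}
after which a short algebraic simplification gives
\begin{align*}
R'(b) \;=\; \frac{(\gamma_1(r)-\gamma_2(r))\,W(y)}{(w'(y;b) - \gamma_2(r) w(y;b))^2}, \qquad W(x) := w(x;b)\,u'(x) - w'(x;b)\,u(x).
\end{align*}
Since $w(\,\cdot\,;b)$ and $u$ both satisfy the linear second-order ODE $\tfrac{\sigma^2}{2}f''+\mu f'-(r+q)f=0$, Abel's identity yields $W(x) = W(b)\,e^{-(2\mu/\sigma^2)(x-b)}$, and $W(b) = V_{r+q}(b)\,u'(b) - 1\cdot 0 < 0$ by the previous paragraph. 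Hence $W(y)<0$, so $R'(b)<0$, which proves strict monotonicity of $\Lambda(\,\cdot\,,y)$ on $(b^*_{r+q},y)$. The main obstacle is precisely spotting this Wronskian structure and invoking Abel's identity to transfer the sign information from $x=b$ (where $u(b)=0$ makes $W(b)$ computable) to the fixed evaluation point $y$; parts (ii)--(iv) rely only on routine manipulation of the explicit formulas already developed in the paper.
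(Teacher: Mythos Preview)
Your proposal is correct. Parts (iii) and (iv) follow the paper's route exactly: rewrite $L(y)$ via \eqref{c_3c_4yy} so that the $e^{(\gamma_1(r)-\gamma_2(r))y}$ factor cancels the subtracted $y$, then differentiate and use $V_{r+q}(y)\le \mu/r<1/\gamma_1(r)$ for the sign. Your combined expression $L'(y)=-\big[(1-\gamma_1 V)(1-\gamma_2 V)\big]^{-1}$ is precisely what the paper obtains after one more line of algebra.

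For parts (i) and (ii), however, your argument is genuinely different from the paper's. The paper proves both by brute-force expansion: it writes $e_3$, $e_4$, $\Lambda$ explicitly in terms of $\psi_{r+q},\phi_{r+q},\psi_r,\phi_r$, differentiates in $b$, and simplifies the resulting products (using identities such as $\tilde\psi\tilde\phi-\delta_1\delta_2<0$ and $V_{r+q}(b)>\mu/(r+q)$) until a sign emerges. Your approach instead interprets $w(\,\cdot\,;b)$ as the ODE solution with $b$-dependent initial data, derives the variational equation for $u=\partial_b w$ with $u(b)=0$, $u'(b)=-w''(b;b)<0$, and then reads off $\partial_b e_3<0$ from $u(y),u'(y)<0$. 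For $\Lambda$ you recognise that $\partial_b R$ is governed by the Wronskian $W=wu'-w'u$ of two solutions of the same linear ODE, so Abel's identity transports the sign of $W(b)=V_{r+q}(b)\,u'(b)<0$ to $W(y)$. This is cleaner and more structural than the paper's computation: it explains \emph{why} the sign comes out right (the variational solution $u$ starts tangent to zero with negative slope, and the Wronskian cannot change sign), whereas the paper's algebra merely verifies it. The paper's approach, on the other hand, is entirely self-contained and needs nothing beyond the explicit formulas already on the page.
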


\begin{proof}
We prove each part separately.

\vspace{1mm}
{\it Proof of part} (i).
It follows from \eqref{constants_complex} that $\Lambda$ takes the form
\begin{align*}
\Lambda(b,y) 
= - \frac{(\phi_{r+q}-\phi'_{r+q}V_{r+q})(b) \, \tilde{\psi}(y) + (\psi'_{r+q}V_{r+q}-\psi_{r+q})(b) \, \delta_1(y)}{(\phi_{r+q}-\phi'_{r+q}V_{r+q})(b) \, \delta_2(y) + (\psi'_{r+q}V_{r+q}-\psi_{r+q})(b) \, \tilde{\phi}(y)} =: - \frac{N(b,y)}{M(b,y)},
\end{align*}
where $N$ and $M$ denote the numerator and denominator of the last expression, respectively, and
\begin{align}\label{help_expression}
\begin{split}
\tilde{\psi}:=\psi'_r\psi_{r+q}-\psi_r\psi'_{r+q},\qquad&
\tilde{\phi}:=\phi_r\phi'_{r+q}-\phi'_r\phi_{r+q}\\
\delta_1:=\psi'_r\phi_{r+q}-\psi_r\phi'_{r+q}, \qquad&
\delta_2:=\phi_r\psi'_{r+q}-\phi'_r\psi_{r+q}.
\end{split}
\end{align}
Taking the derivative of $\Lambda$ with respect to $b$, we get 
\begin{align*}
M(b,y)^2 \, \frac{d}{db} \Lambda(b,y) 
&= -M(b,y) \frac{d}{db} N(b,y) + N(b,y) \frac{d}{db} M(b,y) \\
&= (\tilde{\psi}\tilde{\phi}-\delta_1\delta_2)(y) \, V_{r+q}(b) \left(\phi''_{r+q}(\psi'_{r+q}V_{r+q}-\psi_{r+q})+\psi''_{r+q}(\phi_{r+q}-\phi'_{r+q}V_{r+q})\right)(b),
\end{align*}
where in the second equality we used the properties that $V'_{r+q}(b)=1$ for $b\geq b_{r+q}^*$. 
In view of \eqref{help_expression} and \eqref{gamma_def}, we have  
\begin{align*}
(\tilde{\psi}\tilde{\phi}-\delta_1\delta_2)(y)=
-(\gamma_1(r+q) -\gamma_2(r+q))(\gamma_1(r)-\gamma_2(r))e^{(\gamma_1(r)+ \gamma_2(r)+\gamma_1(r+q) + \gamma_2(r+q))y} < 0.
\end{align*}
We also have 
\begin{align*}
&\left(\phi''_{r+q}(\psi'_{r+q}V_{r+q}-\psi_{r+q})+\psi''_{r+q}(\phi_{r+q}-\phi'_{r+q}V_{r+q})\right)(b)\\
&=e^{(\gamma_1(r+q) + \gamma_2(r+q))y}\left(\gamma_2^2(r+q)(\gamma_1(r+q)V_{r+q}(b)-1)+\gamma_1^2(r+q)(1-\gamma_2(r+q)V_{r+q}(b))\right) > 0,
\end{align*}
whose positivity results from \eqref{lemma_useful_results_Vb}, \eqref{value_classical} and 
\begin{align} \label{help_Vb} 
V_{r+q}(b)>\frac{\gamma_2^2(r+q)-\gamma_1^2(r+q)}{\gamma_2^2(r+q)\gamma_1(r+q)-\gamma_1^2(r+q)\gamma_2(r+q)}=V_{r+q}(b_{r+q}^*), \quad b > b^*_{r+q} .
\end{align}
Using the signs of these terms we conclude that $\frac{d}{db} \Lambda(b,y) <0$ for all $b > b_{r+q}^*$.

\vspace{1mm}
{\it Proof of part} (ii).
Using the notation in \eqref{help_expression}, we firstly express $e_3$ in the form
\begin{align*}
e_3(b,y) 
= \frac{(\phi_{r+q}-\phi'_{r+q}V_{r+q})(b) \, \delta_2(y) + (\psi'_{r+q}V_{r+q}-\psi_{r+q})(b) \, \tilde\phi(y)}{(\psi'_{r+q}\phi_{r+q}-\phi'_{r+q}\psi_{r+q})(b) \, (\psi'_{r}\phi_r-\phi'_{r}\psi_{r})(y)}.
\end{align*}
Then, using that $V'_{r+q}(b)=1$ for $b\geq b_{r+q}^*$, we take the derivative of $e_3$ with respect to $b$ and get
\begin{align*}
\frac{d}{db}e_3(b,y) 
&= \frac{-\delta_2(y)(\phi_{r+q}''V_{r+q}\delta_3+(\phi_{r+q}-\phi'_{r+q}V_{r+q})\delta_3')(b)+\tilde\phi(y)(\psi_{r+q}''V_{r+q}\delta_3 -(\psi'_{r+q}V_{r+q}-\psi_{r+q})\delta_3')(b)}{\delta_3(b)^2(\psi'_{r}\phi_r-\phi'_{r}\psi_{r})(y)} 
\end{align*}
where we define 
\begin{align*}
\delta_3 &:= \psi'_{r+q}\phi_{r+q}-\phi'_{r+q}\psi_{r+q}. 
\end{align*}
The denominator is straightforwardly positive, hence in what follows we focus on the terms of the numerator.
By substituting the definitions of $\psi,\phi$ from \eqref{def:delta} and $\delta_3$ from above, we have that
\begin{align*}
(\phi_{r+q}'' V_{r+q} \delta_3 + (\phi_{r+q} - \phi'_{r+q} V_{r+q}) \delta_3')(b) 
&= \phi_{r+q}^2(b) \, \psi_{r+q}(b) \, \widetilde{V}(b)\\
(\psi_{r+q}''V_{r+q}\delta_3 - (\psi'_{r+q}V_{r+q}-\psi_{r+q})\delta_3')(b) 
&= \phi_{r+q}(b) \, \psi_{r+q}^2(b) \, \widetilde{V}(b),
\end{align*}
where
\begin{align*}
\widetilde{V}(b):=\gamma_1^2(r+q)-\gamma_2^2(r+q)+\gamma_1(r+q)\gamma_2(r+q)(\gamma_2(r+q)-\gamma_1(r+q))V_{r+q}(b) > 0,
\end{align*}
whose positivity follows from \eqref{help_Vb}.
Using the above expressions, we thus have \begin{align*}
\delta_3(b)^2 \, (\psi'_{r}\phi_r-\phi'_{r}\psi_{r})(y) \, \frac{d}{db}e_3(b,y) 
= \psi_{r+q}(b) \, \phi_{r+q}(b) \, \big(\tilde\phi(y)\psi_{r+q}(b)-\delta_2(y)\phi_{r+q}(b) \big) \, \tilde{V}(b) < 0,
\end{align*}
whose negativity follows from the inequalities 
\begin{align*}
\tilde{\phi}(y) &= \phi_r(y) \, \phi_{r+q}(y) \, (\gamma_2(r+q)-\gamma_2(r)) < 0 ,\\
\delta_2(y) &= \phi_r(y) \, \psi_{r+q}(y) \, (\gamma_1(r+q) - \gamma_2(r)) > 0 .
\end{align*}
 
\vspace{1mm}
{\it Proof of part} (iii).
Using the definitions \eqref{constants_complex} of $e_3$ and $e_4$, we express $L$ in the form  
\begin{align}
L(y)
&= \frac{1}{\gamma_1(r)-\gamma_2(r)} \,  \log\left(e^{(\gamma_1(r)-\gamma_2(r))y} \, \frac{\gamma_2^2(r) \big(1-\gamma_1(r)V_{r+q}(y) \big)}{\gamma_1^2(r) \big(1-\gamma_2(r)V_{r+q}(y) \big)}\right) - y \label{L_rep}\\
&=\frac{\log\left(\gamma_2^2(r) \big(1-\gamma_1(r)V_{r+q}(y) \big)\right)-\log\left(\gamma_1^2(r) \big(1-\gamma_2(r)V_{r+q}(y) \big)\right)}{\gamma_1(r)-\gamma_2(r)}\notag
\end{align}
Using $V'_{r+q}(y)=1$, since $y>y_l>b_{r+q}^*$ by Lemma \ref{lemma_yl}, we get
\begin{align*}
L'(y)=\frac{1}{\gamma_1(r)-\gamma_2(r)}\left(\frac{-\gamma_1(r)}{1-\gamma_1(r)V_{r+q}(y)}+\frac{\gamma_2(r)}{1-\gamma_2(r)V_{r+q}(y)}\right)<0,
\end{align*}
due to \eqref{FBP_single_tresholdb}, \eqref{lemma_useful_results_gamma}, \eqref{lemma_useful_results_Vb} and \eqref{yupper_def}, which imply that $1 - \gamma_1(r)V_{r+q}(y) > 1 - \gamma_1(r)V_{r+q}(y_u) = -\gamma_1(r)/\gamma_2(r) > 0$, for all $y \in (y_l, y_u)$. 

\vspace{1mm}
{\it Proof of part} (iv). 
Recalling that $V_{r+q}(y_u)=\mu/r$ thanks to \eqref{yupper_def} and the fact that $b^*_{r+q} < y_l < y_u$, and using  \eqref{lemma_useful_results_gamma}, we immediately see that 
$\gamma_2^2(r) \, (1-\gamma_1(r)V_{r+q}(y_u)) = \gamma_1^2(r) \, (1-\gamma_2(r)V_{r+q}(y_u))$. 
Substituting this into \eqref{L_rep} yields that $L(y_u)=0$.
\end{proof}

\section{Technical results}
\label{sec:auxiliary}

In this Section, we include useful technical results that are used throughout the paper.

\begin{lemma}\label{Lemma_ODE_Sol}
Let $\ubar{x} \geq 0$ and suppose that $f\in \mathcal{C}^2(\ubar{x},\infty)$ solves the initial value problem
\begin{align*}
&\frac{\sigma^2}{2}f''(x)+\mu f'(x)-\rho f(x)=0 \quad \text{for} \quad x\in (\ubar{x},\infty) , 
\quad f(\ubar{x}) = z_0  \geq 0, 
\quad f'(\ubar{x}) = z_1 >0.
\end{align*}
Then, we have that:
\begin{enumerate}[(i)]
\item $f(\cdot)$ is strictly increasing on $(\ubar{x},\infty)$;
\item If $f$ has an inflection point $x^* \in (\ubar{x},\infty)$, then $f$ can only switch from concave on $(\ubar{x},x^*)$ to convex on $(x^*,\infty)$.
\end{enumerate}
\end{lemma}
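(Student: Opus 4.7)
My plan is to prove (i) by a one-point contradiction at the first zero of $f'$, and (ii) by combining a local switch at $x^*$ derived from the differentiated ODE with a uniqueness-of-zero argument for $f''$ coming from the structure of solutions to the underlying linear ODE.

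For (i), I would set $x_1 := \inf\{x > \ubar{x} : f'(x) = 0\}$ and argue by contradiction that $x_1 = \infty$. Assume $x_1 < \infty$. Continuity and $f'(\ubar{x}) = z_1 > 0$ give $f'(x) > 0$ on $[\ubar{x}, x_1)$ and $f'(x_1) = 0$, which forces $f(x_1) > f(\ubar{x}) = z_0 \geq 0$, hence $f(x_1) > 0$. Evaluating the ODE at $x_1$ yields $\tfrac{\sigma^2}{2} f''(x_1) = \rho f(x_1) > 0$. On the other hand, since $f'$ decreases from strictly positive values to $0$ as $x \uparrow x_1$, continuity of $f''$ forces $f''(x_1) \leq 0$, giving the desired contradiction.

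For the local part of (ii), I would differentiate the ODE to obtain $\tfrac{\sigma^2}{2} f'''(x) + \mu f''(x) - \rho f'(x) = 0$. Evaluating at $x^*$, where $f''(x^*) = 0$ by hypothesis, gives $f'''(x^*) = (2\rho/\sigma^2) f'(x^*) > 0$ by part (i). Hence $f''$ is strictly increasing through $x^*$, so $f'' < 0$ just to the left and $f'' > 0$ just to the right of $x^*$.

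The main obstacle is upgrading this local switch to the global concavity/convexity statement. My plan is to observe that $g := f''$ itself solves $\tfrac{\sigma^2}{2} g'' + \mu g' - \rho g = 0$ (obtained by differentiating the original ODE twice), so $g$ has the explicit form $g(x) = A e^{\gamma_1 x} + B e^{\gamma_2 x}$ with $\gamma_2 < 0 < \gamma_1$ the roots of the characteristic equation. Such a function admits \emph{at most one} zero, because $A e^{\gamma_1 x} = -B e^{\gamma_2 x}$ can be solved for $x$ at most once (it has no solution if $A, B$ have the same sign or if one of them vanishes, and a unique solution if they have opposite signs by strict monotonicity of $e^{(\gamma_2 - \gamma_1) x}$). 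The trivial case $g \equiv 0$ is ruled out because it would force $f$ to be affine, which together with the ODE forces $f \equiv 0$, contradicting $f'(\ubar{x}) = z_1 > 0$. Hence $x^*$ is the unique zero of $f''$, and combining this with continuity of $f''$ and the local signs already established yields $f'' < 0$ throughout $(\ubar{x}, x^*)$ and $f'' > 0$ throughout $(x^*, \infty)$, completing the proof.
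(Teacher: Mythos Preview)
Your proof is correct and follows essentially the same approach as the paper. Part (i) is identical in structure (first zero of $f'$ leads to $f''>0$ from the ODE, contradicting minimality), and for part (ii) both you and the paper differentiate the ODE and use part (i) to get $f'''(x^*)>0$, hence $f''$ crosses from negative to positive at any zero. The only minor difference is in the globalization step: the paper leaves implicit that ``every zero of $f''$ is an upward crossing'' already forces $f''$ to have at most one zero on $(\ubar{x},\infty)$, whereas you obtain uniqueness by writing $g=f''$ explicitly as $A e^{\gamma_1 x}+B e^{\gamma_2 x}$ and arguing directly. Both routes are standard and equally short; the paper's is marginally slicker since it reuses the local crossing argument rather than invoking the explicit form, but your version has the virtue of being fully spelled out.
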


\begin{proof}
This proof is a slight variation of a well known result \cite[Lemma 4.1.]{shreve1984optimal} (see also \cite[Lemma 2.4]{erikfinetti2023}), and is included here for completeness. 

\vspace{1mm}
{\it Proof of part} (i). 
Define  
$$
\tilde{x} := \inf \{ x \in (\ubar{x},\infty) \,|\, f'(x)=0\} > 0,
$$
where its positivity follows from $f'(\ubar x)>0$. 
Assume (aiming for a contradiction) that $\tilde{x} < \infty$. 
Then, we have $f'(x)>0$ for $x\in (\ubar x,\tilde x)$, which implies together with $f(\ubar{x})\geq 0$ that $f(\tilde x) > 0$. 
Hence, it follows from the ODE that
$\tfrac{\sigma^2}{2}f''(\tilde x)=\rho f(\tilde x)>0$, 
which implies that $f'(\tilde x-\epsilon)<0$ for sufficiently small $\epsilon > 0$. 
This is a contradiction to $f'(x)>0$ for $x\in (\ubar x,\tilde x)$, thus $\tilde{x}=\infty$ and consequently  $f'(x)>0$ for $x\in (\ubar x,\infty)$. 

\vspace{1mm}
{\it Proof of part} (ii). 
We first note that the unique solution to the ODE with the given boundary conditions is given by
\begin{align*}
f(x)=\frac{z_1-\gamma_2(\rho)z_0}{(\gamma_1(\rho)-\gamma_2(\rho))e^{\gamma_1(\rho)\ubar{x}}}e^{\gamma_1(\rho)x}-\frac{z_1-\gamma_1(\rho)z_0}{(\gamma_1(\rho)-\gamma_2(\rho))e^{\gamma_2(\rho)\ubar{x}}}e^{\gamma_2(\rho)x},
\quad \text{for $\gamma_1(\rho), \gamma_2(\rho)$ defined in \eqref{gamma_def},}
\end{align*}
and observe that $f\in\mathcal{C}^3(\ubar x,\infty)$. 
Suppose that $\tilde x \in (\ubar{x},\infty)$ is any point satisfying $f''(\tilde x)=0$. 
Then, taking the derivative of the ODE and using part (i), we get 
$
\tfrac{\sigma^2}{2}f'''(\tilde x) = \rho f'(\tilde x)>0 ,
$
which implies that $f''(x)$ is increasing at $x=\tilde x$. 
This further implies, for sufficiently small $\epsilon > 0$, that 
$$
f''(x) \begin{cases} 
<0 , \quad x \in (\tilde x-\epsilon, \tilde x), \\
>0 , \quad x \in (\tilde x, \tilde x+\epsilon) 
\end{cases}
\quad \Leftrightarrow \quad 
f(x) \text{ is } \begin{cases} 
\text{concave for } x \in (\tilde x-\epsilon, \tilde x), \\
\text{convex for }  x \in (\tilde x, \tilde x+\epsilon). 
\end{cases}
$$
which proves the desired result.
\end{proof}
\bibliographystyle{abbrv}
\bibliography{OmegaDiv}

\end{document}